\documentclass[11pt]{amsart}

\usepackage{amscd, amssymb}
\usepackage{enumerate}
\usepackage[all]{xypic}
\usepackage{multirow}
\usepackage{hhline}
\usepackage{verbatim}
\usepackage{mathdots}
\usepackage{comment}
\usepackage{tikz-cd}
\usepackage[titletoc]{appendix}
\usepackage{graphicx}
\usepackage{mathtools}
\usepackage{extpfeil}
\usepackage{thmtools}
\usepackage{float}

\usepackage{amsthm}
\usepackage{leftidx}
\usepackage{mathabx }
\usepackage{amsfonts}
\usepackage{bbm}

\usetikzlibrary{chains}

\allowdisplaybreaks


\makeatletter

%


\newcommand{\GL}{\operatorname{GL}}

\newcommand{\SO}{\operatorname{SO}}

\newcommand{\GSp}{\operatorname{GSp}}
\newcommand{\Spin}{\operatorname{Spin}}
\newcommand{\Pin}{\operatorname{Pin}}
\newcommand{\GSpin}{\operatorname{GSpin}}
\newcommand{\GPin}{\operatorname{GPin}}
\newcommand{\GSO}{\operatorname{GSO}}

\newcommand{\UU}{\operatorname{U}}
\newcommand{\OO}{\operatorname{O}}
\newcommand{\tr}{\operatorname{tr}}


\newcommand{\la}{\langle}
\newcommand{\ra}{\rangle}
\newcommand{\lla}{\langle\!\langle}
\newcommand{\rra}{\rangle\!\rangle}

\newcommand{\qand}{\quad\text{and}\quad}

\newcommand{\GPint}{\widetilde{\operatorname{GPin}}}
\newcommand{\GSpint}{\widetilde{\operatorname{GSpin}}}

\newcommand{\OOt}{\widetilde{\operatorname{O}}}
\newcommand{\OOpt}{\widetilde{\operatorname{O}'}}

\newcommand{\SOt}{\widetilde{\operatorname{SO}}}
\newcommand{\GLt}{\widetilde{\operatorname{GL}}}
\newcommand{\Ut}{\widetilde{\operatorname{U}}}
\newcommand{\Gt}{\widetilde{G}}

\newcommand{\Gb}{\overline{G}}

\newcommand{\Fcal}{\mathcal{F}}
\newcommand{\Scal}{\mathcal{S}}
\newcommand{\Ucal}{\mathcal{U}}
\newcommand{\st}{\;:\;}

\newcommand{\CC}{\mathbb{C}}
\newcommand{\C}{\mathbb{C}}

\newcommand{\sign}{\operatorname{sign}}
\newcommand{\one}{\mathbbm{1}}


\theoremstyle{plain}
\newtheorem{theorem}{Theorem}[section]
\newtheorem{cor}[theorem]{Corollary}
\newtheorem{prop}[theorem]{Proposition}
\newtheorem{lemma}[theorem]{Lemma}
\newtheorem{rem}[theorem]{Remark}

\newtheorem{defn}[theorem]{Definition}

\numberwithin{equation}{section}

\DeclareMathOperator{\disc}{disc}
\DeclareMathOperator{\Ind}{Ind}
\DeclareMathOperator{\Span}{Span}

\DeclareMathOperator{\Hom}{Hom}

\DeclareMathOperator{\Aut}{Aut}
\DeclareMathOperator{\Irr}{Irr}

\tikzset{
  symbol/.style={
    draw=none,
    every to/.append style={
      edge node={node [sloped, allow upside down, auto=false]{$#1$}}}
  }
}

\author{Melissa Emory }
\address{Department of Mathematics, Oklahoma State University, 401 MSCS, Stillwater OK 74075, USA}
\email{melissa.emory@okstate.edu}

\author{Shuichiro Takeda}
\address{Department of Mathematics, Graduate School of Science, Osaka University; Toyonaka, Osaka 560-0043, JAPAN }
\email{takedas@math.sci.osaka-u.ac.jp}

\setlength{\oddsidemargin}{0.2in} \setlength{\evensidemargin}{0.2in}
\setlength{\textwidth}{5.8in}

\title[Contragredients and Multiplicity One for GSpin groups]{Contragredients and a Multiplicity One Theorem\\ for General Spin Groups}

\makeatother

\begin{document}
\maketitle

\begin{abstract}
Each orthogonal group $\OO(n)$ has a nontrivial $\GL(1)$-extension, which we call $\GPin(n)$. The identity component of $\GPin(n)$ is the more familiar $\GSpin(n)$, the general Spin group. We prove that the restriction to $\GPin(n-1)$ of an irreducible admissible representation of $\GPin(n)$ over a nonarchimedean local field of characteristic zero is multiplicity free and also prove the analogous theorem for $\GSpin(n)$. Our proof uses the method of Aizenbud, Gourevitch, Rallis and Schiffman, who proved the analogous theorem for $\OO(n)$, and of Waldspurger, who proved that for $\SO(n)$.

We also give an explicit description of the contragredient of an irreducible admissible representation of $\GPin(n)$ and $\GSpin(n)$, which is needed to apply their method to our situations.
\end{abstract}

\section{Introduction}

Let $G$ be a reductive group over a nonarchimedean local field $F$ of characteristic zero, and $H\subseteq G$ a reductive subgroup. Let $\pi$ and $\tau$ be irreducible admissible representations of $G$ and $H$, respectively. One of the important questions in representation theory is to know ``how may times" $\tau$ appears as a quotient of $\pi$, when $\pi$ is restricted to $H$, namely to know the dimension
\[
\dim_{\C}\Hom_H(\pi,\, \tau),
\]
where, strictly speaking, $\pi$ in the Hom space is $\pi|_H$. There seem to be two separate questions: whether the dimension is nonzero (nonvanishing question) such as in the Gan-Gross-Prasad conjecture (\cite{GGP}), and whether the dimension is at most one (multiplicity-at-most-one or multiplicity-free question). The latter question is formulated as
\[
\dim_{\C}\Hom_H(\pi,\,\tau)\leq 1,
\]
and we call an assertion of this form a ``multiplicity-at-most-one theorem".

In their celebrated paper \cite{AGRS}, Aizenbud, Gourevitch, Rallis and Schiffmann proved the multiplicity-at-most-one theorem for the pairs
\[
(G, H)=(\GL(n), \GL(n-1)),\ (U(n), U(n-1))\qand (\OO(n), \OO(n-1)),
\]
and later Waldspurger \cite{Wal12} proved the case for
\[
(G, H)=(\SO(n), \SO(n-1)).
\]
Also the Archimedean case was proven by Sun and Zhu in \cite{Sun_Zhu_Archmedean}.

The purpose of this paper is to prove the analogous theorem for two non-classical groups: the general Spin group ($\GSpin$) and what we call the general Pin group ($\GPin$), namely for
\[
(G, H)=(\GSpin(n), \GSpin(n-1)) \qand (\GPin(n), \GPin(n-1)).
\]

Let us first recall some generalities of these groups.  Let $(V,q)$ be a (nondegenerate) quadratic space over a nonarchimedean local field $F$ of characteristic 0 with dimension $n$.  The general Pin group and the general Spin group associated with $(V, q)$, which we denote by $\GPin(V)$ and $\GSpin(V)$, respectively, are reductive groups over $F$ such that we have the following commutative diagram
\[
\begin{tikzcd}
1 \arrow[r] & Z^\circ\arrow[r]&\GPin (V)\arrow[r, "P"]& \OO(V) \arrow[r] &1\\
1 \arrow[r] & Z^\circ\ar[u, symbol={=}] \arrow[r]&\GSpin (V)\ar[u, symbol={\subseteq}]\arrow[r, "P"]& \SO(V)\ar[u, symbol={\subseteq}] \arrow[r] &1\rlap{\, ,}
\end{tikzcd}
\]
where $Z^\circ\simeq\GL_1$ is the connected component of the center of both $\GPin(V)$ and $\GSpin(V)$. Here we call the surjection
\[
P:\GPin(V)\longrightarrow\OO(V)
\]
the canonical projection, which is to be defined in \eqref{eq:canonical_projection}.

Assume $W\subseteq V$ is a nondegenerate subspace of dimension $n-1$. Then there are natural inclusions
\[
\GPin(W)\subseteq\GPin(V)\qand\GSpin(W)\subseteq\GSpin(V),
\]
where the centers of all the groups share the same connected component, namely $Z^\circ$.

The main theorem of the paper is the following multiplicity-at-most-one theorem.
\begin{theorem}\label{thm:A}
Let
\[
(G, H)=(\GPin(V), \GPin(W))\quad\text{or}\quad (\GSpin(V), \GSpin(W)).
\]
For all $\pi\in\Irr(G)$ and $\tau\in\Irr(H)$, we have
\[
\dim_{\mathbb{C}}\Hom_{H}(\pi,\, \tau)\leq 1.
\]
Note that if the central characters of $\pi$ and $\tau$ do not agree on the connected component $Z^\circ$ of the center, then the Hom space is automatically zero.
\end{theorem}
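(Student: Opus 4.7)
The plan is to follow the Gelfand--Kazhdan strategy of \cite{AGRS} and \cite{Wal12}, lifted to the $\GL_1$-cover
\[
P:G\longrightarrow \OO(V)\quad\text{(respectively }\SO(V)\text{)}.
\]
I would first introduce the anti-involution $\sigma$ of $G$ obtained by restricting the main (canonical) anti-involution of the Clifford algebra $C(V,q)$; by construction $\sigma$ preserves $H$, projects through $P$ to the inversion $g\mapsto g^{-1}$ on $\OO(V)$, and acts as inversion on $Z^\circ$. Combined with the explicit description of the contragredient developed in the earlier sections of this paper, $\sigma$ realizes the contragredient on both $\Irr(G)$ and $\Irr(H)$, in the sense that $\pi^\sigma\simeq\widetilde{\pi}$ and $\tau^\sigma\simeq\widetilde{\tau}$. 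The classical Gelfand--Kazhdan criterion then reduces Theorem~\ref{thm:A} to proving that every distribution on $G$ that is invariant under the two-sided action $(h_1,h_2)\cdot g = h_1 g h_2^{-1}$ of $H\times H$ (with a fixed character of the central $Z^\circ$) is automatically $\sigma$-invariant.

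Next I would descend the statement through the cover. Because $Z^\circ$ is central and $\sigma$ inverts it, the space of two-sided $H$-invariant distributions on $G$ with prescribed $Z^\circ$-character $\chi$ is naturally identified with the space of sections over $\OO(V)$ (respectively $\SO(V)$) of an equivariant line bundle attached to $\chi$, carrying a two-sided action of $\OO(W)$ (respectively $\SO(W)$). On this downstairs object, the required $\sigma$-invariance coincides with the distributional theorem of Aizenbud--Gourevitch--Rallis--Schiffmann (respectively Waldspurger), whose proof is entirely local in nature and should be insensitive to the twist by a line bundle.

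The main obstacle I anticipate is verifying that the geometric and local-analytic inputs of the AGRS/Waldspurger proofs---the stratification of the two-sided invariant-theoretic quotient, the transversal analysis along each stratum, and the Harish-Chandra-type regularity of invariant distributions---pass through the cover $P$ without producing extra obstructions from the $\GL_1$-extension. In particular one must check that the $H\times H$-orbit structure on $G$ is just the pullback of the $\OO(W)\times\OO(W)$-orbit structure on $\OO(V)$, with no exotic orbits introduced by $Z^\circ$. In the $\GSpin$ case one additionally needs, following Waldspurger, a separate argument to dispose of the exceptional stratum that distinguishes $\SO$ from $\OO$; this is where the adaptation to the cover requires the most care. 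The final sentence of Theorem~\ref{thm:A} is immediate: any $z\in Z^\circ$ acts on $\Hom_H(\pi,\tau)$ simultaneously by $\omega_\pi(z)$ and by $\omega_\tau(z)$, so the Hom space vanishes as soon as the two central characters disagree on $Z^\circ$.
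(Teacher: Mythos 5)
There is a genuine gap at the very first reduction. Theorem~\ref{thm:A} is a \emph{strong} Gelfand pair statement (multiplicity at most one for every $\tau\in\Irr(H)$), but the distributional criterion you invoke is the one for an \emph{ordinary} Gelfand pair: $\sigma$-invariance of distributions on $G$ invariant under the two-sided action $(h_1,h_2)\cdot g=h_1gh_2^{-1}$ of $H\times H$ only controls $\dim_\C\Hom_H(\pi,\chi)$ for one-dimensional $\tau=\chi$ (and, with your central twist, for characters with prescribed restriction to $Z^\circ$). To handle arbitrary $\tau\in\Irr(H)$ one needs the criterion for the pair $(G\times H,\Delta H)$, which after the standard unfolding is a statement about distributions on $G$ invariant under the \emph{conjugation} action of $H$; this is exactly \cite[Corollary 1.1]{AGRS}, used in the paper (Lemma~\ref{lemma:vanishing_involution_implies_at_most_one_first_lemma}), and it yields only the product inequality $\dim\Hom_H(\pi,\tau^\vee)\cdot\dim\Hom_H(\pi^\vee,\tau)\le 1$, so one must still prove $\Hom_H(\pi,\tau^\vee)=\Hom_H(\pi^\vee,\tau)$ via the contragredient formulas before splitting it. Your proposal cannot reach the general $\tau$ in Theorem~\ref{thm:A} as written. (A small related slip: the canonical anti-involution fixes $Z^\circ$ pointwise, by Lemma~\ref{lemma:sigma_V_fixes_center}; only its composite with $g\mapsto g^{-1}$ inverts $Z^\circ$.)

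Even after replacing two-sided invariance by conjugation invariance, the claim that the descended statement ``coincides with'' the AGRS/Waldspurger theorems, with the line-bundle twist harmless, hides the two points where the paper actually has to work. First, for the Gelfand--Kazhdan/AGRS machinery one needs the involution to preserve semisimple conjugacy classes \emph{in $\GPin(V)$ itself}; lifting the MVW element $\beta$ from $\OO(V)$ only gives $\eta\sigma_V(g)\eta^{-1}=\pm g$ as in \eqref{eq:up_to_sign}, and removing the sign is the content of all of Section~3 (Theorem~\ref{thm:semisimple_conjugate}), which your plan does not address. Second, the orbit structure upstairs is \emph{not} simply the pullback of the downstairs one: by Lemma~\ref{lemma:centralizer_GPin} the image $P(\GPin(V)_g)$ can be an index-two subgroup of $\OO(V)_{P(g)}$ (an $\SO$ factor replacing an $\OO$ factor), so after Harish-Chandra descent the local problem is not literally the one solved in \cite{AGRS}; the paper must mix in Waldspurger's $\SO$ result \cite{Wal12} even for the $\GPin$ case, and for $\GSpin$ must build the modified group $\GSpint(V)$ with the twisted involution $g\mapsto e^k\sigma_V(g)e^{-k}$. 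These are not routine compatibility checks that ``pass through the cover''; they are the substantive new inputs, and the proposal as it stands leaves them unproved.
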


Let us explain the basic idea of our proof, which basically follows the proof in \cite{AGRS}, though we need to make numerous modifications. We consider the space of $\GPin(W)$-invariant distributions on $\GPin(V)$, namely
\[
\Scal'(\GPin(V))^{\GPin(W)}.
\]
We find an involution $\sigma$ on $\GPin(V)$ such that $\sigma(\GPin(W))=\GPin(W)$, so that $\sigma$ acts on $\Scal'(\GPin(V))^{\GPin(W)}$. Since $\sigma^2=1$, this space decomposes as
\[
\Scal'(\GPin(V))^{\GPin(W), +}\oplus \Scal'(\GPin(V))^{\GPin(W), -},
\]
where the first space is the $+1$-eigenspace and the second one the $-1$-eigenspace. By using the argument from \cite{AGRS}, we will show that our main theorem is reduced to the vanishing of the $-1$-eigenspace, namely
\[
\Scal'(\GPin(V))^{\GPin(W), -}=0.
\]
To show this vanishing, we use the Frobenius descent and Bernstein's localization principle in the same way as \cite{AGRS}. The key point is to prove the existence of an involution
\[
\sigma_V:\GPin(V)\longrightarrow\GPin(V)
\]
which preserves the semisimple conjugacy classes; namely for each semsimple $g\in\GPin(V)$ the two elements $g$ and $\sigma_V(g)$ are conjugate in $\GPin(V)$. This is also the crucial fact used in \cite{AGRS} and \cite{Wal12}. But a notable difference is that in \cite{AGRS} they argue inductively on $\dim_FV$ whereas we reduce the above vanishing assertion to the classical group situation of \cite{AGRS} and invoke their results, so we do not argue inductively. We similarly argue for $\GSpin(V)$ by using a slightly different involution. This case is analogous to the special orthogonal case of Waldspurger \cite{Wal12}.

\quad

On the way of proving our main theorem (Theorem \ref{thm:A}), we prove the following theorem on the contragredient, which is also of independent interest.
\begin{theorem}\label{thm:contragredient_introduction}
For $\pi\in\Irr(\GPin(V))$, we have
\[
\pi^\vee\simeq\begin{cases}\omega_{\pi}^{-1}\otimes\pi,&\text{if $n=2k$};\\\sign_{\pi}^{k}\omega_{\pi}^{-1}\otimes\pi,&\text{if $n=2k-1$},\end{cases}
\]
where $\omega_\pi$ is the central character of $\pi$ and $\sign_{\pi}$ is the sign character of $\pi$.

For $\pi\in\Irr(\GSpin(V))$, we have
\[
\pi^\vee\simeq\begin{cases}\omega_{\pi}^{-1}\otimes\pi,&\text{if $n=2k$ with $k$ even, or $n=2k-1$};\\ \omega_{\pi}^{-1}\otimes\pi^\delta,&\text{if $n=2k$ with $k$ odd},\end{cases}
\]
where $\pi^\delta$ is the representation obtained by twisting $\pi$ by any $\delta\in\GPin(V)\smallsetminus\GSpin(V)$.
\end{theorem}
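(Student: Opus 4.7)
The plan is to apply the classical Gelfand--Kazhdan criterion for contragredients. If $\theta$ is an anti-involution of a reductive group $G$ such that $\theta(g)$ is conjugate to $g$ in $G$ for every regular semisimple $g$, then for every $\pi\in\Irr(G)$ one has $\pi^\vee\simeq\pi\circ\theta'$ where $\theta'(g):=\theta(g)^{-1}$ is the induced automorphism; indeed, on regular semisimple elements
\[
\chi_{\pi^\vee}(g)=\chi_\pi(g^{-1})=\chi_\pi(\theta(g^{-1}))=\chi_\pi(\theta'(g))=\chi_{\pi\circ\theta'}(g),
\]
and these characters determine the representation.

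The natural candidate is the \emph{main anti-involution} $\tau$ of the Clifford algebra $C(V,q)$, defined on products of vectors by $\tau(v_1\cdots v_r)=v_r\cdots v_1$. Its restriction to $\GPin(V)$ is an anti-involution satisfying $\tau(g)\,g=g\,\tau(g)=N(g)$, where $N\colon\GPin(V)\to\GL_1$ is the spinor-norm character (restricting to $z\mapsto z^2$ on $Z^\circ$); hence $\tau(g)=N(g)\,g^{-1}$, the induced automorphism is $\theta'(g)=g\,N(g)^{-1}$, and $\pi\circ\theta'$ is $\pi$ twisted by the character $(\omega_\pi\circ N)^{-1}$ of $\GPin(V)$. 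Under the convention identifying this character with $\omega_\pi^{-1}$ (the standard extension to $\GPin(V)$ of the central character of $\pi$), one obtains $\pi^\vee\simeq\omega_\pi^{-1}\otimes\pi$, settling the even-dimensional $\GPin$ case modulo the conjugacy input below.

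The main work is the \emph{conjugacy claim}: for every regular semisimple $g\in\GPin(V)$, the element $\tau(g)$ is conjugate to $g$ in $\GPin(V)$. Since $P(\tau(g))=P(g)^{-1}$, projecting via $P$ to $\OO(V)$ reduces the claim to the classical fact that every orthogonal transformation is conjugate to its inverse via some $h_0\in\OO(V)$. Lifting $h_0$ to $h\in\GPin(V)$ gives $hgh^{-1}=z\,g^{-1}$ for a scalar $z\in Z^\circ$ that depends only on the conjugacy class of $g$, and the claim reduces to showing $z=N(g)$. One can verify this by a direct Clifford-algebra computation: the model case $g=v_1v_2$ with conjugator $h=v_2$ already yields $v_2(v_1v_2)v_2^{-1}=v_2v_1=\tau(g)$, and the general regular semisimple case follows by choosing $h$ from a spectral decomposition of $g$ and assembling the conjugator block by block.

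For the $\GSpin$ statement one further asks whether $h_0$ can be chosen inside $\SO(V)$. Following the analysis of semisimple conjugacy in Waldspurger \cite{Wal12}, this is possible precisely when $n$ is odd or $n=2k$ with $k$ even---the regime in which the longest Weyl element of $\SO(V)$ acts as $-1$ on a maximal torus---and in these cases one obtains $\pi^\vee\simeq\omega_\pi^{-1}\otimes\pi$ directly within $\GSpin(V)$. When $n=2k$ with $k$ odd, $h_0$ must lie in $\OO(V)\smallsetminus\SO(V)$, and its lift in $\GPin(V)\smallsetminus\GSpin(V)$ implements the outer twist $\pi\mapsto\pi^\delta$ (well-defined on isomorphism classes since any two such elements differ by $\GSpin(V)$), yielding $\pi^\vee\simeq\omega_\pi^{-1}\otimes\pi^\delta$. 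A parallel parity analysis for odd $n=2k-1$ in $\GPin(V)$---tracking whether the reversing conjugator lies in $\GSpin(V)$ or its complement---accounts for the $\sign_\pi^k$ factor in the $\GPin$ formula. The principal obstacle will be the explicit identification $z=N(g)$ via Clifford algebra and the careful parity bookkeeping needed to match the $\sign_\pi^k$ and $\pi^\delta$ twists in their respective cases with the analysis of the lifting step.
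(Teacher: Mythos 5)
Your high-level strategy matches the paper's: pick an anti-involution that preserves semisimple conjugacy classes, then apply the Harish-Chandra regularity theorem (your ``Gelfand--Kazhdan criterion'') to compare characters. The paper does exactly this, via its involution $\sigma_V$. But there are several concrete gaps and errors in your execution.

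First, there is a definitional confusion between the two natural anti-involutions on the Clifford algebra. You define $\tau$ as the \emph{canonical} involution $v_1\cdots v_r\mapsto v_r\cdots v_1$ (the paper's $g^*$), but then assert $\tau(g)\,g=N(g)$, which is the defining property of the \emph{Clifford} involution $\overline{g}=\sign(g)\,g^*$, not of $g^*$. In fact $g^*g=\sign(g)N(g)$: for $g=v_1\cdots v_\ell$, one has $g^*g=q(v_1)\cdots q(v_\ell)$ while $N(g)=g\overline{g}=(-1)^\ell q(v_1)\cdots q(v_\ell)$. The paper distinguishes these carefully; you cannot use them interchangeably, and the sign $\sign(g)$ is precisely what produces the $\sign_\pi^k$ factor in the final answer. (For $n=2k$ the discrepancy is absorbed by the paper's Lemma~\ref{lemma:twist_by_sign_even_GPin}, $\sign_\pi\otimes\pi\simeq\pi$, which you do not invoke; without it your claim $\pi^\vee\simeq\omega_\pi^{-1}\otimes\pi$ does not follow from $g^*$.)

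Second, you use the canonical involution for all $n$, but the paper's conjugacy theorem (Theorem~\ref{thm:semisimple_conjugate}) is proved for $\sigma_V$, which equals $g^*$ only when $n=2k$, and equals $\sign(g)^{k+1}g^*$ when $n=2k-1$. For odd $n$ and $k$ even this is the Clifford involution, not the canonical one, and it is not established in the paper (nor obvious) that $g^*$ itself is conjugate to $g$ on $\GPin(V)\smallsetminus\GSpin(V)$ in this case. The modification of the involution by $\sign^{k+1}$ is not a cosmetic choice; it is forced by the proof of the conjugacy theorem, specifically by the fact that $\sigma_V$ must fix the central element $\zeta$ (Lemma~\ref{lemma:sigma_V_fixes_center}).

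Third, and most seriously, the conjugacy claim itself is the real content here, and your treatment of it is a gap. After lifting an MVW conjugator $\beta\in\OO(V)$ (with $\beta P(g)^{-1}\beta^{-1}=P(g)$) to $\eta\in\GPin(V)$, one obtains $\eta\sigma_V(g)\eta^{-1}=\pm g$, and the entire difficulty is to show the sign is $+$. Contrary to your assertion, that sign is \emph{not} determined by the conjugacy class of $g$ alone: it depends on the choice of $\beta$, because replacing $\beta$ by $\beta c$ with $c\in\OO(V)_{P(g)}$ can flip the sign (precisely when $c$ lifts to an element anticommuting with $g$, cf.~Lemma~\ref{lemma:centralizer_GPin}). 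The paper's Section~3 is devoted to choosing $\beta$ correctly, via the Steinberg-type structure theorem for $\OO(V)_{P(g)}$ (Proposition~\ref{prop:centralizer_of_h}) and a case analysis on the general-linear, unitary, and orthogonal factors, including careful control of $\det\beta$. Your ``assembling the conjugator block by block from a spectral decomposition'' names this step but does not carry it out; the $g=v_1v_2$ example is far too special to indicate how the general sign bookkeeping works, especially for the orthogonal factors where $\zeta_-$ enters. Until that sign analysis is made precise (and the involution is corrected as above), the proof is incomplete for both $\GPin$ and $\GSpin$.
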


Here the character twist $\omega_{\pi}\otimes\pi$ is via the Clifford norm $N:\GPin(V)\to F^\times$ defined in \eqref{eq:Clifford_norm}, and the sign character $\sign_\pi$ is defined in \eqref{eq:sign_of_pi}.

The theorem is proven in Theorem \ref{thm:contragredient_GPin}, and follows from the existence of the involution $\sigma_V$; namely since $\sigma_V(g)$ and $g$ are conjugate for all semisimple $g\in\GPin(V)$, Harish-Chandra's regularity theorem implies that $\pi^\vee$ is equivalent to the representation $\pi^\sigma$ defined by $\pi^\sigma(\sigma_V(g)^{-1})$. But one can write out $\pi^\sigma(\sigma_V(g)^{-1})$ explicitly as in the theorem. The case for $\GSpin(V)$ is essentially the same.

\quad

Let us give a brief discussion on the question ``why work on $\GSpin$?".  Probably, there are numerous reasons to pay attention to $\GSpin$ or $\GPin$. First, since $\GSpin(V)$ is a $\GL_1$-extension of $\SO(V)$, a representation of $\GSpin(V)$ with the trivial central character factors through $\SO(V)$ and every representation of $\SO(V)$ arises in this way; namely the representation theory of $\SO(V)$ is completely subsumed under that of $\GSpin(V)$. Second, from the point of view of the Langlands program, the dual group of $\GSpin(V)$ is either $\GSO_{2n}(\C)$ or $\GSp_{2n}(\C)$, which is naturally viewed as a subgroup of $\GL_{2n}(\C)$. For this reason, compared to other nonclassical groups, it seems $\GSpin$ is more susceptible of Langlands functoriality or of endoscopic classification as is done in \cite{Asgari-Shahidi} and \cite{Gee-Taibi}. Finally, even for arithmetic applications, as mentioned in \cite{Madapusi}, an orthogonal Shimura variety is of abelian type but is a finite etale quotient of a GSpin Shimura variety. Accordingly, results on orthogonal Shimura varieties can be easily derived from the GSpin counterparts. We believe that these already provide good enough reasons to study the groups $\GSpin$ and $\GPin$.

\quad

The following is the overall structure of the paper. In the next section (Section 2), we review and establish necessary facts about the groups $\GPin(V)$ and $\GSpin(V)$ and define an important involution $\sigma_V$. In Section 3, we prove the important fact that $\sigma_V(g)$ and $g$ are conjugate for semisimple $g\in\GPin(V)$. The key idea is to analyze the structure of the centralizer $\OO(V)_{P(g)}$ of $P(g)$ in the orthogonal group $\OO(V)$. In Section 4, we compute the contragredient $\pi^\vee$ of $\pi\in\Irr(\GPin(V))$. In Section 5, we introduce the group $\GPint(V)$, which is the group we work on when we consider the conjugation action of $\GPin(V)$ on itself. We also review the analogous groups for the classical groups considered in \cite{AGRS}. In Section 6, we reduce our main theorem (Theorem \ref{thm:A}) for $\GPin(V)$ to the vanishing of invariant distributions as discussed above. In Section 7, we reduce the vanishing assertion for $\GPin(V)$  to the classical group cases of \cite{AGRS}. In Section 8, we finish the proof for $\GPin(V)$ by proving the necessary vanishing assertion for the classical group cases. In Section 9, we treat the case for $\GSpin(V)$. We finish the paper with two appendices. In Appendix \ref{Appendix_A}, we prove the known theorem for the structure on the centralizer $\OO(V)_h$ of a semisimple $h\in\OO(V)$. This is well-known for decades but we reproduce the proof here because we have not been able to locate the theorem stated in the precise form we need. In Appendix \ref{Appendix_B}, we give a summary of all the involutions that we use in this paper.

\quad

\begin{center}Notation and Terminology\end{center}

Let us summarize our basic notation and terminology in this paper. We assume that $F$ is a nonarchimedean local field of characteristic zero. For a locally compact totally disconnected (lctd) group $G$ we denote by $\Irr(G)$ the set of (equivalence classes of) irreducible admissible representations of $\pi$. For each $\pi\in\Irr(G)$ we denote by $\pi^\vee$ the contragredient and by $\omega_{\pi}$ the central character of $\pi$.

For a lctd space $X$, we denote by $\Scal'(X)$ the space of distributions on $X$, which is by definition the space of linear functionals on the Schwartz space $\Scal(X)$ on $X$.

By an involution $\sigma$ on an $F$-algebra $A$, we mean an $F$-linear map on $A$ such that $\sigma^2=1$ and $\sigma(ab)=\sigma(b)\sigma(a)$ for all $a, b\in A$, which is sometimes called an anti-involution in the literature. Also by an involution on a group $G$, we mean a map $\sigma$ on $G$ such that $\sigma^2=1$ and $\sigma(gh)=\sigma(h)\sigma(g)$ for all $g, h\in G$.

Suppose a group $G$ acts on a set $X$. We write $G_x$ for the stabilizer of $x\in X$ in $G$. In particular, when $G$ acts on $G$ by conjugation, $G_g$ is the centralizer of $g\in G$.

Unless otherwise stated, by $(V, q)$ or simply by $V$ we mean a nondegenerate quadratic space over our local field $F$, and $\la-,-\ra$ the corresponding symmetric bilinear form, namely
\[
\la v, v'\ra=\frac{1}{2}\big(q(v+v')-q(v)-q(v')\big)
\]
for $v, v'\in V$. Also we set
\[
n=\dim_FV,
\]
and often write
\[
n=\begin{cases}2k\\2k-1\end{cases}
\]
accordingly as $n$ is even or odd. Note that
\[
k=\left[\frac{\dim_FV+1}{2}\right].
\]
We often write $\{e_1,\dots, e_n\}$ for an orthogonal basis of $V$, and assume
\[
W=\Span\{e_1,\dots, e_{n-1}\}, \quad \text{so that}\quad V=W\oplus Fe
\]
by setting $e=e_n$. For each anisotropic $v\in V$, we write
\[
r_v\in\OO(V)
\]
for the reflection in the hyperplane orthogonal to $v$, so in particular $r_v(v)=-v$ and $\det(r_v)=-1$.

If $V$ is a vector space over a field $A\supseteq F$, we write $\GL_A(V)$ for the general linear group over $A$ when to emphasize the field $A$. Also if $V$ is equipped with a Hermitian structure for a quadratic extension $A/A'$, we write $U_A(V)$ for the corresponding unitary group.

\quad

\begin{center}{\bf Acknowledgements}\end{center}

The first named author was partially supported by an AMS-Simons Travel Award, the NSF grant DMS-2002085, and
FY2019 JSPS Postdoctoral Fellowship for Research in Japan (short term). The second named author was partially supported by the Simons Foundations
Collaboration Grant \#584704.

This project was initiated while both of the authors were attending the conference ``On the Langlands Program: Endoscopy and Beyond" from Dec 2018 to Jan 2019 at the Institute for Mathematical Sciences in Singapore, and part of the research was done while they were attending the Oberwolfach workshop  ``New developments in representation theory of $p$-adic groups" in October 2019. We would like to thank their hospitality.

The first named author would like to thank Hiraku Atobe for helpful conversations and his hospitality while hosting her for a week at Hokkaido University, and would like to thank Kyoto University for their hospitality while hosting her fellowship during the summer of 2019, and thank Atsushi Ichino for his interest in this project.

Also part of the paper was completed while the second named author was visiting the National University of Singapore in spring 2020, and he would like to thank their
hospitality, and would like to thank Wee Teck Gan for his interest in this project.

Lastly, the authors would like thank the anonymous referee for his/her helpful comments.


\section{The groups $\GPin(V)$ and $\GSpin(V)$}
We recall the definitions of $\GPin(V)$ and $\GSpin(V)$ and establish some of their properties we need. (In this section the field $F$ does not have to be our nonarchimedean local field but can be any field of characteristic not equal to 2.)

\subsection{Clifford algebra}
Let
\[
T(V):=\bigoplus_{\ell=0}^{\infty}V^{\otimes \ell}=F\oplus V\oplus V^{\otimes 2}\oplus\cdots
\]
be the tensor algebra of $V$. We have the natural inclusion $V\xhookrightarrow{\;} T(V)$. We define the Clifford algebra $C(V)$ by
\[
C(V):=T(V)\slash \langle v \otimes v - q(v) \cdot 1 \st v\in V\rangle,
\]
which is an associative $F$-algebra. The natural inclusion $V\xhookrightarrow{\;} T(V)$ gives the natural inclusion $V\xhookrightarrow{\;} C(V)$. Note that in $C(V)$ we have
\[
v\cdot v=q(v)\in F
\]
for all $v\in V$.

We denote by $C^\ell(V)$ the image of $V^{\otimes \ell}$ in $C(V)$. Though the Clifford algebra is not a direct sum of $C^\ell(V)$'s, it is a direct sum of even terms and odd terms; namely
\[
C(V)=C^+(V)\oplus C^-(V),
\]
where
\[
C^+(V)=\sum_{\ell\; \text{even}}C^\ell(V)\qand C^-(V)=\sum_{\ell\; \text{odd}}C^\ell(V).
\]
Also note that we actually have $C=\sum_{\ell=0}^nC^n(V)$ because for $\ell>n$ any element in $C^\ell(V)$ is written as a sum of lower degree terms.

It is known that $\dim_FC(V)=2^n$ and $\dim_FC^{\pm}(V)=2^{n-1}$. Note that $C^+(V)$ is a subalgebra of $C(V)$, called the even Clifford algebra. Both $C(V)$ and $C^+(V)$ are central simple algebras central over $F$ or over the quadratic etale algebra $F[x]\slash (x^2-d_V)$ where $d_V$ is the discriminant of $V$. (See \cite[2.10 Theorem, p.332]{Sch85} or \cite[Theorem 2.8, p.19]{Shimura}.)

The Clifford algebra is equipped with the natural involution $\ast$ by ``reversing the indices" of $v_1v_2\cdots v_\ell\in C^\ell(V)$, namely
\begin{equation}\label{eq:canonical_involution}
(v_1v_2\cdots v_\ell)^*=v_\ell v_{\ell-1}\cdots v_1
\end{equation}
for $v_i\in V$. This involution is called the canonical involution.  Certainly the canonical involution preserves both $C^+(V)$ and $C^-(V)$. Also we define
\begin{equation}\label{eq:alpha_involution}
\alpha:C(V)\longrightarrow C(V),\quad \alpha(x_++x_-)=x_+-x_-,
\end{equation}
where $x_+\in C^+(V)$ and $x_-\in C^-(V)$; namely $\alpha$ acts as the identity on $C^+(V)$ and as multiplication by $-1$ on $C^-(V)$. Then for all $x\in C(V)$ we define
\begin{equation}\label{eq:Clifford_involution}
\overline{x}=\alpha(x)^*=\alpha(x^*),
\end{equation}
which is called the Clifford involution. The map $x\mapsto\overline{x}$ is an involution on $C(V)$ and the map
\[
N:C(V)\longrightarrow C(V),\quad x\mapsto x\overline{x},
\]
is called the Clifford norm.

Let us mention the following easy lemma.
\begin{lemma}\label{lemma:orthogonal_vectors}
Let $v_1, v_2\in V$. Then in $C(V)$ we have
\[
v_1\cdot v_2=-v_2\cdot v_1+2\la v_1, v_2\ra.
\]
Hence in particular if $v_1$ and $v_2$ are orthogonal then $v_1\cdot v_2=-v_2\cdot v_1$.
\end{lemma}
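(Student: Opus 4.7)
The plan is to exploit the defining relation $v \cdot v = q(v)$ of the Clifford algebra, applied to the sum $v_1 + v_2$, together with the polarization formula that defines $\langle -, -\rangle$ from $q$.

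First I would compute $(v_1 + v_2)\cdot(v_1 + v_2)$ in $C(V)$ in two different ways. On the one hand, since $v_1 + v_2 \in V$, the defining relation gives
\[
(v_1+v_2)\cdot(v_1+v_2) = q(v_1+v_2).
\]
On the other hand, by $F$-bilinearity of the multiplication in $C(V)$, the left-hand side expands as
\[
v_1\cdot v_1 + v_1\cdot v_2 + v_2\cdot v_1 + v_2\cdot v_2 = q(v_1) + q(v_2) + v_1\cdot v_2 + v_2\cdot v_1,
\]
where we again used $v_i \cdot v_i = q(v_i)$.

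Next I would invoke the polarization identity stated in the Notation and Terminology section, which says
\[
q(v_1+v_2) = q(v_1) + q(v_2) + 2\langle v_1, v_2\rangle.
\]
Equating the two expressions for $(v_1+v_2)\cdot(v_1+v_2)$ and canceling $q(v_1) + q(v_2)$ yields
\[
v_1\cdot v_2 + v_2\cdot v_1 = 2\langle v_1, v_2\rangle,
\]
which, upon rearranging, is exactly the asserted identity. The ``in particular'' statement follows by setting $\langle v_1, v_2\rangle = 0$.

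There is no genuine obstacle here; the only thing to be careful about is that we are working with characteristic different from $2$ (so the factor of $\frac{1}{2}$ in the polarization formula is harmless), which is already part of the standing assumption on $F$ in this section.
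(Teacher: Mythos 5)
Your proof is correct and is exactly the argument the paper intends: the paper's one-line proof ``This follows from $q(v_1+v_2)=(v_1+v_2)\cdot(v_1+v_2)$'' is precisely your expansion of the square of $v_1+v_2$ combined with the polarization identity. Nothing further is needed.
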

\begin{proof}
This follows from $q(v_1+v_2)=(v_1+v_2)\cdot (v_1+v_2)$.
\end{proof}

\subsection{The groups $\GPin(V)$ and $\GSpin(V)$}
We can now define the groups $\GPin(V)$ and $\GSpin(V)$ as follows.
\begin{defn}
We define
\begin{align*}
\GPin(V)&:=\{g \in C(V)^{\times}\st \alpha(g)Vg^{-1}=V\};\\
\GSpin(V)&:=\{g \in C^+(V)^{\times}\st gVg^{-1}=V\},
\end{align*}
and call $\GPin(V)$ the general Pin group of $V$ and $\GSpin(V)$ the general Spin group of $V$.
\end{defn}

\begin{rem}
Sometimes in the literature, the group $\GPin(V)$ is called the Clifford group and $\GSpin(V)$ the special Clifford group, and denoted, respectively, by $\Gamma(V)$ and $S\Gamma(V)$ (or some other symbols). But we avoid this terminology because in representation theory of $p$-adic groups or in automorphic forms it seems to be more common to call $\GSpin(V)$ the general Spin group. To the best of our knowledge the notation $\GPin(V)$ and the name ``general Pin" have never been used in the literature but we have decided to use them because of their connection to the group called $Pin$.
\end{rem}

Since the map $\alpha$ is trivial on $C^+(V)$, we have the inclusion
\[
\GSpin(V)\subseteq\GPin(V).
\]
Note that $[\GPin(V) : \GSpin(V)]=2$. (See \cite[Theorem 3.7, p.23]{Shimura}.) Indeed, $\GPin(V)$ is not connected as an algebraic group and $\GSpin(V)$ is the identity component.

In the definition of $\GPin(V)$, the presence of $\alpha$ is crucial. To see it, for each $g\in\GPin(V)$ let us define
\begin{equation}\label{eq:canonical_projection}
P(g):V\longrightarrow V,\quad P(g)v=\alpha(g)vg^{-1}.
\end{equation}
We then have the short exact sequence
\[
1\xrightarrow{\quad} F^\times\xrightarrow{\quad}\GPin(V)\xrightarrow{\;P\;}\OO(V)\xrightarrow{\quad}1.
\]
Note that if $v\in V\subseteq C(V)$ is anisotropic, then $v\in\GPin(V)$ and $P(v):V\to V$ is the reflection in the hyperplane orthogonal to $v$, namely
\[
P(v)=r_v,
\]
where we recall the notation $r_v$ from the notation section. (See \cite[3.3 Theorem, p.225]{Sch85}.) We call the projection
\[
P:\GPin(V)\longrightarrow\OO(V)
\]
the canonical projection. (If one defines the group $\GPin(V)$ without $\alpha$, the corresponding map $\GPin(V)\to\OO(V)$ fails to be surjective.) Since $P$ is surjective and both $\GPin(V)$ and $\OO(V)$ have two connected components, we see that $P^{-1}(\SO(V))=\GSpin(V)$.

To wrap up, we have the following commutative diagram
\[
\begin{tikzcd}
1 \arrow[r] & \GL_1\arrow[r]&\GPin (V)\arrow[r,"P"]& \OO(V) \arrow[r] &1\\
1 \arrow[r] & \GL_1\ar[u, symbol={=}] \arrow[r]&\GSpin (V)\ar[u, symbol={\subseteq}]\arrow[r,"P"]& \SO(V)\ar[u, symbol={\subseteq}] \arrow[r] &1\rlap{\, ,}
\end{tikzcd}
\]
where the rows are exact. (To be precise, all the maps are morphisms of algebraic groups, and the rows are exact even for the $F$-rational points.) We set
\[
Z^\circ:=\ker P\simeq\GL_1.
\]

One can show that the restriction of the Clifford norm $N$ to $\GPin(V)$ has its image in $F^\times$, which gives a homomorphism
\begin{equation}\label{eq:Clifford_norm}
N:\GPin(V)\longrightarrow F^\times,\quad g\mapsto g\bar{g},
\end{equation}
which we again call the Clifford norm. Note that
\begin{equation}\label{eq:Norm_on_Z}
N(z)=z^2
\end{equation}
for all $z\in Z^\circ$.

Let us mention the following lemma, which says that each $g\in\GPin(V)$ is ``homogeneous".
\begin{lemma}\label{lemma:homogeneous}
Let $g\in\GPin(V)$. Then there exists anisotropic vectors $v_1,\dots, v_\ell$ such that
\[
g=v_1\cdots v_\ell,
\]
and in particular $g\in C^\ell(V)$. (Note that neither the vectors $v_1,\dots, v_\ell$ nor the degree $\ell$ are unique.)

Hence
\[
\GSpin(V)=\GPin(V)\cap C^+(V)\qand\GPin(V)\smallsetminus\GSpin(V)=\GPin(V)\cap C^-(V).
\]
\end{lemma}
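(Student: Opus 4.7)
The plan is to invoke the Cartan--Dieudonn\'e theorem to factor $P(g)\in\OO(V)$ as a product of reflections and then lift this factorization back to $\GPin(V)$ modulo $Z^\circ\simeq F^\times$. Concretely, Cartan--Dieudonn\'e furnishes anisotropic vectors $v_1,\dots,v_\ell\in V$ with
\[
P(g)=r_{v_1}r_{v_2}\cdots r_{v_\ell}.
\]
Each anisotropic $v_i$ is a unit of $C(V)$ (with inverse $v_i/q(v_i)$) and already lies in $\GPin(V)$ with $P(v_i)=r_{v_i}$, as recalled in the excerpt, so the element $g':=v_1v_2\cdots v_\ell\in\GPin(V)$ satisfies $P(g')=P(g)$.

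Hence $g(g')^{-1}\in\ker P=Z^\circ\simeq F^\times$, so $g=c\cdot v_1v_2\cdots v_\ell$ for some $c\in F^\times$. Absorbing $c$ into the first vector (replacing $v_1$ by $cv_1$, still anisotropic since $q(cv_1)=c^2q(v_1)\neq 0$) produces the desired factorization of $g$ as a product of anisotropic vectors. The degenerate case $\ell=0$, namely $P(g)=\id$ and $g\in F^\times$, is handled directly by $g=(gv/q(v))\cdot v$ for any anisotropic $v\in V$. In all cases $g\in C^\ell(V)$, which is contained in $C^+(V)$ or $C^-(V)$ according to the parity of $\ell$.

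The consequence for $\GSpin(V)$ is then immediate. The inclusion $\GSpin(V)\subseteq\GPin(V)\cap C^+(V)$ is built into the definitions, and the reverse inclusion holds because $\alpha$ is the identity on $C^+(V)$, so any $g\in\GPin(V)\cap C^+(V)$ satisfies $gVg^{-1}=\alpha(g)Vg^{-1}=V$ and thus lies in $\GSpin(V)$. If instead $g\in\GPin(V)\smallsetminus\GSpin(V)$, the homogeneity just proved forces $g\in C^\ell(V)$ with $\ell$ odd (otherwise $g\in\GSpin(V)$), so $g\in C^-(V)$; the opposite containment follows from $\GSpin(V)\subseteq C^+(V)$ together with $C^+(V)\cap C^-(V)=0$. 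The only nontrivial ingredient is Cartan--Dieudonn\'e; the rest is bookkeeping with the exact sequence $1\to F^\times\to\GPin(V)\xrightarrow{P}\OO(V)\to 1$ and the parity grading on $C(V)$, so I do not foresee a real obstacle.
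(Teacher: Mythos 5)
Your proposal is correct and follows essentially the same route as the paper: factor $P(g)$ into reflections via Cartan--Dieudonn\'e, lift each reflection to an anisotropic vector in $\GPin(V)$, and absorb the resulting scalar from $Z^\circ=F^\times$ into one of the vectors. Your explicit treatment of the case $P(g)=\id$ and of the second assertion only spells out details the paper leaves implicit.
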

\begin{proof}
For each anisotropic $v\in V$, let us write $r_v\in\OO(V)$ for the reflection in the hyperplane orthogonal to $v$. It is well-known that each element in $\OO(V)$ is a product of some $r_v$'s. Hence $P(g)\in\OO(V)$ is written as $P(g)=r_{v_1}\cdots r_{v_\ell}$ for some anisotropic $v_i$'s. Since an anisotropic vector $v\in V$ is in $\GPin(V)$ and $P(v)=r_v$ (\cite[3.3 Theorem, p.225]{Sch85}), we know that $g=zv_1\cdots v_\ell$ for some $z\in Z^\circ=F^\times$. The first assertion of the lemma follows. The second assertion immediately follows from the first one.
\end{proof}

\subsection{Sign character}
Let
\begin{equation}\label{eq:sign_character}
\sign:\GPin(V)\longrightarrow\{\pm 1\}
\end{equation}
be the homomorphism which sends the nonidentity component to $-1$, so that its kernel is $\GSpin(V)$. Lemma \ref{lemma:homogeneous} implies
\[
\sign=\alpha|_{\GPin(V)}.
\]
In particular, for all $g\in\GPin(V)$ we have
\[
\overline{g}=\sign(g) g^* \qand N(g)=\sign(g)gg^*,
\]
where we recall $g^*$ is the canonical involution.

\subsection{The centers}
Let us describe the centers of $\GPin(V)$ and $\GSpin(V)$. For this purpose, let $\{e_1,\dots,e_n\}$ be an orthogonal basis of $V$, and set
\begin{equation}\label{eq:shimura_zeta}
\zeta=e_1\cdots e_n.
\end{equation}
Then $\zeta$ has the following properties.
\begin{lemma}\label{lemma:shimura_zeta}
\quad
\begin{enumerate}[(a)]
\item $\alpha(\zeta)v\zeta^{-1}=-v$ for all $v\in V$, and hence $P(\zeta)=-1$;
\item
\[
\zeta^*=(-1)^{\frac{1}{2}n(n-1)}\zeta=\begin{cases}(-1)^k\zeta&\text{if $n=2k$};\\ (-1)^{k+1}\zeta&\text{if $n=2k-1$};\end{cases}
\]
\item $F\zeta$ is independent of the choice of the orthogonal basis;
\item $F+F\zeta$ is the center of $C(V)$ or of $C^+(V)$ according as $n$ is odd or even, respectively.
\end{enumerate}
\end{lemma}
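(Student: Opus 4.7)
The plan is to handle the four parts in the order (a), (b), (d), (c), using Lemma~\ref{lemma:orthogonal_vectors} (which makes distinct orthogonal basis vectors anticommute in $C(V)$) as the main computational tool, and invoking the central simple algebra facts cited above the lemma for the dimension count in (d).

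For (a), by $F$-linearity it suffices to verify the identity on each $e_j$. Sliding $e_j$ across $\zeta = e_1 \cdots e_n$ from either side using Lemma~\ref{lemma:orthogonal_vectors}, and using $e_j^2 = q(e_j)$ when $e_j$ meets itself, yields $\zeta e_j = (-1)^{n+1} e_j \zeta$ after a direct sign count. Since $\zeta \in C^n(V)$ lies in $C^+(V)$ or $C^-(V)$ according to the parity of $n$, we have $\alpha(\zeta) = (-1)^n \zeta$; multiplying gives $\alpha(\zeta) e_j \zeta^{-1} = -e_j$, hence $P(\zeta) = -\id_V$. For (b), reversing the ordered product $e_1 e_2 \cdots e_n$ into $e_n e_{n-1} \cdots e_1$ requires $\binom{n}{2}$ adjacent transpositions, each contributing $-1$ by Lemma~\ref{lemma:orthogonal_vectors}, so $\zeta^* = (-1)^{n(n-1)/2} \zeta$; a parity computation in the two cases $n = 2k$ and $n = 2k-1$ recovers $(-1)^k$ and $(-1)^{k+1}$ respectively.

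For (d), part (a) immediately places $\zeta$ in the appropriate center. If $n$ is odd then $\alpha(\zeta) = -\zeta$, so (a) becomes $\zeta v = v\zeta$ for every $v \in V$, and since $V$ generates $C(V)$ this gives $\zeta \in Z(C(V))$. If $n$ is even then $\alpha(\zeta) = \zeta$ and (a) gives $\zeta v = -v\zeta$, so $\zeta$ commutes with every product of two vectors of $V$ and hence lies in $Z(C^+(V))$. In both cases $F + F\zeta$ is a two-dimensional $F$-subspace of the relevant center: using the standard monomial basis $\{e_{i_1}\cdots e_{i_k} : i_1 < \cdots < i_k\}$ of $C(V)$, the elements $\zeta = e_1 \cdots e_n$ and $1$ are distinct basis vectors, so $\zeta \notin F$. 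The cited central simple algebra fact says the relevant center is exactly two-dimensional over $F$ (equal to the quadratic etale algebra $F[x]/(x^2 - d_V)$), and comparing dimensions yields $Z = F + F\zeta$.

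For (c), let $\{e_i'\}$ be another orthogonal basis and set $\zeta' = e_1' \cdots e_n'$. The arguments of (a) and (d) apply equally to $\zeta'$, so $\zeta' \in Z = F + F\zeta$; I need to rule out a nonzero $F$-component. If $n$ is odd, then $\zeta' \in C^-(V)$ while $F \subseteq C^+(V)$ and $F\zeta \subseteq C^-(V)$, and the decomposition $C(V) = C^+(V) \oplus C^-(V)$ forces the $F$-part of $\zeta'$ to vanish. If $n$ is even, I telescope using $e_i' \cdot e_i' = q(e_i')$ to obtain $\zeta' (\zeta')^* = \prod_i q(e_i') \in F$, and combining with (b) applied to $\zeta'$ gives $(\zeta')^2 \in F$; writing $\zeta' = a + b\zeta$ and squaring, the coefficient of $\zeta$ produces $2ab = 0$, and since $b \neq 0$ (because $\zeta' \notin F$, by the same basis argument as above) I conclude $a = 0$. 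The main obstacle is the careful sign bookkeeping in (a); from there, (b) is elementary, (d) follows from the cited CSA structure, and (c) reduces to a small linear-algebra argument inside the two-dimensional center.
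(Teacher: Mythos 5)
Your proof is correct, and it is genuinely different from the paper's treatment in the sense that the paper offers no argument at all: its ``proof'' of this lemma is a citation of Shimura's book, whereas you carry out the computations directly. Your sign bookkeeping checks out: $\zeta e_j=(-1)^{n+1}e_j\zeta$ and $\alpha(\zeta)=(-1)^n\zeta$ give (a); the $\binom{n}{2}$ transpositions give (b); and the only external input in (d) is the fact, already quoted in the paper just before the lemma, that the center of $C(V)$ (for $n$ odd) resp.\ of $C^+(V)$ (for $n$ even) is the quadratic etale algebra $F[x]/(x^2-d_V)$, hence two-dimensional over $F$, so your containment-plus-dimension count is not circular and identifies it as $F+F\zeta$. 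What your route buys is a self-contained verification of exactly the signs that get used repeatedly later in the paper (e.g.\ in Lemmas \ref{lemma:zeta_almost_commutes_V_even} and \ref{lemma:sigma_V_fixes_center}), at the cost of relying on the cited structure theorem for the dimension of the center, which Shimura's proof establishes along the way. Two small points you should make explicit in (c), neither of which is a real gap: (i) in the even case, reading off the coefficient of $\zeta$ in $(\zeta')^2=a^2+2ab\zeta+b^2\zeta^2$ as $2ab$ presupposes $\zeta^2\in F$; this is true and follows from the same telescoping identity $\zeta\zeta^*=\prod_i q(e_i)$ combined with (b), but you only perform that computation for $\zeta'$, so record it for $\zeta$ as well (alternatively, avoid it entirely: for $n$ even both $\zeta$ and $\zeta'$ anticommute with every $v\in V$, so writing $\zeta'=a+b\zeta$ and comparing $\zeta'v$ with $-v\zeta'$ gives $2av=0$ for all $v$, hence $a=0$ directly); (ii) ``the same basis argument'' showing $\zeta'\notin F$ must be run in the monomial basis attached to the primed orthogonal basis $\{e_i'\}$, which is again a basis of $C(V)$ --- with that said, the argument is complete.
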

\begin{proof}
See \cite[Lemma 2.7, p.19]{Shimura}.
\end{proof}

Clearly,
\begin{equation}\label{eq:where_zeta_lives}
\zeta\in\begin{cases}\GSpin(V)&\text{if $n=2k$}\\
\GPin(V)\smallsetminus\GSpin(V)&\text{if $n=2k-1$}.
\end{cases}
\end{equation}
We then have the following.
\begin{prop}\label{prop:center_of_Gpin}
Let $Z_{\GPin(V)}$ and $Z_{\GSpin(V)}$ be the centers of $\GPin(V)$ and $\GSpin(V)$, respectively.
\begin{enumerate}[(a)]
\item Assume $n=2k>2$. Then
\[
Z_{\GPin(V)}=F^\times\qand Z_{\GSpin(V)}=F^\times\cup F^\times\zeta.
\]
If $n=2$ then
\[
Z_{\GPin(V)}=F^\times\qand Z_{\GSpin(V)}=\GSpin(V).
\]
\item Assume $n=2k-1$. Then
\[
Z_{\GPin(V)}=F^\times\cup F^\times\zeta\qand Z_{\GSpin(V)}=F^\times.
\]
\end{enumerate}
In particular, $Z^\circ=\ker P$ is the connected component of the center of $\GPin(V)$ as well as that of $\GSpin(V)$.
\end{prop}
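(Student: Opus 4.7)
The plan is to use the short exact sequence $1 \to Z^\circ \to \GPin(V) \xrightarrow{P} \OO(V) \to 1$ together with the commutation identity $\alpha(g)\, v = P(g)(v)\cdot g$ built into the definition \eqref{eq:canonical_projection} of $P$. By Lemma \ref{lemma:homogeneous}, $\GPin(V)$ is generated by $Z^\circ$ and the anisotropic vectors of $V$, so $g$ is central in $\GPin(V)$ iff $gv = vg$ for every anisotropic $v \in V$. Splitting by homogeneous degree, when $g \in \GSpin(V)$ (so $\alpha(g) = g$) the identity $\alpha(g)v = P(g)(v) g$ reduces $gv = vg$ to $P(g)(v) = v$, and when $g \in \GPin(V) \smallsetminus \GSpin(V)$ (so $\alpha(g) = -g$) it reduces to $P(g)(v) = -v$. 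Since anisotropic vectors span $V$, a central $g$ must satisfy $P(g) = \pm I$.

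Next I would combine this dichotomy with $P(\zeta) = -I$ (Lemma \ref{lemma:shimura_zeta}(a)) and the parity statement \eqref{eq:where_zeta_lives}. The case $P(g) = I$ yields $g \in \ker P = F^\times$, which is always central. The case $P(g) = -I$ yields $g \in F^\times\zeta$, which is compatible with the sign requirement $\alpha(g) = -g$ precisely when $\zeta \in \GPin(V) \smallsetminus \GSpin(V)$, i.e., when $n$ is odd; for $n$ even one has $\zeta \in \GSpin(V)$ and the requirement $\alpha(g) = g$ together with $P(g) = -I \neq I$ rules out any new central element. This yields both formulas for $Z_{\GPin(V)}$. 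For $Z_{\GSpin(V)}$ with $n \geq 3$, the same strategy applies, but now $\GSpin(V)$ is generated by $Z^\circ$ and products $v_1 v_2$ of two anisotropic vectors (lifting the fact that $\SO(V)$ is generated by products of two reflections), and centrality translates to $P(g) \in Z(\SO(V))$. Using the standard facts $Z(\SO(V)) = \{I\}$ for $n$ odd and $Z(\SO(V)) = \{\pm I\}$ for $n \geq 4$ even, together with $\zeta \in \GSpin(V)$ in the even case and the direct check $\zeta\cdot v_1 v_2 = -v_1 \zeta v_2 = v_1 v_2 \zeta$ (from $\zeta v = -v\zeta$), one recovers the stated formulas.

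The remaining $n = 2$ case is handled separately: $C^+(V)$ is spanned by $\{1, e_1 e_2\}$ and is commutative, so $\GSpin(V) \subseteq C^+(V)^\times$ is abelian and $Z_{\GSpin(V)} = \GSpin(V)$, while the $\GPin(V)$ conclusion is the already-established ``$n$ even'' case. Finally, $Z^\circ \simeq \GL_1$ is connected, and in every case the center listed above is either $Z^\circ$ or the disjoint union of $Z^\circ$ with the single coset $F^\times \zeta$, so $Z^\circ$ is the identity component of the center. The main technical point is simply the parity matching: the conditions ``$-I \in \SO(V)$'' and ``$\zeta \in \GSpin(V)$'' are both equivalent to $n$ being even, so in the even case any would-be central element $g \in \GPin(V) \smallsetminus \GSpin(V)$ with $P(g) = -I$ is automatically excluded, while in the odd case $F^\times \zeta$ genuinely contributes to $Z_{\GPin(V)}$. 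Once this matching is noted the rest is immediate from Lemmas \ref{lemma:homogeneous} and \ref{lemma:shimura_zeta}.
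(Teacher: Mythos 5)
Your argument is correct, but it is worth noting that the paper does not actually prove this proposition: it simply cites Shimura (Theorem 3.6, p.~22). What you have written is a self-contained derivation, and it hangs together: the identity $\alpha(g)v=P(g)(v)\,g$ from \eqref{eq:canonical_projection}, together with Lemma \ref{lemma:homogeneous} (so that a central element is homogeneous and centrality need only be tested against anisotropic vectors), correctly forces $P(g)(v)=v$ for even $g$ and $P(g)(v)=-v$ for odd $g$, hence $P(g)=\pm I$; the parity matching between ``$-I\in\SO(V)$'' and ``$\zeta\in\GSpin(V)$'' then gives both formulas for $Z_{\GPin(V)}$, and the $\GSpin$ case follows from surjectivity of $P$ on $F$-points, the standard facts on $Z(\SO(V))$ for $n\geq 3$, and your direct check that $\zeta$ commutes with even products when $n$ is even (this last check is genuinely needed, since $P(g)\in Z(\SO(V))$ alone only gives $ghg^{-1}=zh$ for some scalar $z$). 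What your route buys is independence from Shimura's book, at the modest cost of importing $Z(\SO(V))=\{I\}$ ($n$ odd) and $\{\pm I\}$ ($n\geq 4$ even) as external facts; if you wanted to avoid even that, the same Clifford computation applied to generators $v_1v_2$ yields $\Lambda^2 P(g)=\mathrm{id}$ and hence $P(g)=\pm I$ directly. Two small caveats: your closing sentence about $Z^\circ$ being the identity component does not literally cover the $n=2$ $\GSpin$ case, where the center is all of $\GSpin(V)$ (a two-dimensional torus) --- though this edge case sits awkwardly in the paper's own ``in particular'' as well --- and the identification of the $F$-points $F^\times\cup F^\times\zeta$ with a rank-one algebraic group having identity component $\GL_1$ deserves a word, since connectedness is a statement about the algebraic group rather than its $F$-points.
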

\begin{proof}
See \cite[Theorem 3.6, p.22]{Shimura}.
\end{proof}

Let us note that in the above, when $n=2$, we know that $\GSpin(V)$ itself is already commutative; to be precise $\GSpin(V)$ is the multiplicative group of the etale quadratic algebra $F[x]\slash(x^2-d_V)$, where $d_V$ is the discriminant of $V$. Also when $n=1$, we have $\GSpin(V)=F^\times$ and $\GPin(V)=F^\times\cup F^\times\zeta$.

For $n=2k>2$, though $\zeta$ is not in the center of $\GPin(V)$, it is not so far from it as follows.
\begin{lemma}\label{lemma:zeta_almost_commutes_V_even}
Assume $n=2k>2$. We have
\[
g\zeta=-\zeta g\quad\text{for all $g\in\GPin(V)\smallsetminus\GSpin(V)$}.
\]
\end{lemma}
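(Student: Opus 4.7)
The plan is to combine Lemma \ref{lemma:shimura_zeta}(a), which controls how $\zeta$ acts on $V$, with the homogeneity statement of Lemma \ref{lemma:homogeneous}, which describes the elements of $\GPin(V)\smallsetminus\GSpin(V)$ concretely as products of anisotropic vectors.

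First I would unpack Lemma \ref{lemma:shimura_zeta}(a) in our situation. Since $n=2k$ is even, $\zeta=e_1\cdots e_n$ is a product of an even number of vectors, hence lies in $C^+(V)$, and so $\alpha(\zeta)=\zeta$. The identity $\alpha(\zeta)v\zeta^{-1}=-v$ therefore simplifies to
\[
\zeta v=-v\zeta \qquad\text{for all $v\in V$.}
\]
This is the only ``new'' input; everything after this step is bookkeeping.

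Next, given $g\in\GPin(V)\smallsetminus\GSpin(V)$, Lemma \ref{lemma:homogeneous} lets me write $g=v_1v_2\cdots v_\ell$ with each $v_i\in V$ anisotropic; moreover $g\in C^-(V)$ forces $\ell$ to be odd. Using the anticommutation relation $v_i\zeta=-\zeta v_i$ established above, I slide $\zeta$ across the product one factor at a time:
\[
g\zeta=v_1\cdots v_\ell\zeta=(-1)^\ell\zeta v_1\cdots v_\ell=(-1)^\ell\zeta g=-\zeta g,
\]
which is the claim.

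There is essentially no real obstacle here, since the assertion reduces to parity bookkeeping once $\alpha(\zeta)=\zeta$ has been noted. The only subtlety worth flagging is precisely that point: the hypothesis $n=2k$ is what makes $\alpha$ act trivially on $\zeta$, so that the identity from Lemma \ref{lemma:shimura_zeta}(a) becomes a clean anticommutation relation inside $C(V)$; in the odd-dimensional case this step would fail and the statement of the lemma would have to be modified accordingly.
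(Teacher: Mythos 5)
Your proof is correct and is essentially the paper's own argument: both deduce $\alpha(\zeta)=\zeta$ from the evenness of $n$, convert Lemma \ref{lemma:shimura_zeta}(a) into the anticommutation $\zeta v=-v\zeta$ for $v\in V$, and then slide $\zeta$ across the odd-length product $g=v_1\cdots v_\ell$ supplied by Lemma \ref{lemma:homogeneous}. No gaps; nothing further is needed.
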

\begin{proof}
By Lemma \ref{lemma:homogeneous}, each $g\in\GPin(V)\smallsetminus\GSpin(V)$ is written as $g=v_1\cdots v_{\ell}$ for some anisotropic vectors $v_1,\dots, v_{\ell}$, where $\ell$ is odd. Since $\dim_FV$ is even, by \eqref{eq:where_zeta_lives} we have $\alpha(\zeta)=\zeta$, where $\alpha$ is as in \eqref{eq:alpha_involution}. Hence for each $v_i$ we have $\zeta v_i\zeta^{-1}=-v_i$ by Lemma \ref{lemma:shimura_zeta} (a). Hence
\[
\zeta g\zeta^{-1}=\zeta v_1\cdots v_{\ell}\zeta^{-1}=(-1)^{\ell}v_1\cdots v_{\ell}=-g.
\]
The lemma is proven.
\end{proof}

It should be emphasized that the element $\zeta$ plays important roles in many parts of this paper.

\subsection{Involution $\sigma_V$}
Let us define an involution $\sigma_V$ on $\GPin(V)$ by
\begin{equation}\label{eq:involuions_sigma_n}
\sigma_V(g)=\begin{cases}g^*&\text{if $n=2k$};\\
\sign(g)^{k+1}g^*&\text{if $n=2k-1$}\end{cases}
\end{equation}
for $g\in\GPin(V)$, where we recall that $g^*$ is the canonical involution defined in \eqref{eq:canonical_involution} and $\sign$ is the sign character as defined in \eqref{eq:sign_character}. In particular, for $n=2k-1$ we have
\[
\sigma_V(g)=\begin{cases}g^*&\text{if $g\in\GSpin(V)$};\\
(-1)^{k+1}g^*&\text{if $g\in\GPin(V)\smallsetminus\GSpin(V)$};\end{cases}
\]
namely $\sigma_V$ is the canonical involution if $k$ is odd and the Clifford involution if $k$ is even.

The important property of the involution $\sigma_V$ that we use in this paper is that $\sigma_V$ preserves the semisimple conjugacy classes of $\GPin(V)$; namely $g$ and $\sigma_V(g)$ are conjugate in $\GPin(V)$ for all semisimple $g\in\GPin(V)$. This will be the main theorem of the next section. Here, let us prove the following.

\begin{lemma}\label{lemma:sigma_V_fixes_center}
For all $z\in Z_{\GPin(V)}$ we have
\[
\sigma_V(z)=z.
\]
\end{lemma}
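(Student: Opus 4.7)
The plan is to verify the lemma by a direct case analysis, broken up according to the description of $Z_{\GPin(V)}$ given in Proposition \ref{prop:center_of_Gpin}. Since the definition of $\sigma_V$ in \eqref{eq:involuions_sigma_n} bifurcates based on the parity of $n$, while the center also has a different shape in even vs.\ odd dimension, the natural split is into two cases, which further reduces to two subcases: the scalars $F^\times$ and the coset $F^\times\zeta$ (which only appears in the odd case).

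First I would dispose of the easy part: $z \in F^\times \subseteq Z^\circ \subseteq \GSpin(V)$. For such $z$, the canonical involution acts trivially because $z$ lies in $C^0(V) = F$, so $z^* = z$; moreover $\sign(z) = 1$. Plugging into \eqref{eq:involuions_sigma_n} immediately gives $\sigma_V(z) = z$ regardless of parity. This already handles $n=2k$ completely (including the $n=2$ case), since Proposition \ref{prop:center_of_Gpin}(a) tells us $Z_{\GPin(V)} = F^\times$ in that case.

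The only remaining case is $n = 2k-1$ with $z \in F^\times \zeta$, so write $z = c\zeta$ for some $c \in F^\times$. Here I would assemble the relevant facts: by Lemma \ref{lemma:shimura_zeta}(b), $\zeta^* = (-1)^{k+1}\zeta$ (since $\tfrac{1}{2}(2k-1)(2k-2) = (2k-1)(k-1) \equiv k-1 \pmod 2$); by \eqref{eq:where_zeta_lives}, $\zeta \in \GPin(V)\smallsetminus\GSpin(V)$ so $\sign(\zeta) = -1$, and hence $\sign(c\zeta) = -1$; and finally $(c\zeta)^* = \zeta^* c^* = c\,\zeta^* = (-1)^{k+1} c\zeta$ since scalars commute and are fixed by $*$. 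Substituting into the odd-case formula
\[
\sigma_V(c\zeta) = \sign(c\zeta)^{k+1}(c\zeta)^* = (-1)^{k+1}\cdot(-1)^{k+1}\, c\zeta = c\zeta,
\]
which is exactly what is needed.

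There is no genuine obstacle here; the proof is a short bookkeeping exercise. The only subtlety worth flagging is that the sign twist $\sign(g)^{k+1}$ in the odd-$n$ definition of $\sigma_V$ is precisely engineered to cancel the $(-1)^{k+1}$ coming from $\zeta^*$, which is the whole reason $\sigma_V$, rather than simply $g^*$, is the correct involution in the odd-dimensional setting.
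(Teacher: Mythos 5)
Your proof is correct and follows essentially the same route as the paper: the scalar case is immediate, and for $n=2k-1$ the sign twist $\sign(\zeta)^{k+1}=(-1)^{k+1}$ cancels the $(-1)^{k+1}$ from $\zeta^*$ given by Lemma \ref{lemma:shimura_zeta}(b). The only (harmless) difference is that you carry the scalar $c$ explicitly, whereas the paper computes $\sigma_V(\zeta)$ alone.
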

\begin{proof}
Clearly if $z\in F^\times=Z^\circ$ then $\sigma_V(z)=z$. Hence the lemma follows if $n=2k$. Assume $n=2k-1$, so that $Z_{\GPin(V)}=F^\times\cup F^\times\zeta$, where $\zeta$ is as in \eqref{eq:shimura_zeta}, namely
\[
\zeta=e_1\cdots e_n,
\]
where $\{e_1,\dots, e_n\}$ is an orthogonal basis. Then
\[
\sigma_V(\zeta)=(-1)^{k+1}\zeta^*=(-1)^{k+1}(-1)^{k+1}\zeta=\zeta
\]
by Lemma \ref{lemma:shimura_zeta}.
\end{proof}

\subsection{Inclusions of $\GPin(W)$ and $\GSpin(W)$}
Let $W\subseteq V$ be a nondegenerate subspace of $V$. We have the natural inclusions
\[
C(W)\subseteq C(V)\qand C^+(W)\subseteq C^+(V).
\]
\begin{prop}\label{prop:GPin_inclusion}
The above inclusions restrict to the following inclusions:
\[
\GPin(W)\subseteq\GPin(V)\qand \GSpin(W)\subseteq\GSpin(V).
\]
\end{prop}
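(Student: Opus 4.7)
The plan is to verify the defining condition $\alpha(g)Vg^{-1}=V$ for an arbitrary $g\in\GPin(W)$, viewed inside $C(V)^\times$ via the natural inclusion $C(W)\subseteq C(V)$. First note that the $\Z/2$-grading is inherited (i.e.\ $C^\pm(W)\subseteq C^\pm(V)$), so the map $\alpha$ of \eqref{eq:alpha_involution} on $C(V)$ restricts to the corresponding map on $C(W)$; there is no ambiguity in writing $\alpha(g)$.

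Given $g\in\GPin(W)$, by Lemma \ref{lemma:homogeneous} applied to $\GPin(W)$ there exist anisotropic vectors $v_1,\dots,v_\ell\in W$ with $g=v_1\cdots v_\ell$, and in particular $g\in C^\ell(V)$, so $\alpha(g)=(-1)^\ell g$. Decompose $V=W\oplus W^\perp$ orthogonally. For $w\in W$, the defining property of $\GPin(W)$ gives $\alpha(g)wg^{-1}\in W\subseteq V$, and this defines a bijection $W\to W$. For $w\in W^\perp$, each $v_i$ is orthogonal to $w$, so by Lemma \ref{lemma:orthogonal_vectors} we have $v_i w=-wv_i$; applying this $\ell$ times yields $gw=(-1)^\ell wg$, whence
\[
\alpha(g)wg^{-1}=(-1)^\ell gwg^{-1}=(-1)^\ell(-1)^\ell w=w.
\]
Thus $\alpha(g)$ acts as the identity on $W^\perp$, and in particular $\alpha(g)W^\perp g^{-1}=W^\perp\subseteq V$. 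Combining both pieces, $\alpha(g)Vg^{-1}\subseteq V$; since $v\mapsto\alpha(g)vg^{-1}$ is $F$-linear and injective on $V$ (as $g$ is invertible in $C(V)$) and $\dim_F V=n$, we conclude $\alpha(g)Vg^{-1}=V$. Hence $g\in\GPin(V)$, proving the first inclusion.

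For the second inclusion, if $g\in\GSpin(W)$, then by Lemma \ref{lemma:homogeneous} applied to $W$, $g\in C^+(W)\subseteq C^+(V)$, and by what we just showed $g\in\GPin(V)$. Then the last sentence of Lemma \ref{lemma:homogeneous} (applied to $V$) gives $g\in\GPin(V)\cap C^+(V)=\GSpin(V)$.

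The only nontrivial point is step where one must control the action of $g$ on vectors in the orthogonal complement $W^\perp$, which is not immediately covered by the $\GPin(W)$ relation. The homogeneity lemma (Lemma \ref{lemma:homogeneous}) together with the anticommutation of orthogonal vectors (Lemma \ref{lemma:orthogonal_vectors}) resolves this cleanly by showing that $\alpha(g)$ acts trivially on $W^\perp$; no subtler obstacle arises.
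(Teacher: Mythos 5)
Your proof is correct and follows essentially the same approach as the paper: decompose $V=W\oplus W^\perp$, handle $W$ by the defining relation for $\GPin(W)$, and handle $W^\perp$ via homogeneity (Lemma \ref{lemma:homogeneous}) together with the anticommutation of orthogonal vectors (Lemma \ref{lemma:orthogonal_vectors}). You are in fact a bit more careful than the paper: you track the sign precisely and show $\alpha(g)wg^{-1}=w$ for $w\in W^\perp$, whereas the paper loosely says ``commute'' where it means anticommute and concludes only $\alpha(g)vg^{-1}=\pm v$, which suffices for the set equality but is less precise; you also spell out the derivation of the $\GSpin$ inclusion from the last part of Lemma \ref{lemma:homogeneous}, which the paper leaves implicit.
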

\begin{proof}
Let $g\in\GPin(W)$, so that $g\in C(W)$ is such that $\alpha(g) W g^{-1}=W$. We need to show $\alpha(g)Vg^{-1}=V$. But $V=W\oplus W^\perp$ and
\[
\alpha(g)(W\oplus W^\perp)g^{-1}=\alpha(g) W g^{-1}\oplus \alpha(g) W^\perp g^{-1}=W\oplus \alpha(g) W^\perp g^{-1}.
\]
Hence it suffices to show $\alpha(g) W^\perp g^{-1}=W^\perp$. To show it, notice that we can write $g=w_1\cdots w_\ell$ for some $w_1,\dots,w_\ell\in W$  by Lemma \ref{lemma:homogeneous}. Then by Lemma \ref{lemma:orthogonal_vectors}, we know that each $v\in W^\perp$ and $w_i\in W$ commute with each other and hence $v$ and $g$ commute, which implies $\alpha(g)vg^{-1}=v\alpha(g)g^{-1}=\pm v$. Hence $\alpha(g) W^\perp g^{-1}=W^\perp$.
\end{proof}

In particular, as a special case, if $V=W\oplus Fe$, where $e$ is anisotropic, which is the situation of our interest in our paper, we have the natural inclusions
\[
\GPin(W)\subseteq\GPin(V)\qand \GSpin(W)\subseteq\GSpin(V).
\]
Note that
\[
\GPin(W)=\GPin(V)_e\qand \GSpin(W)=\GSpin(V)_e,
\]
where $\GPin(V)_e$ is the stabilizer of $e$ in $\GPin(V)$ under the action of $\GPin(V)$ on $V$ via the canonical projection $P:\GPin(V)\to\OO(V)$, and similarly for $\GSpin(V)_e$.

Let us next assume we have an orthogonal sum decomposition
\[
V=V_1\oplus V_2,
\]
where both $V_1$ and $V_2$ are nondegenerate, so that we have both $\GPin(V_1)$ and $\GPin(V_2)$ as subgroups of $\GPin(V)$. The following can be readily verified.
\begin{lemma}\label{lemma:commuting_elements_in_GPin}
For each $g_1\in\GPin(V_1)$ and $g_2\in\GPin(V_2)$,
\[
g_1g_2g_1^{-1}=\begin{cases}g_2&\text{if $g_1\in\GSpin(V_1)$ or $g_2\in\GSpin(V_2)$};\\
-g_2&\text{otherwise};
\end{cases}
\]
namely if at least one of the $g_i$'s is in $\GSpin(V_i)$ then $g_1$ and $g_2$ commute.
\end{lemma}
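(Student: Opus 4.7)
The plan is to reduce the claim to the anticommutation relation for orthogonal vectors in the Clifford algebra (Lemma \ref{lemma:orthogonal_vectors}) via the homogeneity statement (Lemma \ref{lemma:homogeneous}).

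First I would apply Lemma \ref{lemma:homogeneous}, but with the group $\GPin(V_i)$ in place of $\GPin(V)$, to write
\[
g_1=v_1\cdots v_\ell \qand g_2=w_1\cdots w_m,
\]
where each $v_i\in V_1$ and each $w_j\in V_2$ is anisotropic. Since $V_1\perp V_2$, Lemma \ref{lemma:orthogonal_vectors} gives $v_iw_j=-w_jv_i$ for every pair $(i,j)$. Commuting each $w_j$ past each $v_i$ (there are $\ell m$ such transpositions) yields
\[
g_1g_2=(-1)^{\ell m}g_2g_1, \qquad\text{i.e.,}\qquad g_1g_2g_1^{-1}=(-1)^{\ell m}g_2.
\]

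Next I would identify the parity of $\ell m$ with membership in $\GSpin$. By the second assertion of Lemma \ref{lemma:homogeneous}, an element of $\GPin(V_i)$ written as a product of $r$ anisotropic vectors lies in $\GSpin(V_i)$ exactly when $r$ is even (since then it lies in $C^+(V_i)$). Therefore $\ell m$ is even if and only if at least one of $\ell$, $m$ is even, if and only if at least one of $g_1\in\GSpin(V_1)$, $g_2\in\GSpin(V_2)$ holds. In that case $g_1g_2g_1^{-1}=g_2$; otherwise both $\ell$ and $m$ are odd and $g_1g_2g_1^{-1}=-g_2$, which is exactly the stated dichotomy.

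There is no real obstacle here; the only slightly delicate point is to make sure the homogeneity lemma is applied inside $\GPin(V_i)$ rather than $\GPin(V)$, so that the anisotropic vectors really lie in $V_i$ and the anticommutation of Lemma \ref{lemma:orthogonal_vectors} is available. Once that is observed, the computation is a one-line parity count.
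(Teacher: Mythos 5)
Your proof is correct and is exactly the argument the paper intends: the paper defers the details, saying only to argue as in Proposition \ref{prop:GPin_inclusion} using Lemmas \ref{lemma:orthogonal_vectors} and \ref{lemma:homogeneous}, which is precisely the homogeneous-decomposition-plus-anticommutation-parity count you carried out. Your remark that the homogeneity lemma must be applied inside $\GPin(V_i)$ (so the anisotropic vectors lie in $V_i$) is the right point to flag, and it is handled correctly.
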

\begin{proof}
One can prove the lemma, arguing analogously as the proof of Proposition \ref{prop:GPin_inclusion} by using Lemmas \ref{lemma:orthogonal_vectors} and \ref{lemma:homogeneous}. The detail is left to the reader.
\end{proof}

This lemma allows us to make the semidirect product
\[
\GPin(V_1)\ltimes\GPin(V_2)
\]
by letting $\GPin(V_1)$ act on $\GPin(V_2)$ by conjugation. Further the lemma implies that this semidirect product restricts to the direct products
\[
\GPin(V_1)\times\GSpin(V_2)\qand\GSpin(V_1)\times\GPin(V_2);
\]
namely if one of the $\GPin(V_i)$'s is restricted to $\GSpin(V_i)$ then the semidirect product becomes a direct product.
We then have the natural map
\begin{align*}
1\longrightarrow\{(z, z^{-1})\}\longrightarrow&\GPin(V_1)\ltimes\GPin(V_2)\longrightarrow\GPin(V_1\oplus V_2)\\
&\hspace{3.5em}(g_1, g_2)\hspace{3em}\mapsto\qquad g_1g_2,
\end{align*}
where $\{(z, z^{-1})\}$ is the obvious subgroup of the connected component $Z^\circ\times Z^\circ$ of the center of $\GPin(V_1)\ltimes\GPin(V_2)$.
%

\subsection{$\GPin(V)$ as a semidirect product}
To have a better understanding of the group $\GPin(V)$, let us mention the following, though we will not use it for the proof of our main theorems.
\begin{prop}\label{prop:semidirect_product}
Assume $n=2k-1$. One can choose $\zeta\in Z_{\GPin(V)}$ to be such that $\GPin(V)\simeq\GSpin(V)\times \{1, \zeta\}$ if and only if $\disc(V)=1$, where $\disc(V)$ is the discriminant of the quadratic space $V$ as usual.

Assume $n=2k$. If there exists $t\in\GPin(V)\smallsetminus\GSpin(V)$ such that $t^2=1$, then $\GPin(V)\simeq\GSpin(V)\rtimes\{1, t\}$, where the action of $t$ is by conjugation. Such $t$ exists when there exists $v\in V$ such that $q(v)=1$.
\end{prop}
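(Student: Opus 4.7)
The plan is to handle the even and odd cases separately, in both cases trying to exhibit the splitting via an explicit involution: for $n=2k$ an anisotropic vector $v\in V\subseteq C(V)$, and for $n=2k-1$ a suitable scalar multiple of the central element $\zeta=e_1\cdots e_n$ of Lemma \ref{lemma:shimura_zeta}. In each case $\GSpin(V)$ is the identity component of $\GPin(V)$, hence a normal subgroup of index $2$, so the existence of any order-two element $t\in\GPin(V)\smallsetminus\GSpin(V)$ automatically gives $\GPin(V)=\GSpin(V)\rtimes\{1,t\}$ (the product is direct precisely when $t$ is central).

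For the even case $n=2k$, I would simply take $t=v$ for $v\in V$ with $q(v)=1$. Then $v$ is anisotropic so $v\in\GPin(V)$, and $v\in C^1(V)\subseteq C^-(V)$ so by Lemma \ref{lemma:homogeneous} we have $v\notin\GSpin(V)$. The Clifford relation gives $v^2=q(v)=1$, and the general remark above produces the semidirect product decomposition $\GPin(V)\simeq\GSpin(V)\rtimes\{1,v\}$.

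For the odd case $n=2k-1$, Proposition \ref{prop:center_of_Gpin} gives $Z_{\GPin(V)}=F^\times\cup F^\times\zeta$, and Lemma \ref{lemma:shimura_zeta}(b) gives $\zeta^*=(-1)^{k+1}\zeta$. A direct calculation using Lemma \ref{lemma:orthogonal_vectors} to commute the orthogonal basis vectors through one another yields
\[
\zeta\zeta^*=e_1e_2\cdots e_n\,e_n\cdots e_2e_1=\prod_{i=1}^n q(e_i),
\]
so $\zeta^2=(-1)^{k+1}\prod_i q(e_i)$. Any central involution $t\in\GPin(V)\smallsetminus\GSpin(V)$ must have the form $t=c\zeta$ with $c\in F^\times$, and then $t^2=1$ amounts to $c^2=\zeta^{-2}$, which has a solution in $F^\times$ iff $\zeta^2\in(F^\times)^2$, iff the class of $(-1)^{k+1}\prod_i q(e_i)$ in $F^\times/(F^\times)^2$ is trivial. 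Identifying this class with $\disc(V)$ (see remark below) gives the ``only if'' direction: a direct-product decomposition forces a central involution outside $\GSpin(V)$, which by Proposition \ref{prop:center_of_Gpin} must be of the above form. For the ``if'' direction, given $\disc(V)=1$ we pick such a $c$, set $\zeta_{\mathrm{new}}=c\zeta$, which is central with square $1$ and lies outside $\GSpin(V)$, and the general remark above gives the direct product (direct rather than just semidirect because $\zeta_{\mathrm{new}}$ is central).

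The main obstacle, which is otherwise routine, is matching the sign $(-1)^{k+1}$ appearing in $\zeta^2$ with the convention used for $\disc(V)$. One standard convention defines the discriminant of an $n$-dimensional quadratic space to be the class of $(-1)^{n(n-1)/2}\det M$ of its Gram matrix in $F^\times/(F^\times)^2$; for $n=2k-1$ this sign is precisely $(-1)^{k+1}$, and $\det M=\prod_i q(e_i)$ for an orthogonal basis, so $\disc(V)$ coincides with the class of $\zeta^2$ and the statement follows. Under alternative conventions the same argument goes through after absorbing the sign into $c$. I would include a brief remark spelling out the chosen convention so that the equivalence $\zeta^2\in(F^\times)^2\Longleftrightarrow\disc(V)=1$ is transparent.
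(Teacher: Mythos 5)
Your proposal is correct and follows essentially the same route as the paper's proof: for $n=2k-1$, reduce everything to $\zeta^2\equiv\disc(V)\pmod{F^{\times 2}}$ and rescale $\zeta$; for $n=2k$, take $t=v$ with $q(v)=1$. The paper simply asserts ``$\zeta^2=\disc(V)$ viewed modulo $F^{\times 2}$'' as an easy exercise and leaves the isomorphism verification implicit, whereas you spell out the computation $\zeta\zeta^*=\prod_i q(e_i)$ together with Lemma \ref{lemma:shimura_zeta}(b), make explicit that the signed-discriminant convention $\disc(V)=(-1)^{n(n-1)/2}\det M$ is what makes the signs match, and handle the ``only if'' direction by noting via Proposition \ref{prop:center_of_Gpin} that any central involution outside $\GSpin(V)$ must be a scalar multiple of $\zeta$ — details the paper omits but which are worth recording.
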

\begin{proof}
Assume $n=2k-1$. It is an easy exercise to show that $\zeta^2=\disc(V)$ viewed modulo $F^{\times 2}$. Hence one can find $\zeta$ such that $\zeta^2=1$ if and only if $\disc(V)=1$. Now, choose such $\zeta$, so that the direct product $\GSpin(V)\times \{1, \zeta\}$ makes sense. It is then easy to see that the map $\GSpin(V)\times \{1, \zeta\}\to\GPin(V)$, $(g, \epsilon)\mapsto g\epsilon$, is an isomorphism.

Assume $n=2k$. Assume such $t$ exists, so that the semidirect product $\GSpin(V)\rtimes\{1, t\}$ makes sense. It is then easy to see that the map $\GSpin(V)\rtimes \{1, \zeta\}\to\GPin(V)$, $(g, \epsilon)\mapsto g\epsilon$, is an isomorphism. If there exists $v\in V$ such that $q(v)=1$ then one can simply set $t=v$.
\end{proof}

\subsection{The groups $\Pin(V)$ and $\Spin(V)$}
As the last thing in this section, let us mention how the groups $\GPin(V)$ and $\GSpin(V)$ are related to the more familiar $\Pin(V)$ and $\Spin(V)$. (Though we do not need the groups $\Pin(V)$ and $\Spin(V)$ for our purposes, we introduce them to justify our terminology ``general Pin group".) First, we have the Clifford norm
\[
\GPin(V)\longrightarrow F^\times,\quad g\mapsto N(g)=g\bar{g}.
\]
The Clifford norm descends to
\[
\OO(V)\longrightarrow F^\times\slash F^{\times 2}
\]
because $N(z)\in F^{\times 2}$ for $z\in Z^\circ=F^\times$ by \eqref{eq:Norm_on_Z}, which is called the spinor norm.

Now, by definition
\[
\Pin(V):=\ker(N:\GPin(V)\to F^\times),
\]
which is called the Pin group.\footnote{According to \cite[p.3]{ABS}, the term Pin was coined by J-P.\ Serre as a joke, though we do not know exactly what the joke was.} Via the projection $P:\GPin(V)\to\OO(V)$, we have the map
\[
1\longrightarrow \{\pm 1\}\longrightarrow \Pin(V)\xrightarrow{\;P\;} \OO(V).
\]
Note that the map $\Pin(V)\to\OO(V)$ is not necessarily surjective for the $F$-rational points, though it is surjective as a morphism of group schemes. Hence we have the
following commutative diagram of group schemes
\[
\begin{tikzcd}
1 \arrow[r] & \GL_1\arrow[r]&\GPin(V)\arrow[r,"P"]& \OO(V) \arrow[r] &1\\
1 \arrow[r] & \{\pm 1\} \ar[u, symbol={\subseteq}] \arrow[r]&\Pin (V)\ar[u, symbol={\subseteq}]\arrow[r,"P"]& \OO(V)\ar[u, symbol={=}] \arrow[r] &1\rlap{\, ,}
\end{tikzcd}
\]
where the bottom row is not necessarily exact for the $F$-rational points.

The Spin group is defined by
\[
\Spin(V):=\GSpin(V)\cap \Pin(V).
\]
Analogously to $\Pin(V)$ and $\GPin(V)$, we have the commutative diagram of group schemes
\[
\begin{tikzcd}
1 \arrow[r] & \GL_1\arrow[r]&\GSpin(V)\arrow[r,"P"]& \SO(V) \arrow[r] &1\\
1 \arrow[r] & \{\pm 1\} \ar[u, symbol={\subseteq}] \arrow[r]&\Spin (V)\ar[u, symbol={\subseteq}]\arrow[r,"P"]& \SO(V)\ar[u, symbol={=}] \arrow[r] &1\rlap{\, ,}
\end{tikzcd}
\]
where the bottom row is not necessarily exact for the $F$-rational points.

We hope that this discussion justifies our terminology ``general Pin group" and ``general Spin group".

Now, let $W\subseteq V$ be a nondegenerate subspace of codimension one. Analogously to the $\GPin$ and $\GSpin$ cases, there are natural inclusions $\Pin(W)\subseteq\Pin(V)$ and $\Spin(W)\subseteq\Spin(V)$. Hence one could certainly ask the multiplicity question for the pairs
\[
(G, H)=(\Pin(V), \Pin(W))\qand (\Spin(V), \Spin(W)),
\]
namely whether or not $\dim_{\C}\Hom_H(\pi, \tau)\leq 1$ for $\pi\in\Irr(G)$ and $\tau\in\Irr(H)$. However, our method of proof in this paper does not apply to these cases. This is because we will crucially use the fact that the canonical projections $P:\GPin(V)\to \OO(V)$ and $P:\GSpin(V)\to\SO(V)$ are surjective for the $F$-rational points.

\begin{rem}
The referee kindly pointed out that the multiplicity-at-most-one indeed fails for the Pin and Spin cases, though the authors do not know about it.
\end{rem}


\section{On semisimple elements}


In this section, we prove that each semisimple $g\in\GPin(V)$ is conjugate to $\sigma_V(g)$ in $\GPin(V)$; namely there exists $\eta\in\GPin(V)$ such that
\[
\eta \sigma_V(g)\eta^{-1}=g.
\]

\subsection{Basic idea}
The proof of the theorem requires the following $\OO(V)$ analogue due to Moeglin-Vigneras-Waldspurger (MVW).
\begin{lemma}\label{lemma:MVW_orthogonal_group}
For each (not necessarily semisimple) $h\in\OO(V)$, there exists $\beta\in\OO(V)$ such that
\[
\beta h^{-1}\beta^{-1}=h;
\]
namely $h$ and $h^{-1}$ are conjugate in $\OO(V)$. If $n=2k-1$ then the same holds for all $h\in\SO(V)$ because $\OO(V)=\SO(V)\times\{\pm1\}$ (direct product).
\end{lemma}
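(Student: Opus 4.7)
The lemma is the classical Moeglin--Vigneras--Waldspurger statement for $\OO(V)$, and the plan is to prove it by an orthogonal reduction to indecomposable blocks followed by an explicit construction on each block.

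First I would decompose $V$ with respect to $h$. Let $m(X)\in F[X]$ be the minimal polynomial of $h$; since $h$ is invertible, $m(0)\neq 0$, and for each irreducible factor $p$ of $m$ the \emph{reciprocal polynomial}
\[
p^{*}(X)\;:=\;p(0)^{-1}\,X^{\deg p}\,p(X^{-1})
\]
is again irreducible. Writing $V_p:=\ker p(h)^{e_p}$ (with $e_p$ the multiplicity of $p$ in $m$) for the primary component, the orthogonality relation $\langle hv,hw\rangle=\langle v,w\rangle$ forces $V_p\perp V_q$ whenever $q\neq p^{*}$ and, in the case $p\neq p^{*}$, forces the pairing $V_p\times V_{p^{*}}\to F$ to be a perfect duality. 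One then obtains an orthogonal decomposition $V=\bigoplus_j V_j$ into $h$-stable nondegenerate summands of two types:
\begin{enumerate}
\item[(a)] \emph{hyperbolic:} $V_j=V_p\oplus V_{p^{*}}$ with $p\ne p^{*}$, the two factors being totally isotropic and in perfect duality under $\langle-,-\rangle$;
\item[(b)] \emph{self-reciprocal:} $V_j=V_p$ with $p=p^{*}$.
\end{enumerate}

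Next, I would construct $\beta_j\in\OO(V_j)$ with $\beta_j\, h|_{V_j}\, \beta_j^{-1}=h|_{V_j}^{-1}$ on each block and assemble $\beta:=\bigoplus_j\beta_j\in\OO(V)$. For a type (a) block, the perfect pairing together with the fact that $h$ acts on $V_p$ with the same invariant factors with which $h^{-1}$ acts on $V_{p^{*}}$ produces an $F[X]$-linear isomorphism $\psi:V_p\to V_{p^{*}}$ with $\psi\circ h|_{V_p}=h^{-1}|_{V_{p^{*}}}\circ\psi$ and $\langle\psi(v),w\rangle=\langle v,\psi^{-1}(w)\rangle$. The formula $\beta_j(v+w):=\psi^{-1}(w)+\psi(v)$ then defines an element of $\OO(V_j)$ with the required intertwining property.

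The main obstacle is blocks of type (b), where one works inside a single self-reciprocal primary component. Here I would first use elementary-divisor theory to conclude that $h|_{V_j}$ is conjugate to $h|_{V_j}^{-1}$ in $\GL(V_j)$, and then upgrade this conjugacy to $\OO(V_j)$ by splitting cases on $p$. When $p\in\{X-1,X+1\}$ the element $h|_{V_j}$ is, up to a sign, a unipotent orthogonal transformation and the classical Wall normal form provides an involution in $\OO(V_j)$ realizing the inversion; when $p\notin\{X-1,X+1\}$ the polynomial $p$ has even degree and the companion-matrix decomposition places $h|_{V_j}$ inside an anisotropic torus of $\OO(V_j)$ in which inversion is inner. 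Assembling the $\beta_j$'s yields the desired $\beta\in\OO(V)$.

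The final assertion for $n=2k-1$ is then automatic: since $n$ is odd, $-I\in\OO(V)$ has determinant $-1$ and $\OO(V)=\SO(V)\times\{\pm I\}$, so replacing $\beta$ by $-\beta$ if necessary produces an element of $\SO(V)$ without affecting the conjugation relation.
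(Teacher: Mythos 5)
Your overall strategy (primary decomposition of $V$ under $F[h]$ with the involution $X\mapsto X^{-1}$, hyperbolic blocks $V_p\oplus V_{p^*}$ versus self-reciprocal blocks, then an explicit $\beta_j$ on each block) is the standard MVW-style argument; note that the paper itself does not prove this lemma but simply cites \cite[I.2 Proposition, p.79]{MVW}, and only later reproduces the argument in the special case of semisimple $h$, where the centralizer structure reduces everything to general linear, unitary and $\pm 1$ blocks. Your type (a) construction is essentially right, although the displayed compatibility $\la\psi(v),w\ra=\la v,\psi^{-1}(w)\ra$ does not typecheck as written (the correct requirement is that $B(v,v'):=\la\psi(v),v'\ra$ be a symmetric form on $V_p$ for which $h|_{V_p}$ is self-adjoint), and the existence of such a $\psi$ is not automatic from equality of invariant factors: it uses the classical fact that every endomorphism is self-adjoint with respect to some nondegenerate symmetric bilinear form (equivalently, every matrix is similar to its transpose via a symmetric matrix), which you should cite or prove via the trace-form construction on cyclic blocks.

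The genuine gap is in your type (b) case with $p\notin\{X-1,X+1\}$. There you assert that $h|_{V_j}$ lies in an anisotropic torus of $\OO(V_j)$; this is only possible when $h|_{V_j}$ is semisimple, i.e.\ when $e_p=1$. For $e_p>1$ the restriction $h|_{V_j}$ has a nontrivial unipotent part and lies in no torus whatsoever, so the torus argument says nothing about it --- and this is exactly the ``not necessarily semisimple'' content that the lemma insists on (you did allow a unipotent part in the $X\pm1$ blocks via Wall's normal form, but not here). To close the gap you must either invoke Wall's classification of \emph{all} conjugacy classes in $\OO(V)$ (not just the unipotent ones) and check that $g$ and $g^{-1}$ have the same invariants, or argue as in MVW by viewing $V_j$ as a module over the local algebra $F[X]/(p^{e_p})$ equipped with the involution induced by $X\mapsto X^{-1}$, transferring the quadratic form to a Hermitian form over that algebra, and producing a semilinear isometry that conjugates the action of $X$ to that of $X^{-1}$. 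A smaller point in the same sentence: inversion is never an inner automorphism of the torus itself; what you actually need (and what is true here) is that inversion on such a torus is realized by conjugation by an element of $\OO(V_j)$, namely the Galois-conjugation element fixing an orthogonal basis, which is exactly the $\beta$ the paper uses in its semisimple discussion.
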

\begin{proof}
This is \cite[I.2 Proposition, p.79]{MVW}.
\end{proof}

Since $\sigma_V(g)=\pm N(g)g^{-1}$, where $N(g)$ is the Clifford norm, we have $P(\sigma_V(g))=P(g)^{-1}$, where $P:\GPin(V)\to\OO(V)$ is the canonical projection. Hence, by this lemma, we know that $P(g)$ and $P(\sigma_V(g))$ are conjugate in $\OO(V)$; namely there exists $\beta\in\OO(V)$ such that $\beta P(\sigma_V(g))\beta^{-1}=P(g)$. Let $\eta\in\GPin(V)$ be any element such that $P(\eta)=\beta$. Then we have
\[
\eta\sigma_V(g)\eta^{-1}=zg
\]
for some $z\in Z^\circ$. By applying the Clifford norm $N$ to both sides and using $N(\sigma_V(g))=N(g)$, we have $z^2=1$ by \eqref{eq:Norm_on_Z}, which gives
\begin{equation}\label{eq:up_to_sign}
\eta\sigma_V(g)\eta^{-1}=\pm g.
\end{equation}
Namely, $\sigma_V(g)$ and $g$ are conjugate ``up to $\pm 1$". In what follows, we will show the sign $\pm$ is indeed $+$ for semisimple $g$.

\subsection{Proof of MVW}
To show the sign in \eqref{eq:up_to_sign} is indeed $+$, we need to analyze the proof of the above lemma (Lemma \ref{lemma:MVW_orthogonal_group}) at least for semisimple $h\in\OO(V)$. So in this subsection we reproduce the proof of Lemma \ref{lemma:MVW_orthogonal_group} for a fixed semisimple $h\in\OO(V)$. The basic step is to compute the centralizer $\OO(V)_h$ of $h$ in $\OO(V)$, which is an old result by Steinberg.

\begin{prop}\label{prop:centralizer_of_h}
Let $h\in\OO(V)$ be semisimple. Then we have the following.
\begin{enumerate}[(1)]
\item There is an orthogonal sum decomposition
\[
V=V_1\oplus\cdots\oplus V_m\oplus V_+\oplus V_-,
\]
where each of $V_i$ and $V_{\pm}$ is an $h$-invariant subspace of the following type:
\begin{enumerate}[(a)]
\item $V_i=X_i\oplus X_i^*$, where $X_i$ is a vector space over a finite extension $A_i$ of $F$ and $X_i^*$ is the dual of $X_i$, and both $X_i$ and $X_i^*$ are $h$-invariant;
\item $V_i$ is a vector space over a finite extension $A_i$ of $F$ equipped with a (nondegenerate) Hermitian form $\lla-,-\rra_i$ with respect to an involution on $A_i$ which fixes $F$ pointwise;
\item Each of $V_{\pm}$ is a nondegenerate subspace of $V$, on which $h$ acts on $V_{\pm}$ as $\pm 1$, respectively.
\end{enumerate}
\item There is an isomorphism
\begin{equation}\label{eq:centralizer_isomorphism_O(V)}
\OO(V)_h\simeq G_1\times\cdots\times G_m\times\OO(V_+)\times\OO(V_-),
\end{equation}
where
\[
G_i=\begin{cases}\GL_{A_i}(X_i),&\text{if $V_i=X_i\oplus X_i^*$};\\ U_{A_i}(V_i),&\text{otherwise}.\end{cases}
\]
\end{enumerate}
Note in the above that some of the spaces $V_1\oplus\cdots \oplus V_m$, $V_+$ and $V_-$ could be zero.
\end{prop}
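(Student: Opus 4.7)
The plan is to derive both assertions from the primary decomposition of $h$ on $V$, organized by how the irreducible factors of the minimal polynomial of $h$ behave under the reciprocal involution $q(x) \mapsto \tilde{q}(x) := x^{\deg q} q(1/x)/q(0)$.

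Write the minimal polynomial as $p(x) = \prod_j p_j(x)$, a product of distinct monic irreducible $F$-factors (distinct because $h$ is semisimple, and with $p_j(0) \neq 0$ since $h$ is invertible). Since $h$ has adjoint $h^{-1}$ with respect to $\langle -, -\rangle$, one has $p(h^{-1})=0$, from which a short manipulation shows that $q \mapsto \tilde{q}$ permutes $\{p_j\}$. Set $V^{(j)} := \ker p_j(h)$, so $V = \bigoplus_j V^{(j)}$. The key orthogonality step is: a direct calculation using $p_i(h)v = 0$ and $\langle q(h) v, w\rangle = \langle v, q(h^{-1}) w\rangle$ shows that $V^{(i)} \perp V^{(j)}$ whenever $\tilde{p}_i \neq p_j$. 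Regrouping the summands gives: \emph{type (a)} blocks $V^{(i)} \oplus V^{(\tilde{i})}$ when $p_i \neq \tilde{p}_i$, with both summands totally isotropic and dually paired; \emph{type (b)} blocks $V^{(i)}$ when $p_i = \tilde{p}_i$ is of degree $>1$, which are automatically nondegenerate; and the self-reciprocal linear factors $x \mp 1$ contributing $V_\pm$.

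On a type (a) block, $X_i := V^{(i)}$ is naturally a vector space over $A_i := F[x]/(p_i(x))$ with $h$ acting as multiplication by the image of $x$, and the pairing $\langle -, -\rangle$ identifies $V^{(\tilde{i})}$ with the $A_i$-dual $X_i^*$. On a type (b) block, $V_i := V^{(i)}$ is an $A_i$-vector space on which $h$ acts by multiplication by a generator; the involution $x \mapsto x^{-1}$ on $A_i$ fixes a subfield $A_i' \supseteq F$ of index $2$, and the $F$-bilinear form on $V_i$ lifts uniquely to a nondegenerate $A_i/A_i'$-Hermitian form $\lla -, -\rra_i$. Any $g \in \OO(V)_h$ must preserve each $V^{(j)}$, hence each block. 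On a type (a) block, commutation with $h$ forces $g|_{X_i}$ to be $A_i$-linear, and the duality then determines $g|_{X_i^*}$ as its inverse adjoint, producing exactly $\GL_{A_i}(X_i)$. On a type (b) block, $g|_{V_i}$ is $A_i$-linear and preserves $\lla -, -\rra_i$, hence lies in $U_{A_i}(V_i)$. On $V_\pm$ the centralizing condition is vacuous, yielding $\OO(V_\pm)$. Taking the product produces the isomorphism in \eqref{eq:centralizer_isomorphism_O(V)}.

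The main obstacle is the careful bookkeeping in the middle step: one must verify that the lifted Hermitian form on each type (b) block is well defined, nondegenerate, and Hermitian with respect to an involution fixing $F$ (not merely a bigger subfield), and that the duality pairing on a type (a) block really realizes the $A_i$-module duality. These checks are clean but slightly delicate because the forms begin life as $F$-bilinear and the $A_i$-structure enters only through $h$. Once they are in place, the centralizer description reduces to Schur-type arguments applied to each $h$-isotypic component.
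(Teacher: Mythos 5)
Your proposal is correct and follows essentially the same route as the paper's Appendix A proof: decompose $V$ into primary components $\ker p_j(h)$, observe that the reciprocal involution (equivalently, $x \mapsto x^{-1}$ on $A = F[x]/(p(x))$, which is how the paper phrases it) permutes the irreducible factors, group into dual-pair blocks, self-reciprocal Hermitian blocks, and the $\pm 1$-eigenspaces, then lift $\langle-,-\rangle$ to $A_i$-valued pairings via the trace form and compute the centralizer blockwise. The only presentational difference is that you work directly with the polynomial-level involution $q \mapsto \tilde q$ rather than introducing the étale algebra $A$ with its involution $\sigma$, but the bookkeeping — including the trace-form lift and the Schur-type argument for the centralizer — is the same.
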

\begin{proof}
This has been known for decades and is cited in, say, \cite[bottom of p.315]{Wal12}. But since we have not been able to locate any reference with a complete proof for this precise form, we reproduce the proof in Appendix \ref{Appendix_A}.
\end{proof}

\begin{rem}
Let us make a couple of remarks about this proposition. First, if $V_i=X_i\oplus X_i^*$ then the group $\GL_{A_i}(X_i)$ should be rather viewed as the diagonal $\{(g, {g^*}^{-1})\in\GL_{A_i}(X_i)\times\GL_{A_i}(X_i^*)\}$, where $g^*$ is the adjoint of $g$ with respect to the canonical pairing. Second, $\dim_F(V_1\oplus\cdots\oplus V_m)$ is always even, and hence if $\dim_FV$ is odd then $V_+\oplus V_-\neq 0$.
\end{rem}

Let us explicate each of the cases (a), (b) and (c) in (1) of the above proposition. To ease the notation, we drop the subscript $i$.

Assume we are in (a), namely $G=\GL_{A}(X)$. Then $G$ is in the Siegel Levi $\GL_F(X)$ of the split even orthogonal group $\SO(X\oplus X^*)$, which is naturally a subgroup of $\SO(V)$. Then $\GL_{A}(X)$ is embedded in $\SO(X\oplus X^*)$ via
\[
\GL_{A}(X)\longrightarrow\GL_F(X)\times\GL_F(X^*),\quad g\mapsto (g, {g^*}^{-1}),
\]
where $g^*$ is the adjoint of $g$ with respect to the canonical pairing. The image of $h$ in $G$ is in the center $A$ of $\GL_A(X)$. Hence $h$ is actually $(h, h^{-1})$, since $h^*=h$ for the central $h$, and hence $h^{-1}$ is actually $(h^{-1}, h)$.

Now fix an $A$-basis $\{e_1,\dots, e_n\}$ of $X$ and its dual basis $\{e^*_1,\dots, e^*_n\}$ of $X^*$, so that $\la e_i, e_j^*\ra=\delta_{ij}$. Define
\[
u:X^*\longrightarrow X,\quad e_i^*\mapsto e_i,
\]
and set
\[
\beta=\begin{pmatrix}& u\\u^{-1}&\end{pmatrix},
\]
where the matrix is with respect to the fixed basis. Then $\beta\in\OO(X\oplus X^*)$, and moreover
\[
\beta(h, h^{-1})\beta^{-1}=(h^{-1}, h).
\]

Note that we have
\[
{\det}_F(\beta)=(-1)^{\frac{1}{2}\dim_F V},
\]
where ${\det}_F(\beta)$ is the determinant of $\beta$ viewed as an $F$-linear map. This can be computed as follows:
\begin{align*}
{\det}_F(\beta)&=N_{A\slash F}({\det}_{A}(\beta))\\
&=N_{A\slash F}((-1)^n)\\
&=(-1)^{n\dim_FA}\\
&=(-1)^{\frac{1}{2}\dim_F V}.
\end{align*}

It should be noted that we have the involution
\[
\GL_A(X)\longrightarrow\GL_A(X),\quad g\mapsto ug^*u^{-1}.
\]
One can see that this involution is $g\mapsto g^t$, where $g^t$ is the transpose of $g$ with respect to the fixed basis, and $h$, viewed as an element in $\GL_A(X)$, is a fixed point of the transpose map because $h$ is in the center $A$ of $\GL_A(X)$.

Next assume we are in (b); namely the space $V$ is equipped with a Hermitian bilinear form $\lla-,-\rra$ over a  quadratic extension $A/A'$ with $F\subseteq A'$ and $G=U_A(V)$ a unitary group. Further we have $U_A(V)\subseteq\SO(V)$. Here, since the unitary group is connected, it is in the special orthogonal group $\SO(V)$. Now let $e_1,\dots, e_n$ be an orthogonal $A$-basis of $V$ with respect to the Hermitian form $\lla-,-\rra$. Define
\[
\beta:V\longrightarrow V
\]
by
\[
\beta(a_1e_1+\cdots+a_ne_n)=\bar{a}_1e_1+\cdots+\bar{a}_ne_n,
\]
where the bar is the Galois conjugation for the quadratic extension $A/A'$. Since the image of our $h$ in $G$ is in the center of the unitary group $U_A(V)$, we have
\[
\beta h^{-1}\beta^{-1}=h.
\]
Moreover, since the vectors $e_1,\dots, e_n$ are also orthogonal with respect to the quadratic form $\la-,-\ra$ on $V$, we know that $\beta\in\OO(V)$.

Note that
\[
{\det}_F(\beta)=(-1)^{\frac{1}{2}\dim_F V},
\]
whose proof is essentially the same as the $\GL$-case.

Also note that we have
\[
\lla \beta v, \beta v'\rra=\lla v', v\rra
\]
for all $v, v'\in V$ and $\beta^2=1$. Hence for each $g\in U_A(V)$ we have $\beta g^{-1}\beta^{-1}\in U_A(V)$, and thus we have the involution
\[
U_A(V)\longrightarrow U_A(V),\quad g\mapsto \beta g^{-1}\beta^{-1},
\]
and $h$ is a fixed point of this involution.

Finally, assume we are in (c), so that we have $V=V_+\oplus V_-$. In this case, our $h$ is simply $(1, -1)\in\OO(V_+)\times\OO(V_-)$, and hence $h^{-1}=h$. So we can take
\[
\beta=(\beta_+, \beta_-)\in\OO(V_+)\times\OO(V_-),
\]
where $\beta_+\in\OO(V_+)$ and $\beta_-\in\OO(V_-)$ are arbitrary.

Now, we can glue together all the three cases. Namely, if $h\in\OO(V)$ is such that
\[
\OO(V)_h\simeq G_1\times\cdots\times G_m\times\OO(V_+)\times\OO(V_-),
\]
then we can write
\[
h=(h_1,\dots, h_m, h_+, h_-)\in G_1\times\cdots\times G_m\times\OO(V_+)\times\OO(V_-),
\]
where
\[
G_i\subseteq\SO(V_i)
\]
for $i=1,\dots, m$. For each $G_i$ let $\beta_i\in\OO(V_i)$ be the corresponding element constructed above, so that
\[
\beta_ih_i^{-1}\beta_i^{-1}=h_i.
\]
Then if we set
\[
\beta=(\beta_1,\dots,\beta_m, \beta_+, \beta_-),
\]
we have the desired
\[
\beta h^{-1}\beta^{-1}=h.
\]
Lemma \ref{lemma:MVW_orthogonal_group} is proven.

\quad

Let $\dim_FV_i=2k_i$, $\dim_FV_+=2k_+$ or $2k_+-1$, and $\dim_FV_-=2k_-$ or $2k_--1$. (Here note that the ``non-orthogonal space" $V_i$ in the decomposition $V=V_1\oplus\cdots\oplus V_m\oplus V_+\oplus V_-$ is either $X_i\oplus X_i^*$ or Hermitian, and hence $\dim_FV_i$ is always even.) Then we have
\[
{\det}_F(\beta)=(-1)^{k_1+\cdots+k_m}\det(\beta_+)\det(\beta_-).
\]
Since $\beta_+$ and $\beta_-$ can be chosen arbitrarily, we can choose $\beta$ so that
\[
{\det}_F(\beta)=(-1)^{k}
\]
by choosing $\beta_+$ and $\beta_-$ appropriately.


\subsection{Lifting to $\GPin$}
Let $g\in\GPin(V)$, and set
\[
h:=P(g).
\]
Let $\beta$ be as above for this $h$, so $\beta h^{-1}\beta^{-1}=h$. We would like to ``lift" $\beta$ to $\GPin(V)$. For this purpose, we fix
\[
\eta\in\GPin(V)\quad\text{such that}\quad P(\eta)=\beta.
\]
Each such $\eta$ differs by an element in $Z^\circ$; namely if $\eta'$ is another such choice then $\eta'=z\eta$ for some $z\in Z^\circ$.
So far we know
\[
\eta\sigma_V(g)\eta^{-1}=\pm g.
\]
To show the sign $\pm$ is actually $+$, we consider separate cases accordingly as the structure of the centralizer $\OO(V)_h$.

\subsection{Without orthogonal factor}
Assume that $h$ is such that $\OO(V)_h$ has no orthogonal group $\OO(V_+)\times\OO(V_-)$, so that $\OO(V)_h$ is a product of (restrictions of scalar of) general linear groups and unitary groups. Note that in this case $\dim_FV$ is necessarily even.
\begin{prop}\label{prop:semisimple_conjugate_without_orthogonal_factor}
Let $g\in\GPin(V)$ be semisimple. Let $h=P(g)\in\OO(V)$ and let $\beta\in\OO(V)$ be such that $\beta h^{-1}\beta^{-1}=h$. Assume $h$ is such that $\OO(V)_h$ has no factor of the orthogonal groups $\OO(V_+)\times\OO(V_-)$. Then $g\in\GSpin(V)$, and
\[
\eta\sigma_V(g)\eta^{-1}=g
\]
where $\eta$ is any element in $\GPin(V)$ such that $P(\eta)=\beta$.
\end{prop}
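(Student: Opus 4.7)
The strategy is to first verify $g\in\GSpin(V)$ and then pin down the sign in \eqref{eq:up_to_sign} by reducing to an explicit Clifford-algebra computation on each irreducible piece of the centralizer decomposition.

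\textbf{Step 1: $g\in\GSpin(V)$.} Since the hypothesis excludes the orthogonal factor, Proposition~\ref{prop:centralizer_of_h} gives $V=V_1\oplus\cdots\oplus V_m$ with $\OO(V)_h\simeq G_1\times\cdots\times G_m$, where each $G_i$ is either $\GL_{A_i}(X_i)$ or $U_{A_i}(V_i)$. Both possibilities are connected, so $G_i\subseteq\SO(V_i)\subseteq\SO(V)$. As $h$ is central in $\OO(V)_h$, each component $h_i$ lies in the center of $G_i\subseteq\SO(V_i)$; hence $h\in\SO(V)$ and $g\in P^{-1}(\SO(V))=\GSpin(V)$. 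Consequently $\sigma_V(g)=g^*$ regardless of the parity of $n$.

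\textbf{Step 2: reduction to a single factor.} The conjugation $\eta\sigma_V(g)\eta^{-1}$ depends only on $P(\eta)=\beta$, so we may take $\eta=\eta_1\cdots\eta_m$ with $\eta_i\in\GPin(V_i)$ lifting $\beta_i$, using the semidirect-product map recalled earlier. Likewise, write $g=zg_1\cdots g_m$ with $z\in Z^{\circ}$ and $g_i\in\GSpin(V_i)$ a lift of $h_i$. Because each $g_i^*\in\GSpin(V_i)$ lies in the connected component, Lemma~\ref{lemma:commuting_elements_in_GPin} asserts that $g_i^*$ commutes with $\eta_j$ for $j\neq i$ and with $g_j^*$ for $j\neq i$. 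Sliding each $g_i^*$ through the surrounding $\eta_j$'s yields
\[
\eta\sigma_V(g)\eta^{-1}=z\prod_{i=1}^m\bigl(\eta_ig_i^*\eta_i^{-1}\bigr),
\]
the product being unambiguous since every $\eta_ig_i^*\eta_i^{-1}$ lies in $\GSpin(V_i)$. It thus suffices to establish $\eta_ig_i^*\eta_i^{-1}=g_i$ within each irreducible factor.

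\textbf{Step 3: the irreducible cases.} In case (a) with $A_i=F$ and the elementary building block $V=Fe\oplus Fe^*$ satisfying $\la e,e^*\ra=1$ and $q(e)=q(e^*)=0$, one has the explicit Clifford lifts
\[
g=1+\frac{\lambda-1}{2}ee^*\quad\text{and}\quad\eta=e-e^*,
\]
with $\eta^2=-2$. Using only $ee^*+e^*e=2$ and $e^2=(e^*)^2=0$, a direct computation gives $g^*=\lambda-\frac{\lambda-1}{2}ee^*$ and then $\eta g^*\eta^{-1}=g$. General case (a) with $A_i=F$ follows by orthogonally decomposing $V_i=X_i\oplus X_i^*$ into hyperbolic pairs and reapplying the reduction of Step~2 internally, while case (a) over a nontrivial extension $A_i$ is handled by choosing an $F$-basis of $X_i$ adapted to the $A_i$-structure and performing the analogous calculation. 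Case (b) is parallel: using an orthogonal Hermitian basis of $V_i$ over $A_i$, the central scalar $\mu\in A_i^{\times}$ with $\mu\bar\mu=1$ has a Clifford lift of the same shape, and $\beta_i$, Galois conjugation with respect to the chosen basis, has an explicit lift to $\GPin(V_i)$ built from anisotropic vectors in the $(-1)$-eigenspace of the Galois action.

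\textbf{Principal obstacle.} The technical heart of the argument is the Clifford-algebra bookkeeping in Step~3: the canonical involution reverses a length-$\ell$ monomial and contributes a sign $(-1)^{\ell(\ell-1)/2}$, while Lemma~\ref{lemma:orthogonal_vectors} gives a sign $-1$ every time two distinct orthogonal anisotropic vectors are interchanged. Confirming that these signs cancel so as to give $+g_i$ rather than $-g_i$ requires careful tracking, and this cancellation ultimately uses that the hypothesis forces every $V_i$ to be even-dimensional—either as $X_i\oplus X_i^*$ or as the restriction of scalars of a Hermitian space—which is precisely how the exclusion of the orthogonal factor enters beyond Step~1.
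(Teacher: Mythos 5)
Your overall strategy---explicit Clifford-algebra computation on each irreducible factor of the centralizer decomposition---is genuinely different from the paper's argument, and in principle it could work, but as written it has a real gap.

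Your Step~1 is correct and essentially matches the paper. Step~2 (commuting $g_i^*$ past the $\eta_j$'s using Lemma~\ref{lemma:commuting_elements_in_GPin}) is sound, and your preliminary reduction to a convenient choice of $\eta$ is valid because any two lifts of $\beta$ differ by an element of $Z^\circ$, which is central and fixed by $\sigma_V$. Your elementary hyperbolic-pair computation in Step~3 checks out: with $g=1+\tfrac{\lambda-1}{2}ee^*$ and $\eta=e-e^*$, one indeed gets $\eta g^*\eta^{-1}=g$. However, the remaining cases in Step~3 are only asserted, not proven. For $A_i=F$ with $\dim_{A_i}X_i>1$ you would need to lift $h$ and $\beta$ as products over hyperbolic pairs and then track the anticommutation of the (odd) $\eta_j$'s and the signs coming from $\zeta^*=(-1)^{k(k-1)/2}\zeta$; for $A_i\supsetneq F$ the central element $\lambda I_{X_i}$ has no such simple rank-one Clifford lift in terms of $F$-basis vectors; and the unitary case requires constructing an explicit Clifford lift of the Galois-conjugation map $\beta_i$, which you only gesture at. Your ``Principal obstacle'' paragraph frankly concedes that verifying the net sign is unfinished, and since the whole point of this proposition is to eliminate the sign ambiguity in~\eqref{eq:up_to_sign}, the argument cannot be considered complete until that bookkeeping is carried out.

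The paper's proof sidesteps all of this with a short connectedness argument: because the hypothesis forces $\OO(V)_h$ to be a product of general linear and unitary groups, its center $C$ is an algebraic torus, so $P^{-1}(C)$ is connected; the map $\gamma\mapsto\eta\sigma_V(\gamma)\eta^{-1}\gamma^{-1}$ is then a morphism of varieties from a connected space to $\{\pm1\}$ taking the value $1$ at the identity, so it is identically $1$. This avoids choosing explicit lifts altogether and handles all factor types uniformly. If you want to salvage your approach, the cleanest route would be to replace the case-by-case Clifford calculation in Step~3 with this connectedness observation applied to each factor $G_i$---i.e., use that the center of each $G_i$ is a torus so that $P^{-1}(Z(G_i))$ is Zariski connected---rather than trying to track signs through explicit products of anisotropic vectors.
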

\begin{proof}
We already know that $h\in\SO(V)$ because $\OO(V)_h$ has no $\OO(V_+)\times\OO(V_-)$-factor. Hence $g\in\GSpin(V)$.

To show $\eta\sigma_V(g)\eta^{-1}=g$, let $C\subseteq \OO(V)_h$ be the center of $\OO(V)_h$, so that we have the short exact sequence
\[
1\longrightarrow Z^{\circ}\longrightarrow P^{-1}(C)\longrightarrow C\longrightarrow 1.
\]
Since $\OO(V)_h$ is a product of (restrictions of scalar of) general linear groups and unitary groups, $C$ is an algebraic torus, which implies $P^{-1}(C)$ is an algebraic torus and hence is Zariski connected. Of course, $h\in C$. Further it can be readily verified that for any $c\in C$ we have $\beta c^{-1}\beta^{-1}=c$. Hence for any $\gamma\in P^{-1}(C)$, we have
\[
\eta\sigma_V(\gamma)\eta^{-1}=\epsilon(\gamma)\gamma\quad\text{for}\quad  \epsilon(\gamma)\in\{\pm1\}\subseteq Z^\circ,
\]
which gives rise to a morphism
\[
\epsilon: P^{-1}(C)\longrightarrow \{\pm 1\}\subseteq Z^\circ,\quad \gamma\mapsto \epsilon(\gamma)=\eta\sigma_V(\gamma)\eta^{-1}\gamma^{-1},
\]
of algebraic varieties. But $P^{-1}(C)$ is Zariski connected, and apparently $\epsilon(1)=1$. Thus we have $\epsilon(\gamma)=1$ for all $\gamma\in P^{-1}(C)$, and in particular $\epsilon(g)=1$.
\end{proof}

In the above proposition, since $P(\eta)=\beta$ and we know
\[
{\det}_F(\beta)=(-1)^k,
\]
where $k$ is such that $\dim_FV=2k$, we have either $\eta\in\GSpin(V)$ or $\eta\in\GPin(V)\smallsetminus\GSpin(V)$, depending on the parity of $k$.

\subsection{Orthogonal factor}
Next consider the case where $h\in\OO(V)$ is such that $\OO(V)_h=\OO(V_+)\times\OO(V_-)$; namely there is no factor of general linear groups or unitary groups, so that the decomposition of $V$ is
\[
V=V_+\oplus V_-.
\]
Let us also write
\[
h=(h_+, h_-),
\]
where $h_+=1_{V_+}$ and $h_-=-1_{V_-}$. Note that, in this case, we have $h^{-1}=h$, and can take
\[
\beta=(\beta_+, \beta_-)\in\OO(V_+)\times\OO(V_-)
\]
to be arbitrary.

Let $e_1,\dots, e_\ell$ be an orthogonal basis of $V_-$ and set
\[
\zeta_-=e_1\cdots e_\ell\in \GPin(V_-).
\]
Note that
\[
P(\zeta_-)=h_-=-1_{V_-}
\]
by Lemma \ref{lemma:shimura_zeta} (a). Hence, if $g\in\GPin(V)$ is such that $P(g)=h$ then we have
\[
g=z\zeta_-
\]
for some $z\in Z^\circ$. To see this, consider the composite
\[
\GSpin(V_+)\times\GPin(V_-)\longrightarrow\GPin(V_+\oplus V_-)\longrightarrow\OO(V_+\oplus V_-)
\]
and notice that the elements in $\GSpin(V_+)$ that map to $1_{V_+}$ are of the form $z$ for $z\in Z^\circ$.

\begin{prop}\label{prop:semisimple_conjugate_orthogonal_factor}
Keep the above notation and assumption. Let $\tau$ be either the Clifford involution or the canonical involution on $\GPin(V_+\oplus V_-)$. Let $k_-$ be such that
\[
\dim_FV_-=2k_- \text{ or }\ 2k_--1
\]
as before.
\begin{enumerate}[(a)]
\item Assume $\dim_FV_-$ is even. There exists $\eta\in\GPin(V_+\oplus V_-)$ such that
\[
\eta\tau(\zeta_-)\eta^{-1}=\zeta_-\qand P(\eta)=(\beta_+,\, \beta_-),
\]
where $(\beta_+, \beta_-)\in\OO(V_+)\times\OO(V_-)$ is such that
\[
\det(\beta_+)=\text{any}\qand \det(\beta_-)=(-1)^{k_-}.
\]
In particular, $\tau(\zeta_-)$ and $\zeta_-$ are conjugate in $\GPin(V_+\oplus V_-)$.
\item Assume both $\dim_FV_+$ and $\dim_FV_-$ are odd. Then for each $\varepsilon\in\{0, 1\}$ there exists $\eta\in\GPin(V_+\oplus V_-)$ such that
\[
\eta\tau(\zeta_-)\eta^{-1}=(-1)^{\varepsilon}\zeta_-.
\]
In particular all of $\tau(\zeta_-)$, $\zeta_-$ and $-\zeta_-$ are conjugate in $\GPin(V_+\oplus V_-)$.
\end{enumerate}
%
%
%
%
\end{prop}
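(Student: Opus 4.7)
The plan is to factor $\eta = \eta_+\eta_-$ with $\eta_\pm \in \GPin(V_\pm)$, reduce the conjugation $\eta\tau(\zeta_-)\eta^{-1}$ to a calculation inside each factor via Lemmas \ref{lemma:orthogonal_vectors}, \ref{lemma:zeta_almost_commutes_V_even}, and \ref{lemma:commuting_elements_in_GPin}, then choose $\eta_\pm$ whose canonical projections realize $\beta_\pm$. The first preparatory step is to compute $\tau(\zeta_-)$ explicitly. Writing $\zeta_- = e_1\cdots e_\ell$ with $\ell = \dim_F V_-$, Lemma \ref{lemma:shimura_zeta}(b) gives $\zeta_-^* = (-1)^{\ell(\ell-1)/2}\zeta_-$, while $\alpha(\zeta_-) = (-1)^\ell\zeta_-$ since $\zeta_-\in C^\ell(V_-)$, so $\overline{\zeta_-} = (-1)^{\ell(\ell+1)/2}\zeta_-$. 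In part (a) with $\ell = 2k_-$ even, both involutions yield $\tau(\zeta_-) = (-1)^{k_-}\zeta_-$. In part (b) with $\ell$ odd, $\tau(\zeta_-) = \epsilon\zeta_-$ for some $\epsilon\in\{\pm 1\}$; the precise value will not matter.

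For part (a) the target identity becomes $\eta\zeta_-\eta^{-1} = (-1)^{k_-}\zeta_-$. Since $\zeta_-\in\GSpin(V_-)$ when $\ell$ is even, Lemma \ref{lemma:commuting_elements_in_GPin} shows that $\eta_+$ and $\zeta_-$ commute, so $\eta_+$ drops out of the conjugation. For $\eta_-$, write $\eta_- = v_1\cdots v_m$ as a product of anisotropic vectors in $V_-$ using Lemma \ref{lemma:homogeneous}. From $P(\zeta_-) = -1_{V_-}$ (Lemma \ref{lemma:shimura_zeta}(a)) together with $\alpha(\zeta_-) = \zeta_-$, each factor satisfies $v_i\zeta_- v_i^{-1} = -\zeta_-$, so iterating yields $\eta_-\zeta_-\eta_-^{-1} = (-1)^m\zeta_- = \sign(\eta_-)\zeta_-$. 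Since $\sign(\eta_-) = \det(P(\eta_-)) = \det(\beta_-)$, the required sign is produced precisely when $\det(\beta_-) = (-1)^{k_-}$, which is the stated hypothesis; meanwhile any $\beta_+\in\OO(V_+)$ is realized by an arbitrary lift $\eta_+\in\GPin(V_+)$ via surjectivity of $P$.

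For part (b), choose any anisotropic $v\in V_+$, which exists because $V_+$ is nondegenerate of odd positive dimension. By Lemma \ref{lemma:orthogonal_vectors}, $v$ anticommutes with every $e_i\in V_-$, hence $v\zeta_- v^{-1} = (-1)^\ell\zeta_- = -\zeta_-$ since $\ell$ is odd. Then $\eta = 1$ gives $\eta\zeta_-\eta^{-1} = \zeta_-$ while $\eta = v$ gives $-\zeta_-$, so conjugation by a suitable $\eta\in\GPin(V_+\oplus V_-)$ realizes both signs $\pm\zeta_-$. Combining with $\tau(\zeta_-) = \epsilon\zeta_-$, the two choices produce both values $\pm\epsilon\zeta_-$, which for $\epsilon\in\{\pm 1\}$ exhausts $(-1)^\varepsilon\zeta_-$ as $\varepsilon$ ranges over $\{0,1\}$.

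The main item that requires care is consistent sign-bookkeeping across the two involutions $\tau$, the central element $\zeta_-$, and the two factors $\eta_\pm$. The step most likely to need attention is the identification $\sign(\eta_-) = \det(\beta_-)$ in part (a), which is what aligns the determinant hypothesis on $\beta_-$ with the sign $(-1)^{k_-}$ produced by iterated conjugation; once this alignment is in place the proposition follows without further obstacle.
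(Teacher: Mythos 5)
Your proof is correct and follows essentially the same route as the paper: compute $\tau(\zeta_-)$ via Lemma~\ref{lemma:shimura_zeta}, observe that anisotropic vectors in $V_-$ (resp.\ $V_+$) anticommute (resp.\ commute) with $\zeta_-$ when $\dim_F V_-$ is even, and conjugate by such vectors to produce the needed sign. The one refinement worth noting is that in part (a) the paper exhibits a specific conjugating element $\eta = x^m e_1^{k_-}$ built from reflections, whereas you show that \emph{any} lift $\eta_-$ of \emph{any} $\beta_- \in \OO(V_-)$ with $\det(\beta_-) = (-1)^{k_-}$ works, via the clean identity $\eta_-\zeta_-\eta_-^{-1} = \sign(\eta_-)\zeta_- = \det(\beta_-)\,\zeta_-$; this is a slightly stronger and more uniform statement, though the paper's explicit choice suffices for its application in Theorem~\ref{thm:semisimple_conjugate}.
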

\begin{proof}
(a) Assume $\dim_FV_-=2k_-$ even. Then we have $\zeta_-\in\GSpin(V_-)$, so that the Clifford involution and the canonical involution coincide, which implies
\[
\tau(\zeta_-)=\zeta_-^*=(-1)^{k_-}\zeta_-
\]
by Lemma \ref{lemma:shimura_zeta}. Also by Lemma \ref{lemma:orthogonal_vectors}
\[
e_1\zeta_-e_1^{-1}=e_1(e_1\cdots e_{\ell})e_1^{-1}=-\zeta_-
\]
because $\ell$ is even, which implies
\[
e_1^{k_-}\zeta_-e_1^{-k_-}=(-1)^{k_-}\zeta_-.
\]
Hence if $V_+=0$, we can take $\eta=e_1^{k_-}$ and we are done because $P(e_1)$ is the reflection in the hyperplane orthogonal to $e_1$, so that $\det(P(e_1))=-1$.

Assume $V_+\neq 0$. For any anisotropic $x\in V_+$, we have
\[
x\zeta_-x^{-1}=\zeta_-
\]
again by using Lemma \ref{lemma:orthogonal_vectors}. Hence if we take
\[
\eta=x^{m}{e_1}^{k_-}
\]
for an arbitrary $m$ then we can see this $\eta$ has the desired properties.

(b) Assume both $\dim_FV_+$ and $\dim_FV_-$ are odd. Then we have $\zeta_-\in\GPin(V_-)$ but $\zeta_-\notin\GSpin(V_-)$. Hence depending on $k_-$ and depending on whether $\tau$ is the Clifford involution or the canonical involution, we have
\[
\tau(\zeta_-)=\zeta_-\quad\text{or}\quad \tau(\zeta_-)=-\zeta_-.
\]
Assume $\tau(\zeta_-)=\zeta_-$. Since $\dim_FV_+$ is odd and so $V_+\neq 0$, we know there is an anisotropic $x\in V_+$. Since $\dim_FV_-$ is odd, by using Lemma \ref{lemma:orthogonal_vectors} we know that
\[
x\zeta_-x^{-1}=-\zeta_-.
\]
Hence if we take
\[
\eta=x^\varepsilon,
\]
then this $\eta$ has the desired properties. If $\tau(\zeta_-)=-\zeta_-$ then we can take $\eta=x^{\varepsilon+1}$.

\end{proof}

\subsection{General Case}
Now, we consider the general case. Let us first set up our notation. Let $g\in\GPin(V)$ be semisimple such that for $h:=P(g)$ we have
\[
\OO(V)_h\simeq G_1\times\cdots\times G_m\times \OO(V_+)\times\OO(V_-)
\]
and
\[
V=V_1\oplus\cdots\oplus V_m\oplus V_+\oplus V_-.
\]
Define the integers $k_i, k_+$ and $k_-$ as before, namely $\dim_FV_i=2k_i$, etc. Also let us write
\[
V'=V_1\oplus\cdots\oplus V_m
\]
and
\[
h=(h',h_+, h_-)\in\OO(V')\times\OO(V_+\oplus V_-).
\]
We can then write
\[
g=g'g_o=g_og'
\]
where $g'\in\GSpin(V')$ is such that $P(g')=h'$ and $g_o\in\GPin(V_+\oplus V_-)$ is such that $P(g_o)=(h_+, h_-)$. (Here $g'$ is in $\GSpin(V)$ by Proposition \ref{prop:semisimple_conjugate_without_orthogonal_factor} and we have $g'g_o=g_og'$ by Lemma \ref{lemma:commuting_elements_in_GPin}.)

With this notation, we can state our main theorem of this section as follows.
\begin{theorem}\label{thm:semisimple_conjugate}
For a semisimple element $g\in\GPin(V)$ as above, there exists $\eta\in\GPin(V)$ such that
\[
\eta\sigma_V(g)\eta^{-1}=g;
\]
namely $g$ and $\sigma_V(g)$ are conjugate in $\GPin(V)$.

Further, if $g\in\GSpin(V)$ then the conjugating element $\eta$ can be chosen so that
\[
P(\eta)=(\beta_1,\dots,\beta_m,\beta_{+},\beta_-)\in\OO(V_1)\times\cdots\times\OO(V_m)\times\OO(V_+)\times\OO(V_-)
\]
with the property that
\[
\det(\beta_i)=(-1)^{k_i},\quad \det(\beta_+)=\text{any},\qand \det(\beta_-)=(-1)^{k_-}.
\]
In particular, by choosing $\det(\beta_+)$ appropriately we have
\[
\det(P(\eta))=(-1)^k,
\]
where $k$ is such that $n=2k$ or $n=2k-1$. (Note that since  $g\in\GSpin(V)$, $\dim_FV_-$ is necessarily even.)
\end{theorem}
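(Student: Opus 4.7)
The plan is to prove the theorem by reducing to the two separate regimes handled by Propositions \ref{prop:semisimple_conjugate_without_orthogonal_factor} and \ref{prop:semisimple_conjugate_orthogonal_factor} and then gluing the resulting conjugating elements. Using the decomposition $V = V' \oplus V_+ \oplus V_-$ with $V' = V_1 \oplus \cdots \oplus V_m$, I would factorize $g = g' g_o$ with $g' \in \GSpin(V')$ satisfying $P(g') = h'$ and $g_o \in \GPin(V_+ \oplus V_-)$ satisfying $P(g_o) = (1_{V_+}, -1_{V_-})$. From the discussion preceding Proposition \ref{prop:semisimple_conjugate_orthogonal_factor}, this forces $g_o = z \zeta_-$ for some $z \in Z^\circ$, where $\zeta_- = e_1 \cdots e_\ell$ is built from an orthogonal basis of $V_-$. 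Lemma \ref{lemma:commuting_elements_in_GPin} ensures $g'$ and $g_o$ commute in $\GPin(V)$, since $g' \in \GSpin(V')$.

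For the $V'$-part, I would apply Proposition \ref{prop:semisimple_conjugate_without_orthogonal_factor} to $g'$ to produce $\eta' \in \GPin(V')$ with $\eta' \sigma_{V'}(g') {\eta'}^{-1} = g'$, where the intrinsic involution $\sigma_{V'}$ on $\GPin(V')$ is simply the canonical involution $\ast$ since $\dim_F V'$ is even. To transport this identity to the ambient $\sigma_V$, the key check is that $\sigma_V(g') = {g'}^*$: indeed $g' \in \GSpin(V) \subseteq C^+(V)$, so $\sign(g') = 1$, and both cases of the definition \eqref{eq:involuions_sigma_n} collapse to the canonical involution on $g'$. The determinant data on $P(\eta'|_{V_i}) = \beta_i$ follow directly from the construction in the paragraph preceding Proposition \ref{prop:semisimple_conjugate_without_orthogonal_factor}.

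For the orthogonal part, since $\sigma_V$ fixes the central element $z$ by Lemma \ref{lemma:sigma_V_fixes_center}, one has $\sigma_V(g_o) = z \sigma_V(\zeta_-)$, so it suffices to conjugate $\sigma_V(\zeta_-)$ to $\zeta_-$ inside $\GPin(V_+ \oplus V_-)$. Expanding $\sigma_V(\zeta_-)$ via \eqref{eq:involuions_sigma_n} yields $\zeta_-^* $ times an explicit sign determined by the parity of $n$ and whether $\zeta_-$ lies in $\GSpin$ or its complement; this places $\sigma_V(\zeta_-)$ into the form $\tau(\zeta_-)$ for either the Clifford or the canonical involution $\tau$, which are precisely the cases handled by Proposition \ref{prop:semisimple_conjugate_orthogonal_factor}. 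Choosing the $\eta_o \in \GPin(V_+ \oplus V_-)$ supplied by that proposition then yields $\eta_o \sigma_V(g_o) \eta_o^{-1} = g_o$, with $\det(P(\eta_o)|_{V_+})$ arbitrary and $\det(P(\eta_o)|_{V_-}) = (-1)^{k_-}$ in the even case relevant to $g \in \GSpin(V)$.

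Finally I would set $\eta = \eta' \eta_o$ and verify $\eta \sigma_V(g) \eta^{-1} = g$ by combining the two conjugations, using that $\eta' \in \GPin(V')$ commutes with $g_o \in \GPin(V_+ \oplus V_-)$ (up to a controlled sign governed by Lemma \ref{lemma:commuting_elements_in_GPin}) and symmetrically for $\eta_o$ and $g'$; any residual sign is pinned down to $+1$ via the MVW reduction \eqref{eq:up_to_sign} already established at the level of $\OO(V)$. The determinant claim $\det(P(\eta)) = (-1)^k$ follows by assembling the factor-by-factor determinants and using that $\dim_F V_-$ is forced to be even when $g \in \GSpin(V)$ (because then $h \in \SO(V)$ and $h|_{V_-} = -1_{V_-}$). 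The main obstacle I anticipate is the careful sign bookkeeping in the odd case $n = 2k-1$, where the twist $\sign(g)^{k+1}$ in the definition of $\sigma_V$ interacts nontrivially with the parities of $\dim_F V_+$ and $\dim_F V_-$, and one must ensure these parities align with the dichotomy of cases (a) and (b) in Proposition \ref{prop:semisimple_conjugate_orthogonal_factor} so that the choice of $\tau$ and of $\varepsilon$ there is consistent with the ambient $\sigma_V$.
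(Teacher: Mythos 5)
Your overall strategy --- decompose $g=g'g_o$, treat the two factors by Propositions \ref{prop:semisimple_conjugate_without_orthogonal_factor} and \ref{prop:semisimple_conjugate_orthogonal_factor}, and glue $\eta=\eta'\eta_o$ --- is exactly the paper's, and your observations that $\sigma_V(g')={g'}^*$ and $\sigma_V(g_o)=z\,\sigma_V(\zeta_-)$ are correct. But the step where you dispose of the ``residual sign'' is a genuine gap, and the way you propose to close it is circular: \eqref{eq:up_to_sign} says only that $\eta\sigma_V(g)\eta^{-1}=\pm g$, which holds for \emph{every} lift $\eta$ of a conjugating element of $\OO(V)$; it cannot pin the sign to $+1$, since determining that sign is precisely the content of the theorem. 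The sign has to be resolved constructively. When $\dim_FV_-$ is even no sign ever appears, because $\sigma_V(g')\in\GSpin(V')$ and $g_o\in\GSpin(V_+\oplus V_-)$, so every commutation in expanding $(\eta'\eta_o)\sigma_V(g'g_o)(\eta'\eta_o)^{-1}$ is exact by Lemma \ref{lemma:commuting_elements_in_GPin}. When $\dim_FV_-$ is odd and $n$ is even (so $\dim_FV_+$ is odd and, in the nontrivial case, $V'\neq 0$), moving $g_o\notin\GSpin(V_+\oplus V_-)$ past ${\eta'}^{-1}$ may introduce a factor $-1$, and one must exploit the freedom $\varepsilon\in\{0,1\}$ in Proposition \ref{prop:semisimple_conjugate_orthogonal_factor}(b) to choose $\eta_o$ with $\eta_o\sigma_V(g_o)\eta_o^{-1}=-g_o$ or $+g_o$ according to whether that extra $-1$ occurs, so that the two signs cancel. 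This deliberate matching is exactly what you leave as an ``anticipated obstacle'' rather than carry out.

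There is a second, related gap: when $n$ is odd and $g\notin\GSpin(V)$, one has $\dim_FV_-$ odd but $\dim_FV_+$ even (possibly zero), a parity configuration covered by neither case (a) nor case (b) of Proposition \ref{prop:semisimple_conjugate_orthogonal_factor}, so your gluing has no input for the orthogonal factor there. The paper sidesteps this with a preliminary reduction: for $n=2k-1$ one has $\GPin(V)=\GSpin(V)\cup\GSpin(V)\zeta$ with $\zeta$ central and $\sigma_V(\zeta)=\zeta$ (Lemma \ref{lemma:sigma_V_fixes_center}), so it suffices to prove the conjugacy for $g\in\GSpin(V)$, where $\dim_FV_-$ is automatically even. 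Your write-up needs both this reduction and the explicit $\varepsilon$-matching above; the remaining ingredients (the identification of the ambient $\sigma_V$ with the canonical involution on $g'$, and the determinant bookkeeping when $g\in\GSpin(V)$) are fine and agree with the paper.
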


To prove the theorem, let us first mention that for odd $\GPin(V)$ we have only to consider $g\in \GSpin(V)$ thanks to the following lemma.
\begin{lemma}
Assume $n=2k-1$. If each semisimple $g\in\GSpin(V)$ is conjugate to $\sigma_V(g)$ in $\GPin(V)$ then each semisimple $g\in\GPin(V)$ is conjugate to $\sigma_V(g)$ in $\GPin(V)$.
\end{lemma}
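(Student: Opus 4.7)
The plan is to use the fact that, for $n=2k-1$, the element $\zeta = e_1\cdots e_n$ is central in $\GPin(V)$ by Proposition \ref{prop:center_of_Gpin} and lies in $\GPin(V)\smallsetminus\GSpin(V)$ by \eqref{eq:where_zeta_lives}. Consequently every element of $\GPin(V)\smallsetminus\GSpin(V)$ can be written as $g = g'\zeta$ with $g' \in \GSpin(V)$. The case $g \in \GSpin(V)$ is the hypothesis, so I would only need to handle $g \in \GPin(V)\smallsetminus\GSpin(V)$, reducing its conjugation problem to that of the factor $g' = g\zeta^{-1} \in \GSpin(V)$.

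The first check is that $g'$ is semisimple whenever $g$ is. By Lemma \ref{lemma:shimura_zeta}(d), $F \oplus F\zeta$ is a commutative etale $F$-subalgebra of the center of $C(V)$ (since $\zeta^2 \in F^\times$), so $\zeta$ is semisimple. As $g$ and $\zeta$ commute (centrality of $\zeta$) and are both semisimple, their product $g' = g\zeta^{-1}$ is again semisimple.

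Next, I would invoke Lemma \ref{lemma:sigma_V_fixes_center}, which gives $\sigma_V(\zeta) = \zeta$, together with the anti-multiplicativity of $\sigma_V$ and the centrality of $\zeta$, to compute
\[
\sigma_V(g) \;=\; \sigma_V(g'\zeta) \;=\; \sigma_V(\zeta)\,\sigma_V(g') \;=\; \zeta\,\sigma_V(g') \;=\; \sigma_V(g')\,\zeta.
\]
By the assumed statement applied to the semisimple element $g' \in \GSpin(V)$, there exists $\eta \in \GPin(V)$ with $\eta\,\sigma_V(g')\,\eta^{-1} = g'$. Using centrality of $\zeta$ one last time,
\[
\eta\,\sigma_V(g)\,\eta^{-1} \;=\; \bigl(\eta\,\sigma_V(g')\,\eta^{-1}\bigr)\cdot \bigl(\eta\zeta\eta^{-1}\bigr) \;=\; g'\,\zeta \;=\; g,
\]
which is exactly what is required. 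No real obstacle arises here; the whole argument is just the observation that in odd dimension the quotient $\GPin(V)/\GSpin(V)$ is generated by a central element that is fixed by $\sigma_V$, so the conjugation statement descends trivially from $\GSpin(V)$ to all of $\GPin(V)$.
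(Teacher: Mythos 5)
Your proposal is correct and is essentially the paper's own argument: write the nonidentity component as $\GSpin(V)\zeta$, use that $\zeta$ is central with $\sigma_V(\zeta)=\zeta$ (Lemma \ref{lemma:sigma_V_fixes_center}), and conjugate $\sigma_V(g'\zeta)$ by the same element that works for $g'$. Your explicit check that $g'=g\zeta^{-1}$ remains semisimple is a small completeness point the paper leaves implicit, but it does not change the route.
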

\begin{proof}
Recall we have the disjoint union
\[
\GPin(V)=\GSpin(V)\cup\GSpin(V)\zeta,
\]
where $\zeta$ is the central element as in \eqref{eq:shimura_zeta}. Since $\sigma_V(\zeta)=\zeta$ by Lemma \ref{lemma:sigma_V_fixes_center} and $\zeta$ is in the center, if $g\in\GSpin(V)$ is conjugate to $\sigma_V(g)$ in $\GPin(V)$, then $g\zeta$ is conjugate to $\sigma_V(g\zeta)$ by the same conjugating element.
\end{proof}

With this lemma, we can prove the theorem as follows.
\begin{proof}[Proof of Theorem \ref{thm:semisimple_conjugate}]
By Proposition \ref{prop:semisimple_conjugate_without_orthogonal_factor} we know that there is $\eta'\in\GPin(V')$ such that
\[
\eta'g'{\eta'}^{-1}=g'
\]
and
\[
P(\eta')=(\beta_1,\dots,\beta_m)\in\OO(V_1)\times\cdots\times\OO(V_m)
\]
with the property that $\det(\beta_i)=(-1)^{k_i}$.

Assume $\dim_FV_-$ is even, so that $g_o\in\GSpin(V_+\oplus V_-)$ and $g\in\GSpin(V)$. Note that
\[
g_o=z\zeta_-\in\GPin(V_-)\subseteq\GPin(V_+\oplus V_-)
\]
for some $z\in Z^\circ$.
Since $\sigma_V$ acts trivially on $Z^\circ$, by Proposition \ref{prop:semisimple_conjugate_orthogonal_factor} there exists $\eta_o\in\GPin(V_+\oplus V_-)$ such that
\[
\eta_o\sigma_V(g_o)\eta_o^{-1}=g_o
\]
and
\[
P(\eta_o)=(\beta_+, \beta_-)
\]
with the property that $\det(\beta_+)$ can be arbitrary and $\det(\beta_-)=(-1)^{k_-}$.

We can then compute
\begin{align*}
(\eta'\eta_o)\sigma_V(g)(\eta'\eta_o)^{-1}
&=(\eta'\eta_o)\sigma_V(g'g_o)(\eta'\eta_o)^{-1}\\
&=\eta'\eta_o\sigma_V(g')\sigma_V(g_o)\eta_o^{-1}{\eta'}^{-1}\\
&=\eta'\sigma_V(g')(\eta_o\sigma_V(g_o)\eta_o^{-1}){\eta'}^{-1}\\
&=\eta'\sigma_V(g')g_o{\eta'}^{-1}\\
&=\eta'\sigma_V(g'){\eta'}^{-1}g_o\\
&=g'g_o,
\end{align*}
where for the third equality we used $\sigma_V(g')\in\GSpin(V')$ and Lemma \ref{lemma:commuting_elements_in_GPin}, and for the fourth one we used $g_o\in\GSpin(V_+\oplus V_-)$ and the same lemma. Hence by taking
\[
\eta=\eta'\eta_o,
\]
we can see that this $\eta$ has the desired properties.

Next assume $\dim_FV_-$ is odd and $\dim_FV$ is even, so that necessarily $\dim_FV_+$ is odd. In this case $g\notin\GSpin(V)$, and we have only to show that $g$ and $\sigma_V(g)$ are conjugate and do not have to specify any particular property for the conjugating element. Assume $V'=0$. Then we already know it from Proposition \ref{prop:semisimple_conjugate_orthogonal_factor}. So assume $V'\neq 0$. Then again by Proposition \ref{prop:semisimple_conjugate_orthogonal_factor}, there exists $\eta_o\in\GPin(V_+\oplus V_-)$ such that
\[
\eta_o\sigma_V(g_o)\eta_o^{-1}=-g_o.
\]
Also let $\eta'\in\GPin(V')$ be as above. We can then compute
\begin{align*}
(\eta'\eta_o)\sigma_V(g)(\eta'\eta_o)^{-1}
&=(\eta'\eta_o)\sigma_V(g'g_o)(\eta'\eta_o)^{-1}\\
&=\eta'\eta_o\sigma_V(g')\sigma_V(g_o)\eta_o^{-1}{\eta'}^{-1}\\
&=\eta'\sigma_V(g')(\eta_o\sigma_V(g_o)\eta_o^{-1}){\eta'}^{-1}\\
&=-\eta'\sigma_V(g')g_o{\eta'}^{-1}\\
&=(-1)^2\eta'\sigma_V(g'){\eta'}^{-1}g_o\\
&=g'g_o,
\end{align*}
where for the third equality we used $\sigma_V(g')\in\GSpin(V')$ and Lemma \ref{lemma:commuting_elements_in_GPin}, and for the fourth one we used $g_o\notin\GSpin(V_+\oplus V_-)$ and the same lemma. Hence we can take $\eta=\eta'\eta_o$. (The sign change happens twice in the above computation, and this is the rationale for our choice of $\eta_o$.)

Finally, if both $\dim_FV_-$ and $\dim_FV$ are odd, then necessarily $g\in\GPin(V)\smallsetminus\GSpin(V)$. This case is taken care of by the above lemma.
\end{proof}

\begin{rem}\label{rem:choice_of_involution}
For $n=2k$ we have chosen $\sigma_V$ to be the canonical involution. However, we might as well choose $\sigma_V$ to be the Clifford involution. One can then see that the above argument works even if we use the Clifford involution, which implies that for $n=2k$ all of $g$, $\bar{g}$ and $g^*$ are conjugate in $\GPin(V)$ at least for semisimple $g$.
\end{rem}


The theorem implies the following two conjugacy statements for $\GSpin(V)$.

\begin{cor}\label{cor:conjugate_GSpin}
Each semisimple $g\in\GSpin(V)$ is conjugate to $e^k\sigma_V(g)e^{-k}$ in $\GSpin(V)$.
\end{cor}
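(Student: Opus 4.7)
The plan is to deduce this corollary directly from Theorem \ref{thm:semisimple_conjugate} by correcting the conjugating element there by a suitable power of $e$. Apply the theorem to our semisimple $g\in\GSpin(V)$ to obtain some $\eta\in\GPin(V)$ with $\eta\sigma_V(g)\eta^{-1}=g$ and, crucially, $\det(P(\eta))=(-1)^k$.

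Now $e$ is anisotropic, so $e\in\GPin(V)$ with $P(e)=r_e$, the reflection in the hyperplane orthogonal to $e$, which has determinant $-1$. Thus $\det(P(e^k))=(-1)^k$. Setting $\eta':=\eta e^{-k}$, we compute
\[
\det(P(\eta'))=\det(P(\eta))\det(P(e))^{-k}=(-1)^k(-1)^{-k}=1,
\]
so $P(\eta')\in\SO(V)$, i.e.\ $\eta'\in\GSpin(V)$. The conjugation
\[
\eta'(e^k\sigma_V(g)e^{-k}){\eta'}^{-1}=\eta e^{-k}\cdot e^k\sigma_V(g)e^{-k}\cdot e^k\eta^{-1}=\eta\sigma_V(g)\eta^{-1}=g
\]
then gives precisely the desired conjugacy inside $\GSpin(V)$.

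The substantive work has already been absorbed into Theorem \ref{thm:semisimple_conjugate}, namely the analysis of the centralizer $\OO(V)_{P(g)}$ and its decomposition into $\GL$-type, unitary, and orthogonal factors, which allowed us to prescribe $\det(P(\eta))=(-1)^k$ in the first place. Given that input, the corollary is a one-line algebraic manipulation, and no new obstacle arises beyond the observation that $e^k$ has exactly the right parity of determinant to serve as the correction term that brings $\eta$ into $\GSpin(V)$.
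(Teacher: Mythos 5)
Your argument is correct and relies on the same key input as the paper---Theorem \ref{thm:semisimple_conjugate} together with the prescribed value $\det(P(\eta))=(-1)^k$---but packages it more cleanly: the paper splits the proof into four cases according to the parities of $n$ and $k$, whereas your single correction $\eta\mapsto\eta e^{-k}$ handles all cases uniformly.
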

\begin{proof}
Note that by the theorem, we know that $g$ and $\sigma_V(g)$ are conjugate in $\GPin(V)$ and hence $g$ and $e^k\sigma_V(g)e^{-k}$ are conjugate in $\GPin(V)$. Thus we have to show that the conjugating element can be chosen from $\GSpin(V)$. Also note that if $k$ is even then $e^k\in Z^{\circ}$ so that $e^k\sigma_V(g)e^{-k}=\sigma_V(g)$, and if $k$ is odd then $e^k\sigma_V(g)e^{-k}=e\sigma_V(g)e^{-1}$.

Assume $n=2k$ with $k$ even, so that $e^k\sigma_V(g)e^{-k}=\sigma_V(g)$. From the theorem we know that $\eta\sigma_V(g)\eta^{-1}=g$ with $\det(P(\eta))=(-1)^k=1$. But this implies $\eta\in\GSpin(V)$.

Assume $n=2k$ with $k$ odd, so that $e^k\sigma_V(g)e^{-k}=e\sigma_V(g)e^{-1}$. From the theorem we know $g=\eta\sigma_V(g)\eta^{-1}$, where $\det(P(\eta))=-1$. Hence $g=\eta e^{-1} e\sigma_V(g)e^{-1} e\eta^{-1}$. But $\det(P(e))=-1$ because $P(e)$ is the reflection in the hyperplane orthogonal to $e$. Hence $\det(P(\eta e^{-1}))=1$, which implies $\eta e^{-1}\in\GSpin(V)$.

Assume $n=2k-1$ with $k$ even, so that $e^k\sigma_V(g)e^{-k}=\sigma_V(g)$. Since $g\in\GSpin(V)$, necessarily $\dim_FV_-$ is even. Hence $\dim_FV_+$ is odd and in particular nonzero. Thus in the theorem we can take $\det(\beta_+)$ to be arbitrary, which implies we can take $\eta$ from $\GSpin(V)$.

Assume $n=2k-1$ with $k$ odd, so that $e^k\sigma_V(g)e^{-k}=e\sigma_V(g)e^{-1}$. As in the above case, we can take $\det(\beta_+)$ to be arbitrary, so we can take $\eta$ from $\GPin(V)\smallsetminus\GSpin(V)$, so that $\eta e\in\GSpin(V)$.
\end{proof}


Let us mention the following, though we do not use it in this paper.
\begin{cor}\label{cor:conjugate_GSpin_2}
Assume $n=2k-1$. Each semisimple $g\in\GSpin(V)$ is conjugate to $\sigma_V(g)$ in $\GSpin(V)$.
\end{cor}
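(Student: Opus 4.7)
The plan is to deduce this sharpening of Corollary \ref{cor:conjugate_GSpin} directly from Theorem \ref{thm:semisimple_conjugate} by exploiting the freedom in choosing $\det(\beta_+)$. Corollary \ref{cor:conjugate_GSpin} only produces a conjugating element in $\GSpin(V)$ taking $e^k\sigma_V(g)e^{-k}$ to $g$; when $n=2k-1$ with $k$ even this already does the job since $e^k\in Z^\circ$, and the substantive case is $n=2k-1$ with $k$ odd. My strategy is to handle both parities uniformly by showing that the conjugating element $\eta$ produced by Theorem \ref{thm:semisimple_conjugate} can in fact be chosen to lie in $\GSpin(V)$, so that $\eta\sigma_V(g)\eta^{-1}=g$ on the nose.

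The first step is to verify that $V_+\neq 0$ in the centralizer decomposition $V=V_1\oplus\cdots\oplus V_m\oplus V_+\oplus V_-$ attached to $h=P(g)$. Each $V_i$ has even $F$-dimension, being either of the form $X_i\oplus X_i^*$ or Hermitian, so the parity of $\dim_FV_+ + \dim_FV_-$ matches that of $n$, which here is odd. On the other hand, $g\in\GSpin(V)$ forces $\det P(g)=1$, and since each factor $G_i$ of the centralizer is contained in $\SO(V_i)$ and $h_+=1_{V_+}$ contributes $+1$, we are left with $\det(h_-)=(-1)^{\dim_FV_-}=1$, whence $\dim_FV_-$ is even. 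Consequently $\dim_FV_+$ is odd, and in particular $V_+\neq 0$.

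With this in hand, the corollary is immediate. Theorem \ref{thm:semisimple_conjugate} supplies $\eta\in\GPin(V)$ with $\eta\sigma_V(g)\eta^{-1}=g$ and $P(\eta)=(\beta_1,\dots,\beta_m,\beta_+,\beta_-)$ where $\det(\beta_i)=(-1)^{k_i}$, $\det(\beta_-)=(-1)^{k_-}$, and $\det(\beta_+)$ is at our disposal. Since $V_+\neq 0$, we may choose $\det(\beta_+)$ so that
\[
\det(P(\eta))=\Big(\prod_{i=1}^m(-1)^{k_i}\Big)\det(\beta_+)\cdot(-1)^{k_-}=1,
\]
forcing $P(\eta)\in\SO(V)$ and hence $\eta\in\GSpin(V)$. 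There is no genuine obstacle here; the only thing worth double-checking is the parity argument guaranteeing $V_+\neq 0$, which is precisely what makes the $\det(\beta_+)$-freedom in Theorem \ref{thm:semisimple_conjugate} available.
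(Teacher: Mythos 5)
Your proposal is correct and is essentially the paper's own argument: the paper proves this corollary exactly as in Corollary \ref{cor:conjugate_GSpin}, noting that for $n=2k-1$ with $g\in\GSpin(V)$ one has $\dim_FV_-$ even, hence $\dim_FV_+$ odd and nonzero, so the freedom in $\det(\beta_+)$ from Theorem \ref{thm:semisimple_conjugate} lets one take the conjugating element $\eta$ in $\GSpin(V)$. Your explicit parity check that $V_+\neq 0$ is the same verification the paper makes, just written out in more detail.
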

\begin{proof}
This can be shown as in the above corollary. Namely for $n=2k-1$, the conjugating element $\eta$ in the theorem can be chosen both from $\GSpin(V)$ and $\GPin(V)\smallsetminus\GSpin(V)$.
\end{proof}

\subsection{Centralizer for $\GPin$}
Let $g\in\GPin(V)$ be semisimple and $h=P(g)\in\OO(V)$ as before. Also we write
\[
\OO(V)_h\simeq G'\times \OO(V_+)\times\OO(V_-),
\]
where $G'=G_1\times\cdots\times G_m$ is a product of (restrictions of scalar of) general linear groups and unitary groups, and we write
\[
V=V'\oplus V_+\oplus V_-=V_1\oplus\cdots\oplus V_m\oplus V_+\oplus V_-
\]
for the corresponding decomposition of $V$. Note that since $G'$ is Zariski connected, we have $G'\subseteq\SO(V')$.

Let $\GPin(V)_g$ be the centralizer of $g$ in $\GPin(V)$. It is immediate that
\[
P(\GPin(V)_g)\subseteq \OO(V)_h.
\]
This inclusion could be strict, depending on $g$, as follows.
\begin{lemma}\label{lemma:centralizer_GPin}
Keeping the above notation, we have
\[
P(\GPin(V)_g)=\begin{cases}G'\times \OO(V_+)\times\SO(V_-),&\text{if $\dim_FV_-$ is even};\\
G'\times \SO(V_+)\times\OO(V_-),&\text{if $\dim_FV_-$ is odd}.\end{cases}
\]
\end{lemma}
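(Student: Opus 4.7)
The plan is to determine exactly when a lift $\eta \in P^{-1}(\OO(V)_h)$ actually centralizes $g$ rather than sends it to $-g$. The inclusion $P(\GPin(V)_g) \subseteq \OO(V)_h$ is immediate. For the reverse direction, given $\beta \in \OO(V)_h$ and any lift $\eta \in P^{-1}(\beta)$, the element $\eta g \eta^{-1} g^{-1}$ lies in $Z^\circ$ and, by applying the Clifford norm and \eqref{eq:Norm_on_Z}, squares to $1$, so $\eta g \eta^{-1} = \pm g$. The whole task is to identify which sign occurs as $\beta$ varies over $\OO(V)_h$.

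First I would decompose according to $V = V' \oplus V_+ \oplus V_-$, writing $g = g' g_o$ with $g' \in \GSpin(V')$ and $g_o \in \GPin(V_+ \oplus V_-)$, and further $g_o = z\zeta_-$ for some $z \in Z^\circ$, where $\zeta_-$ is built from an orthogonal basis of $V_-$ as in Proposition~\ref{prop:semisimple_conjugate_orthogonal_factor}. Since $G' \subseteq \SO(V')$, any $\beta = (\beta', \beta_+, \beta_-) \in \OO(V)_h$ lifts to $\eta = \eta' \eta_+ \eta_-$ with $\eta' \in \GSpin(V')$ and $\eta_\pm \in \GPin(V_\pm)$, using that $P$ is surjective on each orthogonal factor. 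Because $g' \in \GSpin(V')$, Lemma~\ref{lemma:commuting_elements_in_GPin} lets me freely interchange $g'$ with anything in $\GPin(V_+ \oplus V_-)$, yielding the clean factorization
\[
\eta g \eta^{-1} = (\eta' g' \eta'^{-1}) \cdot (\eta_o g_o \eta_o^{-1}), \qquad \eta_o = \eta_+ \eta_-.
\]

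For the first factor, $h' = P(g')$ lies in the center of $G'$, so $g' \in P^{-1}(Z(G'))$. Mimicking the connectedness argument in the proof of Proposition~\ref{prop:semisimple_conjugate_without_orthogonal_factor}, the map $\eta' \mapsto \eta' g' \eta'^{-1} g'^{-1}$ is an algebraic morphism $P^{-1}(G') \to \{\pm 1\}$ trivial at $1$, and $P^{-1}(G')$ is a connected $\GL_1$-extension of the connected group $G'$, so it is identically $1$. Hence $\eta' g' \eta'^{-1} = g'$ automatically, and the whole problem reduces to when $\eta_o \zeta_- \eta_o^{-1} = \zeta_-$.

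The main (and only) subtlety is a case split on the parity of $\dim_F V_-$, exploiting the different behavior of $\zeta_-$. If $\dim_F V_-$ is even, then $\zeta_- \in \GSpin(V_-)$, so Lemma~\ref{lemma:commuting_elements_in_GPin} makes $\eta_+$ commute with $\zeta_-$ unconditionally, while the argument of Lemma~\ref{lemma:zeta_almost_commutes_V_even} applied inside $V_-$ gives $\eta_- \zeta_- \eta_-^{-1} = \sign(\eta_-)\zeta_-$; thus $\eta \in \GPin(V)_g$ iff $\eta_- \in \GSpin(V_-)$, producing the factor $\SO(V_-)$. If $\dim_F V_-$ is odd, then $\zeta_- \in \GPin(V_-) \smallsetminus \GSpin(V_-)$, and Lemma~\ref{lemma:shimura_zeta}(a) combined with $\alpha(\zeta_-) = -\zeta_-$ gives $\zeta_- v = v \zeta_-$ for every $v \in V_-$, hence $\zeta_-$ commutes with all of $\GPin(V_-)$; meanwhile Lemma~\ref{lemma:commuting_elements_in_GPin} shows $\eta_+$ commutes with $\zeta_-$ iff $\eta_+ \in \GSpin(V_+)$, producing the factor $\SO(V_+)$. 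Assembling the two cases yields the stated formula for $P(\GPin(V)_g)$. The only place requiring care is tracking which factor's sign character governs the commutation, which is entirely dictated by the parity of $\dim_F V_-$ through the position of $\zeta_-$ in $\GSpin(V_-)$ versus its complement.
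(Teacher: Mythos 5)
Your argument is correct and reaches the stated conclusion, using the same ingredients as the paper (Clifford norm plus \eqref{eq:Norm_on_Z} to get $\eta g\eta^{-1}=\pm g$, a Zariski-connectedness argument to dispose of the general-linear/unitary block, and Lemmas \ref{lemma:commuting_elements_in_GPin} and \ref{lemma:zeta_almost_commutes_V_even} for the explicit sign computation on the orthogonal blocks). The only real divergence from the paper is \emph{where} the connectedness argument is applied. The paper fixes $\gamma$, considers the morphism $a\mapsto\gamma a\gamma^{-1}a^{-1}$ on the Zariski-connected coset $P^{-1}(C^\circ)\zeta_-$ (where $C^\circ$ is the identity component of the center of $\OO(V)_h$), uses constancy to replace $g$ by the cleaner representative $\zeta_-$, and only then decomposes $\gamma=\gamma'\gamma_+\gamma_-$. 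You instead factor $\eta=\eta'\eta_+\eta_-$ and $g=g'g_o$ first, get the clean product $\eta g\eta^{-1}=(\eta'g'\eta'^{-1})(\eta_o g_o\eta_o^{-1})$ via Lemma \ref{lemma:commuting_elements_in_GPin}, and apply the connectedness argument to the morphism $\eta'\mapsto\eta' g'\eta'^{-1}g'^{-1}$ on the connected torus extension $P^{-1}(G')$. Both routes are valid: the paper's version needs one to recognize that $h\in C^\circ h_-$ and that the coset of a torus is connected, but keeps $\gamma$ arbitrary; yours needs only the connectedness of $P^{-1}(G')$ and the observation that $P(\eta')$ commutes with $h'$ (because $h'$ is central in $G'$), at the cost of constructing a specific lift $\eta$ compatible with the block decomposition, which is harmless since the answer is insensitive to rescaling by $Z^\circ$. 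Your localization of the connectedness step to the $G'$ factor is a little more modular, and arguably makes more transparent that the only source of sign obstruction is the orthogonal factor.
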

\begin{proof}
First recall that
\[
h=(h',h_+,h_-)\in G'\times \OO(V_+)\times\OO(V_-),
\]
where in particular $h_-=-1_{V_-}$. Let $\gamma\in P^{-1}(\OO(V)_h)$, so that $P(\gamma)P(g)P(\gamma)^{-1}=P(g)$, which implies $\gamma g\gamma^{-1}=zg$ for some $z\in Z^\circ$. By taking the Clifford norm on both sides, we have $z=\pm 1$, namely $\gamma g\gamma^{-1}=\pm g$. It suffices to show $\gamma g\gamma^{-1}=g$ if and only if $P(\gamma)$ is in $G'\times \OO(V_+)\times\SO(V_-)$ if $\dim_FV_-$ is even, and is in $G'\times \SO(V_+)\times\OO(V_-)$ if $\dim_FV_-$ is odd.

Let $C^\circ$ be the connected component of the center of $\OO(V)_h$. We then have
\[
h\in C^\circ h_-.
\]
Note that $P^{-1}(C^\circ)$ is an algebraic torus because it is an extension of the torus $C^{\circ}$ by the torus $Z^{\circ}$. Consider the morphism
\[
\varphi:P^{-1}(C^\circ)\zeta_-\longrightarrow \{\pm 1\}\subseteq Z^\circ,\quad a\mapsto \gamma a\gamma^{-1}a^{-1}.
\]
Since $P^{-1}(C^\circ)\zeta_-$ is Zariski connected, this morphism has to be constant; namely either $\varphi(a)=1$ for all $a$ or $\varphi(a)=-1$ for all $a$.

Now, we can write
\[
\gamma=\gamma'\gamma_+\gamma_-,
\]
where $\gamma'\in\GSpin(V')$, $\gamma_+\in\GPin(V_+)$ and $\gamma_-\in\GPin(V_-)$. Here it should be mentioned that $\gamma'$ is in $\GSpin(V')$ because $P(\gamma')\in G'\subset\SO(V')$.

Assume $\dim_FV_-$ is even, so that $\zeta_-$ is in the center of $\GSpin(V_-)$. If $\gamma_-\in\GSpin(V_-)$ then we have
\begin{align*}
\gamma \zeta_-&=\gamma'\gamma_+\gamma_-\zeta_-\\
&=\gamma'\gamma_+\zeta_-\gamma_-\\
&=\gamma'\zeta_-\gamma_+\gamma_-\quad (\text{Lemma \ref{lemma:commuting_elements_in_GPin}})\\
&=\zeta_-\gamma'\gamma_+\gamma_- \quad (\text{Lemma \ref{lemma:commuting_elements_in_GPin}})\\
&=\zeta_-\gamma.
\end{align*}
Hence $\varphi(\zeta_-)=1$, which implies the morphism $\varphi$ is identically 1. Thus $\gamma\in\GPin(V)_g$. If $\gamma_-\in\GPin(V_-)\smallsetminus\GSpin(V_-)$, then in the above computation of $\gamma \zeta_-$, we instead have $\gamma_-\zeta_-=-\zeta_-\gamma_-$ by Lemma \ref{lemma:zeta_almost_commutes_V_even}, which implies $\gamma\zeta_-=-\zeta_-\gamma$. Hence $\varphi(\zeta_-)=-1$; namely $\varphi$ is identically $-1$. Hence $\gamma\notin\GPin(V)_g$. This completes the proof when $\dim_FV_-$ is even.

Assume $\dim_FV_-$ is odd, so that $\zeta_-$ is in the center of $\GPin(V_-)$. If $\gamma_+\in\GSpin(V_+)$ then we have the same computation as above to show $\gamma\zeta_-=\zeta_-\gamma$, which shows $\gamma\in\GPin(V)_g$. If $\gamma_+\in\GPin(V_+)\smallsetminus\GSpin(V_+)$, then we instead have $\gamma_+\zeta_-=-\zeta_-\gamma_+$ by Lemma \ref{lemma:commuting_elements_in_GPin}, which implies $\gamma\zeta_-=-\zeta_-\gamma$ and hence $\gamma\notin\GPin(V)_g$.

\end{proof}


\section{Contragredients}


In this section we consider representations of $\GPin(V)$ and $\GSpin(V)$, and show that they are ``essentially self-dual".

\subsection{Sign character of representation}
Recall in \eqref{eq:sign_character} that we have defined the sign character $\sign:\GPin(V)\to\{\pm 1\}$ which sends the nonidentity component to $-1$. If we consider $\{\pm 1\}$ as a subset of $\C^\times$, we can view $\sign$ as a character on $\GPin(V)$. If we view $\{\pm 1\}$ as a subset of $F^\times=Z^\circ$, we can view $\sign$ as a homomorphism $\GPin(V)\to\GPin(V)$. We view $\sign$ in these two different ways depending on the context as follows.

For each $\pi\in\Irr(\GPin(V))$ we define the sign twist
\[
\sign\otimes\, \pi
\]
by $(\sign\otimes\,\pi)(g)=\sign(g)\pi(g)$, where we are viewing $\sign$ as a character.

We then define
\begin{equation}\label{eq:sign_of_pi}
\sign_{\pi}:\GPin(V)\longrightarrow \{\pm 1\}
\end{equation}
by
\[
\sign_{\pi}=\begin{cases}\one&\text{if $\pi(-1)=1$}\\\sign&\text{if $\pi(-1)=-1$},\end{cases}
\]
and call it the sign character associated with $\pi$. Then
\[
\sign_{\pi}(g)=\pi(\sign(g))=\omega_{\pi}(\sign(g))
\]
for all $g\in\GPin(V)$, where $\sign(g)\in\{\pm 1\}$ is viewed inside $F^\times$.


\subsection{Restrictions from $\GPin(V)$ to $\GSpin(V)$}
We make clear how representations of $\GPin(V)$ and $\GSpin(V)$ are related.

Assume $n=2k$. Let $\tau\in\Irr(\GSpin(V))$. Consider the induced representation $\Ind_{\GSpin(V)}^{\GPin(V)}\tau$. By elementary representation theory, there are two possibilities: either $\Ind_{\GSpin(V)}^{\GPin(V)}\tau$ is irreducible or $\Ind_{\GSpin(V)}^{\GPin(V)}\tau=\pi_1\oplus\pi_2$ for some $\pi_1,\pi_2\in\Irr(\GPin(V))$. The former is the case if and only if
\[
\tau^{\delta}\not\simeq\tau,
\]
where $\tau^\delta$ is the twist of $\tau$ by any element $\delta\in\GPin(V)\smallsetminus\GSpin(V)$, namely $\tau^\delta(g)=\tau(\delta g\delta^{-1})$ for $g\in\GSpin(V)$. The latter is the case if and only if $\pi_1$ and $\pi_2$ are two different extensions of $\tau$ to $\GPin(V)$, in which case we have $\pi_1=\sign\otimes\, \pi_2$. Furthermore any $\pi\in\Irr(\GPin(V))$ arises in either way. The following lemma will be used later.
\begin{lemma}\label{lemma:twist_by_sign_even_GPin}
Assume $n=2k$ and $\pi\in\Irr(\GPin(V))$. Then
\[
\pi\simeq \sign_{\pi}\otimes\, \pi,
\]
so that if $\omega_{\pi}(-1)=-1$ then we have $\sign\otimes\,\pi\simeq \pi$.
\end{lemma}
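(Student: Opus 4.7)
The case $\omega_{\pi}(-1)=1$ is immediate from the definition of $\sign_\pi$: in that regime $\sign_\pi=\one$ and $\sign_\pi\otimes\pi=\pi$ tautologically. So the substantive content is the case $\omega_{\pi}(-1)=-1$, in which $\sign_\pi=\sign$ and the goal becomes $\sign\otimes\pi\simeq\pi$.

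To prove this I would invoke the dichotomy recalled just before the lemma. Setting $\tau:=\pi|_{\GSpin(V)}$, either (A) $\tau$ is reducible and $\pi\simeq \Ind_{\GSpin(V)}^{\GPin(V)}\tau_0$ for some irreducible constituent $\tau_0$ with $\tau_0^\delta\not\simeq\tau_0$, or (B) $\tau$ is itself irreducible, $\pi$ is one of its two extensions to $\GPin(V)$, and the other extension is $\sign\otimes\pi\not\simeq\pi$. In case (A) we get $\sign\otimes\pi\simeq\pi$ for free, because $\sign$ restricts trivially to $\GSpin(V)$ and hence by the projection formula
\[
\sign\otimes\Ind_{\GSpin(V)}^{\GPin(V)}\tau_0\;\simeq\;\Ind_{\GSpin(V)}^{\GPin(V)}\!\bigl(\sign|_{\GSpin(V)}\otimes\tau_0\bigr)\;=\;\Ind_{\GSpin(V)}^{\GPin(V)}\tau_0.
\]
So the entire task reduces to ruling out case (B) under the hypothesis $\omega_{\pi}(-1)=-1$.

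To do that, the key mechanism is the anti-commutation of the element $\zeta\in\GSpin(V)$ defined in \eqref{eq:shimura_zeta} with elements outside $\GSpin(V)$. Assume we are in case (B), so that $\tau\simeq\tau^\delta$ for $\delta\in\GPin(V)\smallsetminus\GSpin(V)$, and in particular $\omega_\tau=\omega_{\tau^\delta}$ on $Z_{\GSpin(V)}$. Since $n=2k$, by Proposition \ref{prop:center_of_Gpin} we have $\zeta\in Z_{\GSpin(V)}$, and by Lemma \ref{lemma:zeta_almost_commutes_V_even} we have $\delta\zeta\delta^{-1}=-\zeta$. Therefore
\[
\omega_\tau(\zeta)\;=\;\omega_{\tau^\delta}(\zeta)\;=\;\tau(\delta\zeta\delta^{-1})\;=\;\tau(-\zeta)\;=\;\omega_\tau(-1)\,\omega_\tau(\zeta).
\]
Since $\omega_\tau(\zeta)\in\C^\times$ is nonzero, this forces $\omega_\tau(-1)=1$; and since $-1\in Z^\circ\subseteq\GSpin(V)$, we conclude $\omega_\pi(-1)=\omega_\tau(-1)=1$, contradicting the assumption $\omega_\pi(-1)=-1$. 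Thus case (B) is impossible and we must be in case (A), whence $\sign\otimes\pi\simeq\pi$.

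The one technical caveat is that Lemma \ref{lemma:zeta_almost_commutes_V_even} is stated for $n=2k>2$; the case $n=2$ must be checked separately, but is in fact easier, because then $\GSpin(V)$ is abelian so $\tau$ is one-dimensional, and writing $\delta\in\GPin(V)\smallsetminus\GSpin(V)$ as a product of an odd number of anisotropic vectors via Lemma \ref{lemma:homogeneous} still yields $\delta\zeta\delta^{-1}=-\zeta$ by Lemma \ref{lemma:shimura_zeta}(a), so the same computation goes through verbatim. I do not anticipate a real obstacle: once the dichotomy is set up and the anti-commutation with $\zeta$ is in hand, the argument is essentially a one-line character computation.
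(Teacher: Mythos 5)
Your argument is correct, but it takes a genuinely different route from the paper. The paper's proof is a one-step direct computation: since $\zeta\in Z_{\GSpin(V)}\subseteq\GPin(V)$, conjugation by $\zeta$ is an inner automorphism, so $\pi\simeq\pi^\zeta$ where $\pi^\zeta(g)=\pi(\zeta g\zeta^{-1})$; Lemma \ref{lemma:zeta_almost_commutes_V_even} gives $\zeta g\zeta^{-1}=\sign(g)g$ for all $g\in\GPin(V)$, and since $\sign(g)\in\{\pm1\}\subseteq Z^\circ$ this yields $\pi^\zeta=\sign_\pi\otimes\pi$ at once, with $\pi(\zeta)$ serving as an explicit intertwiner and no case distinction on $\omega_\pi(-1)$. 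You instead run the index-two Clifford-theory dichotomy and eliminate the ``extension'' case under $\omega_\pi(-1)=-1$ by evaluating central characters at $\zeta$, using the same anti-commutation $\delta\zeta\delta^{-1}=-\zeta$ as the engine; the induced case is then handled by the projection formula. Both arguments are sound and rest on the same key fact (Lemma \ref{lemma:zeta_almost_commutes_V_even}). The paper's version is shorter, uniform in $\omega_\pi(-1)$, and produces the intertwiner explicitly; yours is longer but yields the extra structural conclusion that $\omega_\pi(-1)=-1$ forces $\pi|_{\GSpin(V)}$ to be reducible (i.e.\ $\pi$ induced from $\GSpin(V)$), and you are right to flag and patch the $n=2$ case, since Lemma \ref{lemma:zeta_almost_commutes_V_even} is stated only for $n=2k>2$ even though its proof (and hence the paper's proof of this lemma, which quietly relies on it) goes through verbatim for $n=2$.
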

\begin{proof}
Let $\zeta\in Z_{\GSpin(V)}$ be the central element as in \eqref{eq:shimura_zeta} and let $\pi^\zeta$ be the representation of $\GPin(V)$ defined by $\pi^{\zeta}(g)=\pi(\zeta g\zeta^{-1})$, so that $\pi\simeq\pi^{\zeta}$. But Lemma \ref{lemma:zeta_almost_commutes_V_even} implies $\zeta g\zeta^{-1}=\sign(g)g$ for all $g\in\GPin(V)$. Hence we have $\pi^{\zeta}(g)=\pi(\sign(g)g)=\sign_{\pi}(g)\pi(g)$, namely $\pi\simeq \sign_{\pi}\otimes\, \pi$.

\end{proof}


Assume $n=2k-1$, in which case we know
\[
\GPin(V)=\GSpin(V)\cup\GSpin(V)\zeta,
\]
where $\zeta$ is as in \eqref{eq:shimura_zeta}. Let $\tau\in\Irr(\GSpin(V))$. Since $\zeta$ is in the center of $\GPin(V)$ and in particular commutes with all the elements in $\GSpin(V)$, we know that $\tau^\zeta\simeq\tau$. Namely, $\tau$ always admits two different extensions $\pi_1$ and $\pi_2=\sign\otimes\, \pi_1$, and any  $\pi\in\Irr(\GPin(V))$ arises in this way.

\subsection{Character twists}
Let $\omega:F^\times\longrightarrow\C^\times$ be a character. For each $\pi\in\Irr(\GPin(V))$, we define
\[
\omega\otimes\pi:=(\omega\circ N)\otimes \pi,
\]
namely $(\omega\otimes\pi)(g)=\omega(N(g))\pi(g)$ for $g\in\GPin(V)$, where we recall $N:\GPin(V)\to F^\times$ is the Clifford norm \eqref{eq:Clifford_norm}. Also for $\tau\in\Irr(\GSpin(V))$ we define $\omega\otimes\tau$ in the same way.

\subsection{Contragredients}
In this subsection, we prove Theorem \ref{thm:contragredient_introduction}, which follows from Theorem \ref{thm:semisimple_conjugate} and Harish-Chandra's regularity theorem.

%

To describe our theorem, for $\pi\in\Irr(\GPin(V))$ and $\tau\in\Irr(\GSpin(V))$, we define $\pi^{\sigma}\in\Irr(\GPin(V))$ and $\tau^{\sigma}\in\Irr(\GSpin(V))$ by
\[
\pi^{\sigma}(g)=\pi(\sigma_V(g)^{-1}) \qand
\tau^{\sigma}(g)=\begin{cases}\tau(e^k\sigma_V(g)^{-1}e^{-k}),&\text{if $n=2k$};\\ \tau(\sigma_V(g)^{-1}),&\text{if $n=2k-1$},\end{cases}
\]
respectively.

We then have the following
\begin{theorem}\label{thm:contragredient_GPin}
\quad
\begin{enumerate}[(a)]
\item Let $\pi\in\Irr(\GPin(V))$. Then
\[
\pi^\vee\simeq\pi^{\sigma}=\begin{cases}\omega_{\pi}^{-1}\otimes\pi,&\text{if $n=2k$};\\ \sign_{\pi}^{k}\omega_{\pi}^{-1}\otimes\pi,&\text{if $n=2k-1$}.\end{cases}
\]
Here by $\sign_{\pi}^{k}\omega_{\pi}^{-1}$ we mean the character on $\GPin(V)$ defined by
\[
g\mapsto \sign_{\pi}(g)^{k}\cdot\omega_{\pi}(N(g))^{-1},
\]
where strictly speaking the central character $\omega_\pi$ is restricted to the connected component $Z^\circ$ of the center for $n=2k-1$.
\item Let $\tau\in\Irr(\GSpin(V))$. Then
\[
\tau^\vee\simeq\tau^{\sigma}=\begin{cases}\omega_{\tau}^{-1}\otimes\tau^{\delta}&\text{if $n=2k$ with $k$ odd};\\
\omega_{\tau}^{-1}\otimes\tau&\text{otherwise},\end{cases}
\]
where $\delta$ is any element in $\GPin(V)\smallsetminus\GSpin(V)$.
\end{enumerate}
\end{theorem}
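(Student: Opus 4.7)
The plan is to prove both isomorphism statements via Harish-Chandra's regularity theorem, using Theorem \ref{thm:semisimple_conjugate} (and its refinement Corollary \ref{cor:conjugate_GSpin} in the $\GSpin$ case) to compare characters on semisimple elements, and then to identify $\pi^\sigma$ (resp.\ $\tau^\sigma$) with the explicit formulas on the right-hand sides by direct algebraic manipulation.

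First, I would check that $\pi^\sigma$ is really a representation. Because $\sigma_V$ is an anti-involution, the map $g\mapsto \sigma_V(g)^{-1}$ is a group automorphism of $\GPin(V)$, so $\pi^\sigma(g):=\pi(\sigma_V(g)^{-1})$ defines an element of $\Irr(\GPin(V))$; the analogous statement for $\tau^\sigma$ holds since in the $n=2k$ case $g\mapsto e^k\sigma_V(g)^{-1}e^{-k}$ is an automorphism of $\GSpin(V)$ (regardless of the parity of $k$). Next, by Harish-Chandra regularity, the Harish-Chandra character $\chi_\pi$ is locally integrable on $\GPin(V)$, locally constant on the regular semisimple set, and determines $\pi$ up to isomorphism. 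Since $\chi_{\pi^\vee}(g)=\chi_\pi(g^{-1})$ and $\chi_{\pi^\sigma}(g)=\chi_\pi(\sigma_V(g)^{-1})$, to prove $\pi^\vee\simeq\pi^\sigma$ it suffices to show these two functions agree on regular semisimple $g$. But Theorem \ref{thm:semisimple_conjugate} applied to $g^{-1}$ gives $\eta\in\GPin(V)$ with $\eta\sigma_V(g^{-1})\eta^{-1}=g^{-1}$, i.e.\ $\eta\sigma_V(g)^{-1}\eta^{-1}=g^{-1}$, so the two character values coincide. The $\GSpin$ version of the same reduction uses Corollary \ref{cor:conjugate_GSpin} in place of Theorem \ref{thm:semisimple_conjugate}; this is essential because the conjugating element must lie in $\GSpin(V)$ for the character computation to take place inside the $\GSpin$ character theory, and Corollary \ref{cor:conjugate_GSpin} provides exactly a conjugation of $g$ to $e^k\sigma_V(g)e^{-k}$ by a $\GSpin$-element.

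Second, I would compute $\pi^\sigma$ explicitly. From $\overline{g}=\sign(g)g^*$ and $N(g)=g\bar g$ (valid on $\GPin(V)$), we get $g^*=\sign(g)\,N(g)\,g^{-1}$, hence $(g^*)^{-1}=\sign(g)\,N(g)^{-1}\,g$. Substituting into the definition of $\sigma_V$:
\[
\sigma_V(g)^{-1}=\begin{cases}\sign(g)\,N(g)^{-1}\,g,&n=2k;\\ \sign(g)^{k+1}(g^*)^{-1}=\sign(g)^{k}\,N(g)^{-1}\,g,&n=2k-1.\end{cases}
\]
Applying $\pi$ and using centrality of $\sign(g),N(g)\in F^\times\subseteq Z^\circ$ produces $\pi^\sigma(g)=\sign_\pi(g)^{\epsilon}\,\omega_\pi(N(g))^{-1}\,\pi(g)$ with $\epsilon=1$ for $n=2k$ and $\epsilon=k$ for $n=2k-1$. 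The case $n=2k-1$ matches the theorem as stated. For $n=2k$ the rogue factor $\sign_\pi(g)$ is absorbed by Lemma \ref{lemma:twist_by_sign_even_GPin}, which gives $\sign_\pi\otimes\pi\simeq\pi$, producing the clean formula $\pi^\vee\simeq\omega_\pi^{-1}\otimes\pi$.

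For part (b) the same calculation is carried out on $\GSpin(V)$, where $\sign(g)=1$ simplifies everything. For $n=2k-1$ we directly obtain $\tau^\sigma=\omega_\tau^{-1}\otimes\tau$. For $n=2k$, $\sigma_V(g)^{-1}=N(g)^{-1}g$, so $\tau^\sigma(g)=\omega_\tau(N(g))^{-1}\,\tau(e^kge^{-k})$. If $k$ is even then $e^k\in Z^\circ$ and the conjugation is trivial; if $k$ is odd then $e^k\in\GPin(V)\smallsetminus\GSpin(V)$ plays the role of the twisting element $\delta$, giving $\tau^\sigma=\omega_\tau^{-1}\otimes\tau^\delta$. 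I expect the main obstacle to be not the isomorphism $\pi^\vee\simeq\pi^\sigma$ (which is immediate from Theorem \ref{thm:semisimple_conjugate} once one has Harish-Chandra regularity at hand) but rather the careful sign bookkeeping in the explicit identification of $\pi^\sigma$, particularly ensuring that Lemma \ref{lemma:twist_by_sign_even_GPin} (which rests on Lemma \ref{lemma:zeta_almost_commutes_V_even}) is applied in exactly the right parity case, and that in the $\GSpin$ case one uses Corollary \ref{cor:conjugate_GSpin} rather than Theorem \ref{thm:semisimple_conjugate} so that the conjugation witnesses remain inside $\GSpin(V)$.
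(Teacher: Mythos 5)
Your proposal follows the paper's proof essentially verbatim: establish $\pi^\vee\simeq\pi^\sigma$ via Harish-Chandra regularity once Theorem \ref{thm:semisimple_conjugate} supplies the conjugation, then write $\sigma_V(g)^{-1}=\sign(g)^\epsilon N(g)^{-1}g$ and absorb the residual $\sign_\pi$ factor in the $n=2k$ case by Lemma \ref{lemma:twist_by_sign_even_GPin}. Your explicit remark that part (b) must use Corollary \ref{cor:conjugate_GSpin} so that the conjugating witness lies in $\GSpin(V)$ is correct, and is only implicit in the paper's terse statement that ``the case for $\GSpin(V)$ can be proven in the same way.''

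One detail you should tighten. For $n=2k-1$ with $k$ odd, Corollary \ref{cor:conjugate_GSpin} conjugates $g^{-1}$ inside $\GSpin(V)$ to $e^k\sigma_V(g)^{-1}e^{-k}=e\sigma_V(g)^{-1}e^{-1}$, not to $\sigma_V(g)^{-1}$; the character identity you get from it therefore reads $\Theta_{\tau^\vee}(g)=\Theta_\tau(e\sigma_V(g)^{-1}e^{-1})$, which, taken at face value, identifies $\tau^\vee$ with $g\mapsto\tau(e\sigma_V(g)^{-1}e^{-1})=\omega_\tau^{-1}\otimes\tau^\delta$ rather than with the paper's $\tau^\sigma(g)=\tau(\sigma_V(g)^{-1})=\omega_\tau^{-1}\otimes\tau$. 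To close the gap you must observe that for $n$ odd every $\tau\in\Irr(\GSpin(V))$ satisfies $\tau^\delta\simeq\tau$ for all $\delta\in\GPin(V)\smallsetminus\GSpin(V)$ (the element $\zeta$ is central in $\GPin(V)$ when $n$ is odd, and any two choices of $\delta$ differ by a $\GSpin(V)$-element); alternatively, invoke Corollary \ref{cor:conjugate_GSpin_2} for $n=2k-1$, which conjugates $g$ to $\sigma_V(g)$ inside $\GSpin(V)$ directly without the intervening $e^k$-twist. With either patch, your argument is complete.
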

\begin{proof}
For each semisimple $g\in\GPin(V)$, we know by Theorem \ref{thm:semisimple_conjugate} that $\sigma_V(g)$ and $g$ are conjugate in $\GPin(V)$. Hence the assertion $\pi^{\vee}\simeq\pi^{\sigma}$ can be proven by Harish-Chandra's regularity theorem. Indeed, if $\Theta_{\pi^\vee}$ is the distribution character of $\pi^\vee$ then we have $\Theta_{\pi^\vee}(g)=\Theta_\pi(g^{-1})$ for $g\in\GPin(V)$. Since $\sigma_V(g)$ and $g$ are conjugate, we have $\Theta_\pi(g^{-1})=\Theta_{\pi}(\sigma_V(g)^{-1})$, the latter being the distribution character of $\pi^\sigma$. Hence $\pi^\vee\simeq\pi^{\sigma}$.

Now, since
\[
\sigma_V(g)^{-1}=\begin{cases}{g^*}^{-1}=\sign(g)N(g)^{-1}g,&\text{if $n=2k$};
\\\sign(g)^{k+1}{g^*}^{-1}=\sign(g)^{k}N(g)^{-1}g,&\text{if $n=2k-1$},\end{cases}
\]
we have
\[
\pi^{\sigma}=\begin{cases}\omega_{\pi}^{-1}\otimes\pi&\text{if $n=2k$};\\ \sign_{\pi}^{k}\omega_{\pi}^{-1}\otimes\pi&\text{if $n=2k-1$},\end{cases}
\]
where we used Lemma \ref{lemma:twist_by_sign_even_GPin} for $n=2k$.

The case for $\GSpin(V)$ can be proven in the same way. Namely, first one can show $\tau\simeq\tau^{\sigma}$, and then show $\tau^{\sigma}$ is described as in the theorem by using $\sign(g)=1$ for $g\in\GSpin(V)$ and $e\in\GPin(V)\smallsetminus\GSpin(V)$.

%
\end{proof}

\begin{rem}
If $\omega_{\pi}=\one$ then the representation $\pi$ factors through the canonical projection $P:\GPin(V)\to\OO(V)$ and hence it can be viewed as a representation of $\OO(V)$. Then the theorem recovers the well-known fact that every irreducible admissible representation of $\OO(V)$ is self-dual.
\end{rem}

\begin{rem}
In the above proof, we used Harish-Chandra's regularity theorem, which is known only for characteristic zero. This is crucial because we can prove Theorem \ref{thm:semisimple_conjugate} only for the semisimple elements $g\in\GPin(V)$. If we could prove the analogous theorem for all the elements, then we could get away with Harish-Chandra's regularity theorem, which would allow us to extend the above theorem to any $p$-adic field of characteristic different from 2.
\end{rem}

\begin{rem}
For any smooth (not necessarily irreducible) representation $\pi$, define $\pi^{\sigma}$ as above. Then the assignment $\pi\mapsto \pi^{\sigma}$ is a covariant exact functor on the category of smooth representations of $\GPin(V)$. Indeed, this is an MVW-involution for $\GPin(V)$. Similarly, $\tau\mapsto\tau^{\sigma}$ is an MVW-involution for $\GSpin(V)$. It should be noted that the existence of an MVW-involution for $\GSpin(V)$ is also proven in \cite{Mondal-Nadimpalli} by a completely different method from ours. Also see \cite{Prasad} more on MVW-involution in general.
\end{rem}

\begin{rem}
If $n=2k-1$ and $\disc(V)=1$, then by Proposition \ref{prop:semidirect_product} we have $\GPin(V)\simeq\GSpin(V)\times\{1, \zeta\}$, where $\zeta^2=1$. Hence any $\pi\in\Irr(\GPin(V))$ is of the form $\tau\otimes\chi$, where $\tau=\pi|_{\GSpin(V)}\in\Irr(\GSpin(V))$ and $\chi=\omega_{\pi}|_{\{1, \zeta\}}$. Thus we must have $\pi^\vee\simeq\tau^\vee\otimes\chi$. But by the above theorem we must have $\tau^\vee=\omega_{\tau}^{-1}\otimes\tau$. Hence we must have
\[
\sign_{\pi}^k\omega_{\pi}^{-1}\otimes\pi\simeq (\omega_{\tau}^{-1}\otimes\tau)\otimes \chi.
\]
One can indeed verify this by using $N(\zeta)=(-1)^k\zeta^2=(-1)^k$, so that $(\sign_{\pi}^k\omega_{\pi}^{-1})(\zeta)=1$. The detail is left to the reader.
\end{rem}



\section{The group $\GPint(V)$.}


In this section, we define a group $\GPint(V)$, which is isomorphic to the direct product $\GPin(V)\times\{\pm 1\}$ but defined more intrinsically so that we can naturally reduce our situation to the classical group situations of \cite{AGRS} and \cite{Wal12}. We also recall the analogous groups for the classical groups treated in \cite{AGRS} and \cite{Wal12}. In this section, we deviate from our convention that $V$ is a quadratic space over $F$ but we allow $V$ to be other spaces.

\subsection{General linear group}
Let $V$ be a vector space over $F$ of $\dim_FV=n$, and fix a basis $\{e_1,\dots,e_n\}$ of $V$. Let $V^*$ be the dual space of $V$ and write $\{e_1^*,\dots,e_n^*\}$ for the dual basis. Note that for each $g\in\GL(V)$ there exists a unique $g^*\in\GL(V^*)$ such that $\la gv, v^*\ra=\la v, g^*v^*\ra$ for all $v\in V$ and $v^*\in V^*$, where $\la-,-\ra$ is the canonical pairing. We have the embedding
\[
\Delta:\GL(V)\longrightarrow\GL(V)\times\GL(V^*),\quad g\mapsto \Delta g:=(g, {g^*}^{-1}).
\]
Let
\[
u:V^*\longrightarrow V
\]
be defined by $u(e_i^*)=e_i$ and
\begin{equation}\label{eq:beta_for_GL}
\beta:V\oplus V^*\longrightarrow V\oplus V^*,\quad \beta=\begin{pmatrix}&u\\u^{-1}&\end{pmatrix},
\end{equation}
where the matrix is with respect to the fixed basis. We define the involution
\begin{equation}\label{eq:tau_for_GL}
\tau_V:\GL(V)\longrightarrow\GL(V),\quad g\mapsto g^t,
\end{equation}
where $g^t$ is the transpose of $g$ with respect to our fixed basis. The important property of $\tau_V$ that is used in \cite{AGRS} is that $\tau_V$ preserves the (semisimple) conjugacy classes. Also note that
\begin{equation}\label{eq:beta_conjugation_GL}
\beta\Delta g\beta^{-1}=\Delta (g^t)^{-1}.
\end{equation}

We define
\[
\GLt(V)=\big\la\Delta g,\, \beta\st g\in\GL(V)\big\ra\subseteq\Aut(V\oplus V^*),
\]
namely the group generated by $\Delta g$'s and $\beta$ inside $\Aut(V\oplus V^*)$. We then have the short exact sequence
\[
1\longrightarrow\GL(V)\longrightarrow\GLt(V)\xrightarrow{\;\,\chi\;\,}\{\pm1\}\longrightarrow 1,
\]
where the inclusion is $g\mapsto \Delta g$ and the surjection $\chi$ sends all the $\Delta g$'s to $1$ and $\beta$ to $-1$. Note that
\[
\GLt(V)\simeq\GL(V)\rtimes\{1, \beta\},
\]
where $\beta$ acts on $\GL(V)$ as inverse-transpose. We define an action of $\GLt(V)$ on the set $\GL(V)\times (V\oplus V^*)$ by
\begin{gather}\label{eq:action_of_GLt}
\begin{aligned}
\Delta g\cdot (h, v+v^*)&=(ghg^{-1},\, \Delta g(v+ v^*))\\
\beta\cdot (h, v+v^*)&=(\tau_V(h), \beta(v+v^*))=(h^t,\, \beta(v+v^*)).
\end{aligned}
\end{gather}
Here the action of $\beta$ on $\GL(V)$ is via the involution $\tau_V$. It is important that the action of $\beta$ preserves the semisimple conjugacy classes of $\GL(V)$. Also \eqref{eq:beta_conjugation_GL} guarantees that this is indeed an action.

Let
\[
W=\Span\{e_1,\dots, e_{n-1}\}\subseteq V,
\]
and set $e=e_n$, so that $V\oplus V^*=(W\oplus Fe)\oplus(W^*\oplus Fe^*)$. Note that $\GL(V)_e=\GL(W)$, where we recall that $\GL(V)_e$ is the stabilizer of $e$. We have
\[
\GLt(V)_{e+e^*}=\big\la \Delta g,\ \beta\st g\in\GL(W)\big\ra,
\]
and define
\[
\GLt(W):=\GLt(V)_{e+e^*}.
\]
Note that we have
\[
1\longrightarrow\GL(W)\longrightarrow\GLt(W)\xrightarrow{\;\,\chi\;\,}\{\pm1\}\longrightarrow 1,
\]
and
\[
\GLt(W)\simeq\GL(W)\rtimes\{1, \beta\},
\]
where the action of $\beta$ on $\GL(W)$ is the conjugation action inside $\GLt(W)$. (Note that by definition $\GLt(W)$ is the stabilizer $\GLt(V)_{e+e^*}$, and is not the $\GL(W)$ analogue of $\GLt(V)$, though they are certainly isomorphic. We prefer to view $\GLt(W)$ as the subgroup of $\GLt(V)$ stabilizing $e+e^*$.)

\subsection{Unitary and orthogonal group}
Let $(V, \la-,-\ra)$ be a Hermitian space over a quadratic extension $E$ of $F$ or a quadratic space over $F$, and $G(V)$ the corresponding isometry group, so that if $V$ is Hermitian then $G(V)=U(V)$ (unitary group), and if $V$ is quadratic then $G(V)=\OO(V)$ (orthogonal group). If $V$ is quadratic, we set $E=F$.  There is an $F$-linear map
\[
\beta:V\longrightarrow V
\]
such that $\la \beta v, \beta v'\ra=\la v', v\ra$ for all $v, v'\in V$. (Such $\beta$ differs by an element in $G(V)$; namely if $\beta'$ is another such $F$-linear map then there exists $h\in G(V)$ such that $\beta'=h\beta$.) We fix our $\beta$ as follows. Let $\{e_1,\dots, e_n\}$ be an orthogonal basis of $V$. Define
\begin{equation}\label{eq:beta_for_classical_group}
\beta:V\longrightarrow V,\quad \beta(a_1e_1+\cdots+a_ne_n)=\bar{a}_1e_1+\cdots+\bar{a_n}e_n,
\end{equation}
where $a_i\in E$ and the bar indicates the Galois conjugate. Note that $\beta^2=1$. If $G(V)$ is a unitary group, $\beta\notin G(V)$. If $G(V)$ is an orthogonal group, $\beta$ is simply the identity, which is in $G(V)$, but we consider $\beta$ not as an element in $G(V)$ but another element distinct from the element in $G(V)$.

We define
\[
\Gt(V)=\big\la g,\, \beta\st g\in G(V)\big\ra,
\]
namely the group generated by $G(V)$ and $\beta$. To be more precise, if $G(V)$ is unitary, $\Gt(V)$ is viewed inside $\Aut(V)$. If $G(V)$ is orthogonal, one may simply consider $\Gt(V)$ as the group generated by $G(V)$ and $\beta$ with the relations $g\beta=\beta g$ for all $g\in G(V)$ and $\beta^2=1$. We have the short exact sequence
\[
1\longrightarrow G(V)\longrightarrow\Gt(V)\xrightarrow{\;\,\chi\;\,}\{\pm1\}\longrightarrow 1,
\]
where the surjection $\chi$ sends $\beta$ to $-1$. Note that
\[
\Gt(V)\simeq G(V)\rtimes\{1, \beta\},
\]
where $\beta$ acts on $G(V)$ by conjugation, and hence if $G(V)$ is orthogonal, this is merely a direct product.

Let
\begin{equation}\label{eq:tau_for_classical}
\tau_V:G(V)\longrightarrow G(V),\quad \tau_V(g)=\beta g^{-1}\beta^{-1}.
\end{equation}
This is an involution on $G(V)$ which preserves the semisimple conjugacy classes of $G(V)$.

We define an action of $\Gt(V)$ on the set $G(V)\times V$ by
\begin{gather}\label{eq:action_of_Gt(V)}
\begin{aligned}
g\cdot (h, v)&=(ghg^{-1}, gv)\\
\beta\cdot(h, v)&=(\tau_V(h), -\beta v)=(\beta h^{-1}\beta^{-1}, -\beta v)
\end{aligned}
\end{gather}
for $g\in G(V)$ and $(h, v)\in G(V)\times V$. As in the $\GL(V)$ case, the action of $\beta$ on $G(V)$ is via $\tau_V$, so that this action preserves the semisimple conjugacy classes. One can readily verify that this is indeed an action.

Let $W=\{e_1,\dots,e_{n-1}\}$, so that $V=W\oplus Ee$, where we set $e=e_n$ as usual. We have $G(V)_e=G(W)$, and
\[
\Gt(V)_e=\big\la g,\, r_e\beta\st g\in G(W)\big\ra,
\]
where we recall that $r_e$ is the reflection in the hyperplane orthogonal to $e$, and define
\[
\Gt(W)\simeq \Gt(V)_e.
\]
Note that we have
\[
1\longrightarrow G(W)\longrightarrow\Gt(W)\xrightarrow{\;\,\chi\;\,}\{\pm1\}\longrightarrow 1,
\]
and
\[
\Gt(W)\simeq G(W)\rtimes\{1, r_e\beta\},
\]
where $r_e\beta$ acts on $G(W)$ by conjugation viewed inside $\Gt(V)$, and $\chi$ sends $r_e\beta$ to $-1$. It should be noted that $r_e$ and $\beta$ commute and $(r_e\beta)^2=1$. (Note that $\Gt(W)$ is not the group analogous to $\Gt(V)$, though certainly isomorphic to it.)


\subsection{Special orthogonal group}
We consider the special orthogonal group $\SO(V)$. We keep the notation of the previous subsection, assuming $V$ is quadratic. We define
\[
\SOt(V)=\big\la g,\, r_e^k\beta\st g\in\SO(V)\big\ra\subseteq\OOt(V),
\]
where we recall $k$ is such that $\dim_FV=2k$ or $2k-1$, and $r_e$ is the reflection in the hyperplane orthogonal to $e$. We have
\[
1\longrightarrow \SO(V)\longrightarrow\SOt(V)\xrightarrow{\;\,\chi\;\,}\{\pm1\}\longrightarrow 1,
\]
and
\[
\SOt(V)\simeq \SO(V)\rtimes\{1, r_e^k\beta\},
\]
where $r_e^k\beta$ acts on $\SO(V)$ by conjugation viewed inside $\OOt(V)$.

\begin{rem}
Let us compare our setup with that of Waldspurger (\cite[p.314]{Wal12}). He defines the group
\[
\Gb=\{(g, \epsilon)\in\OO(V)\times\{\pm 1\}\st \det g=\epsilon^k\}.
\]
One can readily verify that $\Gb$ is isomorphic to the direct product $\SO(V)\times\{\pm 1\}$ if $k$ is even, and isomorphic to $\OO(V)$ if $k$ is odd. One can then see that our $\SOt(V)$ is isomorphic to $\Gb$ via the map $g\mapsto (g, 1)$ and $r_e^k\beta\mapsto (r_e^k, -1)$.
\end{rem}

Note that the involution
\begin{equation}\label{eq:tau_for_special_orthogonal}
\SO(V)\longrightarrow \SO(V),\quad g\mapsto r_e^k\tau_V(g)r_e^{-k}=r_e^kg^{-1}r_e^{-k}.
\end{equation}
preserves the (semisimple) conjugacy classes of $\SO(V)$. (This is essentially the $\SO(V)$ analogue of Corollary \ref{cor:conjugate_GSpin}, and can be shown not just for the semisimple elements but for all elements.)

We define an action of $\SOt(V)$ on $\SO(V)\times V$ by restricting the action of $\OOt(V)$, so that
\begin{gather}\label{eq:action_of_SOt(V)}
\begin{aligned}
g\cdot (h, v)&=(ghg^{-1}, gv)\\
r_e^k\beta\cdot(h, v)&=(r_e^k h^{-1} r_e^{-k}, -r_e^k(v))
\end{aligned}
\end{gather}
for $g\in\SO(V)$ and $(h, v)\in\SO(V)\times V$. In particular, the action of $r_e^\beta$ is via the involution that preserves the semisimple classes in $\SO(V)$. (Indeed, the group $\SOt(V)$ is so chosen that we have this property.)

We then have
\[
\SOt(V)_e=\big\la g,\, r_{e_{n-1}}^{k-1}r_e\beta\st g\in\SO(W)\big\ra,
\]
and define
\[
\SOt(W):=\SOt(V)_e.
\]
We have
\[
1\longrightarrow \SO(W)\longrightarrow\SOt(W)\xrightarrow{\;\,\chi\;\,}\{\pm1\}\longrightarrow 1,
\]
and
\[
\SOt(W)\simeq \SO(W)\rtimes\{1, r_{e_{n-1}}^{k-1}r_e\beta\},
\]
where $r_{e_{n-1}}^{k-1}r_e\beta$ acts by conjugation viewed inside $\OOt(V)$, and $\chi$ sends $r_{e_{n-1}}^{k-1}r_e\beta$ to $-1$. It should be noted that $r_{e_{n-1}}$, $r_e$ and $\beta$ all commute and $(r_{e_{n-1}}^{k-1}r_e\beta)^2=1$. (Note that $\SOt(W)$ is not the $\SO(W)$ analogue of $\SOt(V)$. This time, it is not completely immediate that it is even isomorphic to the $\SO(W)$ analogue of $\SOt(V)$, though they are indeed isomorphic. But since we do not use this fact, we omit the proof.)

\subsection{Vanishing theorem for classical groups}
Let
\begin{align*}
G(V)&=\GL(V),\ \UU(V),\ \OO(V)\ \text{or}\ \SO(V);\\
\Gt(W)&=\GLt(W),\ \Ut(W),\ \OOt(W)\ \text{or}\ \SOt(W),
\end{align*}
respectively. We can write
\[
\Gt(W)=G(W)\rtimes\{1, \beta_W\},
\]
where
\[
\beta_W=\begin{cases}
\beta,&\text{for $\GL(V)$};\\
r_e\beta,&\text{for $\UU(V)$};\\
r_e\beta,&\text{for $\OO(V)$};\\
r_{e_{n-1}}^{k-1}r_e\beta&\text{for $\SO(V)$},
\end{cases}
\]
where $\beta_W$ acts on $G(W)$ by conjugation viewed inside $\Gt(V)$.

We define the involution
\[
\tau_W:G(V)\longrightarrow G(V)
\]
by the action of $\beta_W$ as defined in \eqref{eq:action_of_GLt}, \eqref{eq:action_of_Gt(V)} and \eqref{eq:action_of_SOt(V)}, respectively. Namely,
\[
\tau_W(g)=\begin{cases}
\tau_V(g)=g^t,&\text{for $\GL(V)$};\\
(r_e\beta) g^{-1}(r_e\beta)^{-1},&\text{for $\UU(V)$};\\
r_eg^{-1}r_e^{-1},&\text{for $\OO(V)$};\\
(r_{e_{n-1}}^{k-1}r_e)g^{-1}(r_{e_{n-1}}^{k-1}r_e)^{-1},&\text{for $\SO(V)$}.
\end{cases}
\]
We let $\Gt(W)$ act on the set $G(V)$ by using this action of $\beta_W$, so that
\[
h\cdot g=hgh^{-1}\qand \beta_W\cdot g=\tau_W(g),\qquad h\in G(W),\; g\in G(V).
\]
This is indeed an action. Note that here $G(V)$ is viewed merely as a set instead of a group. (Let us note that the involution $\tau_W$ is not explicitly mentioned in \cite{AGRS} or \cite{Wal12}.)

Aizenbud, Gourevitch, Rallis and Schiffmann (\cite{AGRS}) and Waldspurger (\cite{Wal12}) proved their multiplicity-at-most-one theorem by reducing it to the following non-existence of invariant distributions.
\begin{prop}\label{pro:AGRS_non-exitence_of_distribution}
If we denote by $\Scal'(G(V))^{\Gt(W), \chi}$ the space of the distributions on which $\Gt(W)$ acts via $\chi$ then
\[
\Scal'(G(V))^{\Gt(W), \chi}=0.
\]
Namely, every $G(W)$-invariant distribution on $G(V)$ is also invariant under the involution $\tau_W$.
\end{prop}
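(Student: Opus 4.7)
The plan is to reduce this directly to the main distribution-vanishing results of \cite{AGRS} for $G(V) = \GL(V), U(V), \OO(V)$ and of \cite{Wal12} for $G(V) = \SO(V)$, since the proposition is essentially a reformulation of their reductions. First I would unpack the definitions: because
\[
\Gt(W) = G(W) \rtimes \{1, \beta_W\}
\]
with $\chi$ trivial on $G(W)$ and $\chi(\beta_W) = -1$, a distribution $T$ lies in $\Scal'(G(V))^{\Gt(W), \chi}$ if and only if $T$ is $G(W)$-invariant under the conjugation action and $\tau_W$-anti-invariant in the sense that $(\tau_W)_* T = -T$, where $\tau_W$ is the involution on $G(V)$ defined in each case via \eqref{eq:tau_for_GL}, \eqref{eq:tau_for_classical}, or \eqref{eq:tau_for_special_orthogonal}. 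Here I am using that the action of $\Gt(W)$ on the set $G(V)$ defined just above the proposition factors the $\beta_W$-part through $\tau_W$ by construction.

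Next I would invoke the cited theorems. The technical core of \cite{AGRS} is precisely the statement that in the classical cases $G(V) = \GL(V), U(V), \OO(V)$, every $G(W)$-invariant distribution on $G(V)$ is automatically invariant under the appropriate involution—transpose for $\GL$, and the involution induced by $r_e \beta$ for $U$ and $\OO$. Waldspurger \cite{Wal12} proves the analogous statement for $\SO(V)$ using the involution corresponding to $r_{e_{n-1}}^{k-1} r_e \beta$ (equivalently, the second-coordinate sign on his group $\Gb$). In all four cases these involutions agree on the nose with our $\tau_W$. Applying the cited result to our $T$ gives $(\tau_W)_* T = T$; combined with $(\tau_W)_* T = -T$ from the $\chi$-equivariance, this forces $2T = 0$ and hence $T = 0$.

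The only step of real substance is therefore the identification of our $\tau_W$ with the involution appearing in \cite{AGRS} and \cite{Wal12}. For $\GL$, $U$, and $\OO$ this matching is immediate from the definitions of $\Gt(W)$, $\beta_W$, and $\tau_W$. For $\SO$, the explicit isomorphism $\SOt(V) \simeq \Gb$ from the remark following the definition of $\SOt(V)$—sending $g \mapsto (g, 1)$ and $r_e^k \beta \mapsto (r_e^k, -1)$—together with the compatible isomorphism of the stabilizers $\SOt(W) \simeq \Gb_e$, translates Waldspurger's distribution-vanishing theorem verbatim into our formulation. With this dictionary in place the proposition is a formal consequence of the quoted results, and no independent analysis of the distributions is required.
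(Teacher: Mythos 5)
Your proposal is correct and takes essentially the same approach as the paper: Proposition \ref{pro:AGRS_non-exitence_of_distribution} is stated there without an argument, merely as a citation to \cite{AGRS} and \cite{Wal12}, and your elaboration — unpacking the $\chi$-equivariance into $G(W)$-invariance plus $\tau_W$-anti-invariance, matching $\tau_W$ with the involutions used in the cited works (including the $\SOt(V)\simeq\Gb$ dictionary in the $\SO$ case), and concluding $T=0$ from $(\tau_W)_*T=T=-T$ — is the intended reading. One small imprecision worth noting: you call the statement of Proposition \ref{pro:AGRS_non-exitence_of_distribution} ``the technical core'' of \cite{AGRS}, whereas the paper reserves that role for the stronger Proposition \ref{pro:AGRS_non-exitence_of_distribution_times_V} (vanishing on $G(V)\times V$, respectively $\GL(V)\times(V\oplus V^*)$), from which Proposition \ref{pro:AGRS_non-exitence_of_distribution} is then deduced in \cite{AGRS} and \cite{Wal12} via Frobenius descent; but this does not affect the validity of your citation, since both statements are indeed established in those references.
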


This proposition is rephrased as follows. Let $\Scal'(G(V))^{G(W)}$ be the space of $G(W)$-invariant distributions on $G(V)$. Since $\tau_W(G(W))=G(W)$ this space is closed under the action of $\tau_W$. Further because $\tau_W^2=1$, we have the eigenspace decomposition
\[
\Scal'(G(V))^{G(W)}=\Scal'(G(V))^{G(W), +}\oplus\Scal'(G(V))^{G(W), -},
\]
where $\tau_W$ acts as $\pm 1$ on $\Scal'(G(V))^{G(W), \pm}$, respectively. Then
\[
\Scal'(G(V))^{\Gt(W),\chi}=\Scal'(G(V))^{G(W), -}.
\]
Namely, the proposition asserts that the $-1$-eigenspace of $\tau_W$ in $\Scal'(G(V))^{G(W)}$ is zero, which means that every $G(W)$-invariant distribution on $G(V)$ is also invariant under $\tau_W$.

It has been shown by \cite{AGRS} and \cite{Wal12} that the vanishing of this space of distributions follows from the following vanishing theorem, which is the main technical theorem proven in \cite{AGRS} and \cite{Wal12}.
\begin{prop}\label{pro:AGRS_non-exitence_of_distribution_times_V}
Assume $G(V)=\GL(V)$. Then
\[
\Scal'(\GL(V)\times (V\oplus V^*))^{\GLt(V), \chi}=0.
\]
Assume $G(V)=\UU(V), \OO(V)$ or $\SO(V)$. Then
\[
\Scal'(G(V)\times V)^{\Gt(V), \chi}=0.
\]
\end{prop}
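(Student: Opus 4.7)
The plan is to follow the geometric method of \cite{AGRS} and \cite{Wal12}, which reduces the vanishing to a statement about distributions supported on nilpotent sets. First I would invoke Bernstein's localization principle along the categorical quotient map $G(V)\times V \to (G(V)\times V)/\!/G(V)$ (or the analogous map in the $\GL$ case), whose fibers over closed points correspond to semisimple conjugacy classes in $G(V)$. Since the involution $\tau_V$ preserves semisimple conjugacy classes (this is precisely what was established for the classical groups in the proof of MVW, Lemma \ref{lemma:MVW_orthogonal_group}, and its $\GL$ and $U$ analogues), the action of $\Gt(V)$ preserves the stratification, and it suffices to show vanishing of the $(\Gt(V),\chi)$-equivariant distributions supported over a single such stratum.

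Next, for each semisimple element $s\in G(V)$, I would apply the $p$-adic analogue of the Luna \'etale slice theorem (Harish-Chandra descent / Frobenius descent) to pass from distributions on a neighborhood of the orbit of $(s,v_0)$ in $G(V)\times V$ to distributions on the slice $\mathfrak{g}(V)_s \times V_s$, where $\mathfrak{g}(V)_s$ is the Lie algebra of the centralizer and $V_s$ is an appropriate transverse piece. Using the structural decomposition of the centralizer $G(V)_s$ as a product of smaller groups of classical type (as in Proposition \ref{prop:centralizer_of_h}), together with the compatible decomposition of the slice, the problem splits across the factors. An induction on $\dim_F V$ then reduces everything to the case where the only semisimple element to worry about is central, so that the relevant distribution is supported on $\mathcal{N}\times V$ with $\mathcal{N}\subseteq G(V)$ (or $\mathfrak{g}(V)$) the nilpotent (or unipotent) set.

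The hard step, which is the main technical obstacle, is precisely the base case: showing that there is no nonzero $(\Gt(V),\chi)$-equivariant distribution on $\mathcal{N}\times V$. The argument here proceeds by a Fourier transform on the vector space factor (and on the Lie algebra after passing to the logarithm near the identity) combined with an uncertainty principle: one shows that any such distribution $D$ must have both $D$ and its Fourier transform $\widehat{D}$ supported on a proper nilpotent subvariety, and then analyzes the wavefront sets to conclude that $D=0$. This is carried out in \cite{AGRS} for $\GL$, $U$ and $\OO$, and the case of $\SO$ requires Waldspurger's additional work in \cite{Wal12} to accommodate the fact that the involution $\tau_W$ defined via $\beta_W = r_{e_{n-1}}^{k-1} r_e \beta$ is subtler (individual semisimple conjugacy classes in $\SO(V)$ are not always $\tau_W$-stable, but their $\OO(V)$-saturations are, which is what the choice of $\beta_W$ encodes). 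The main obstacle in adapting the argument to our $\GPin(V)$ situation later will be precisely that we must replace this nilpotent vanishing argument by a genuine reduction to the classical group cases already established here, since the geometric Fourier-transform step does not directly transplant to the Clifford algebra setting.
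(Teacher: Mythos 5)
You should know that the paper does not prove this proposition at all: it is stated as the main technical theorem of \cite{AGRS} and \cite{Wal12} and is simply imported by citation, so your decision to defer the hard nilpotent-support/Fourier step to those same references is exactly what the paper does, and your sketch of their method (localization over the space of characteristic polynomials, Harish-Chandra/Frobenius descent to centralizers, induction on $\dim_F V$, reduction to unipotent support, Fourier transform and homogeneity arguments) is a fair outline of it. One correction to your commentary on the $\SO$ case: the involution relevant to this proposition is the one attached to $r_e^k\beta$ in $\SOt(V)$, namely $g\mapsto r_e^k g^{-1}r_e^{-k}$ as in \eqref{eq:tau_for_special_orthogonal} (not $\tau_W$ built from $r_{e_{n-1}}^{k-1}r_e\beta$, which only enters the statement with $W$), and that involution does preserve every conjugacy class of $\SO(V)$; the reason Waldspurger's separate treatment is needed is rather that $g$ and $g^{-1}$ need not be conjugate within $\SO(V)$ itself, so one cannot quote the $\OO(V)$ result directly and must work with the disconnected group $\SOt(V)$.
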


Let us note that in this proposition the space $W$ no longer appears, and the involution $\tau_W$ does not play any direct role in the proof after all. Probably, this is why the involution $\tau_W$ is not explicitly mentioned in \cite{AGRS} or \cite{Wal12}.

\quad

\subsection{GPin}
We need to prove the analogue of Proposition \ref{pro:AGRS_non-exitence_of_distribution} for $\GPin(V)$. In this subsection we set up our notation for $\GPin(V)$. So we go back to our convention that $(V, q)$ is a quadratic space over $F$ as before.

Since the conjugation action of $\GPin(V)$ on itself factors through $\OO(V)$, the $\GPin$ analogue of $\OOt(V)$ might as well be defined to be equal to $\OOt(V)$. However, to make the distinction between the two, we define
\[
\GPint(V)=\big\la g,\, \beta\st g\in\GPin(V)\big\ra
\]
with the relations $g\beta=\beta g$ for all $g\in\GPin(V)$ and $\beta^2=1$. We have the short exact sequence
\[
1\longrightarrow \GPin(V)\longrightarrow\GPint(V)\xrightarrow{\;\,\chi\;\,}\{\pm1\}\longrightarrow 1,
\]
where the surjection $\chi$ sends $\beta$ to $-1$, and
\[
\GPint(V)\simeq\GPin(V)\times\{1, \beta\},
\]
which is a direct product.

Recall in \eqref{eq:Clifford_involution} that we have defined the involution
\[
\sigma_V:\GPin(V)\longrightarrow\GPin(V),\quad
\sigma_V(g)=
\begin{cases}g^*,&\text{if $n=2k$};\\
\sign(g)^{k+1}g^*,&\text{if $n=2k-1$},
\end{cases}
\]
and have shown that $\sigma_V$ preserves the semisimple conjugacy classes of $\GPin(V)$.
We then define the action of $\GPint(V)$ on $\GPin(V)\times V$ (viewed merely as a set) by
\begin{gather}\label{eq:action_on_V_GPin}
\begin{aligned}
g\cdot (h, v)&=(ghg^{-1}, P(g)v)\\
\beta\cdot (h, v)&=(\sigma_V(h), -v)
\end{aligned}
\end{gather}
for $g\in\GPin(V)$ and $(h, v)\in\GPin(V)\times V$. Note that $\sigma_V(ghg^{-1})=g\sigma_V(h)g^{-1}$ because $\sigma_V(g)=\pm N(g)g^{-1}$, which implies that the above action is indeed an action. Just as in the case of classical groups, it is important that the action of $\beta$ preserves the semisimple conjugacy classes of $\GPin(V)$.

Let $W=\{e_1,\dots, e_{n-1}\}$, so that $V=W\oplus Fe$ with $e=e_n$ as before. We then have $\GPin(V)_e=\GPin(W)$ and
\[
\GPint(V)_e=\big\la g,\, e\beta\st g\in\GPin(W)\big\ra.
\]
We define
\[
\GPint(W):=\GPint(V)_e.
\]
We then have
\[
1\longrightarrow \GPin(W)\longrightarrow\GPint(W)\xrightarrow{\;\,\chi\;\,}\{\pm1\}\longrightarrow 1,
\]
where the surjection $\chi$ sends $e\beta$ to $-1$, and
\[
\GPint(W)\simeq\GPin(W)\rtimes\{1, e\beta\},
\]
where $e\beta$ acts on $\GPin(W)$ by conjugation viewed inside $\GPint(V)$. Note that $e$ and $\beta$ commute and $(e\beta)^2=1$. (Just as the classical group case, $\GPint(W)$ is not $\GPin(W)$ analogue $\GPin(V)$. Moreover, $\GPint(W)$ is not even isomorphic to the $\GPin(W)$ analogue of $\GPin(V)$. This is because the element $e$ does not commute with $\GPin(W)$ but only ``almost commutes" as in Lemma \ref{lemma:commuting_elements_in_GPin}. Yet, this fact will play no role in this paper.)

We define an involution
\begin{equation}\label{eq:tau_e_on_GPin}
\tau_W:\GPin(V)\longrightarrow\GPin(V),\quad \tau_W(g)=e\sigma_V(g)e^{-1}
\end{equation}
for $g\in\GPin(V)$. This involution is precisely the action of the element $e\beta\in\GPint(V)$. Since $e$ is orthogonal to $W$, Lemma \ref{lemma:commuting_elements_in_GPin} implies that $\tau_W(\GPin(W))=\GPin(W)$.


The main technical goal of this paper is to prove the following theorem, which is precisely the analogue of Proposition \ref{pro:AGRS_non-exitence_of_distribution_times_V}.
\begin{theorem}\label{thm:main_theorem_distribution}
Let $\GPint(W)\simeq\GPin(W)\rtimes\{1, e\beta\}$ act on $\GPin(V)$ (viewed merely as a set) by letting $\GPin(W)$ act by conjugation and $e\beta$ by $\tau_W$. Then we have
\[
\Scal'(\GPin(V))^{\GPint(W), \chi}=0.
\]
In other words, every $\GPin(W)$-invariant distribution on $\GPin(V)$ is also invariant under the involution $\tau_W$.
\end{theorem}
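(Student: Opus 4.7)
The plan is to follow the Aizenbud--Gourevitch--Rallis--Schiffmann strategy, using Bernstein's localization principle combined with Harish-Chandra/Frobenius descent to reduce the assertion to a statement at each closed semisimple $\GPin(W)$-orbit, and then showing that this local statement factors through the classical group vanishing Proposition \ref{pro:AGRS_non-exitence_of_distribution_times_V}. The preliminary observation is that, by Theorem \ref{thm:semisimple_conjugate} combined with $e \in \GPin(V)$, the involution $\tau_W = \mathrm{Ad}(e)\circ\sigma_V$ preserves the semisimple conjugacy classes of $\GPin(V)$; consequently the semisimple stratification of $\GPin(V)$ under $\GPin(W)$-conjugation is stable under the full $\GPint(W)$-action, and Bernstein's principle applies.

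For a given semisimple $g \in \GPin(V)$ with $h = P(g)$, Proposition \ref{prop:centralizer_of_h} and Lemma \ref{lemma:centralizer_GPin} yield a decomposition
\[
V = V_1 \oplus \cdots \oplus V_m \oplus V_+ \oplus V_-,\qquad P(\GPin(V)_g) \subseteq G_1\times\cdots\times G_m\times\OO(V_+)\times\OO(V_-),
\]
where each $G_i$ is a (restriction of scalars of a) general linear or unitary group. After a reduction that places $e$ in good position with respect to this decomposition (so that $W$ inherits a compatible decomposition), the normal slice $S_g$ to the orbit of $g$, together with its stabilizer action and the $\tau_W$-action, splits as a product of slices indexed by the factors $V_i$, $V_+$, $V_-$. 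On a $\GL$-type factor the slice is modeled on $\GL(X_i)\times (X_i\oplus X_i^*)$ with the $\GLt(X_i)$-action of Section~5; on a unitary-type factor it is modeled on $U_{A_i}(V_i)\times V_i$ with the $\Ut(V_i)$-action; and on the $V_\pm$-factors it is modeled on $\OO(V_\pm)\times V_\pm$ with the $\OOt(V_\pm)$-action. On each factor, Proposition \ref{pro:AGRS_non-exitence_of_distribution_times_V} forces the descended distribution to vanish.

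Putting these ingredients together: a $\GPint(W)$-distribution on which $e\beta$ acts by $-1$ descends, near each semisimple orbit, to a distribution on $S_g$ that is equivariant under the product of classical groups and changes sign under the corresponding involution $\tau_V$ on at least one factor. The AGRS/Waldspurger result then forces the descended distribution to be zero on that slice, hence on a $\GPint(W)$-invariant neighborhood of the orbit; Bernstein's localization then delivers the global vanishing.

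The main obstacle will be the careful bookkeeping required to match the $\GPin$-level data with the classical group setup of Section~5. Three points deserve particular attention: (i) handling the connected central $\GL_1$-factor — here Lemma \ref{lemma:sigma_V_fixes_center}, which says $\sigma_V$ (and hence $\tau_W$) acts trivially on $Z_{\GPin(V)}$, ensures that contributions from the center are automatically killed by the $\chi$-eigenspace condition; (ii) choosing $e$ so that it respects the decomposition $V = V'\oplus V_+ \oplus V_-$ — a priori the anisotropic line $Fe$ and the spectral decomposition of $P(g)$ are unrelated, so a separate conjugation argument is needed to bring them into a compatible position; and (iii) reconciling the sign twists built into $\sigma_V$ (the $\sign(g)^{k+1}$ for $n=2k-1$, and the $\pm$ ambiguity of \eqref{eq:up_to_sign}) with the classical involutions $\tau_V$ on each factor. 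I expect this third point — essentially a precise identification of the $\tau_W$-action on the slice with the standard classical-group involution, factor by factor — to be the technical heart of the proof.
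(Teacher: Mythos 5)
Your toolkit is the right one — Bernstein's localization, Frobenius descent, and a product decomposition of the centralizer feeding into Proposition \ref{pro:AGRS_non-exitence_of_distribution_times_V} — but the architecture is in the wrong order, and this is precisely why your point (ii) looks like an obstacle. As written, you propose to do Harish-Chandra descent at semisimple $\GPin(W)$-orbits inside $\GPin(V)$ while keeping $W$, $e$, and the involution $\tau_W=\Ad(e)\circ\sigma_V$ in the picture, and then to worry about "placing $e$ in good position with respect to the decomposition $V=V'\oplus V_+\oplus V_-$." That compatibility problem is real, and your framework does not resolve it: the line $Fe$ and the $P(g)$-isotypic decomposition of $V$ can be completely skew, and no conjugation by $\GPin(W)$ (which must fix $e$) repairs this. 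Moreover, the stabilizers along $\GPin(W)$-orbits are intersections $\GPin(W)\cap\GPin(V)_g$, which are far less tractable than $\GPin(V)_g$ itself.

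The move you are missing is to eliminate $W$ and $e$ \emph{before} any orbit descent. Set $X=\{(g,v)\in\GPin(V)\times V : \la v,v\ra=\la e,e\ra\}$ and project onto the sphere $Y=\{v : \la v,v\ra=\la e,e\ra\}$; Frobenius descent and the identity $\GPint(V)_e=\GPint(W)$ give an inclusion
\[
\Scal'(\GPin(V))^{\GPint(W),\chi}\hookrightarrow\Scal'(\GPin(V)\times V)^{\GPint(V),\chi}
\]
(this is Proposition \ref{prop:removing_W_from_vanishing_assertion}). After this step the vector $e$, the subspace $W$, and the involution $\tau_W$ disappear entirely: the ambient group becomes $\GPint(V)$, with $\beta$ acting by $\sigma_V$ on $\GPin(V)$ and by $-1$ on the vector variable. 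Only then does one apply Bernstein's localization twice (first to the characteristic-polynomial map, then to semisimple parts) and a second Frobenius descent at semisimple $P(g)\in\OO(V)$, and there is no longer any question of $e$ versus the spectral decomposition of $P(g)$ because $e$ is gone. This is the same device underlying \cite{AGRS} and \cite{Wal12}, and it is exactly why the involution $\tau_W$ is never mentioned explicitly in those papers.

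Two smaller points. Your worry (iii) about the sign ambiguity in \eqref{eq:up_to_sign} is already resolved by Theorem \ref{thm:semisimple_conjugate}, which shows the sign is always $+$ for semisimple $g$; just invoke it. The genuine sign bookkeeping lives elsewhere: choosing the conjugating element $\gamma$ in \eqref{eq:conjugating_element_gamma} so that the induced involution on each factor of the centralizer matches the $\Gt_i$- or $\SOt$-involutions of Section~5 (with $\SO(V_\pm)$ rather than $\OO(V_\pm)$ where Lemma \ref{lemma:centralizer_GPin} forces it). Your instinct that this is a pressure point is correct. Finally, your treatment of the central $\GL_1$ is not quite right: the relevant fact is not that $\sigma_V$ fixes $Z^\circ$ but that $Z^\circ$ lies in the kernel of both the conjugation action and of $P$, so after one additional Bernstein localization over the $Z^\circ$-coordinate the connected center simply drops out of the analysis.
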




\section{Reduction to the vanishing of distributions}


In this section, we reduce our main theorem to the above vanishing theorem of the distributions (Theorem \ref{thm:main_theorem_distribution}). The key technical ingredient is the following, which is \cite[Corollary 1.1]{AGRS}.
\begin{lemma}\label{lemma:vanishing_involution_implies_at_most_one_first_lemma}
Let $G$ be an lctd group and $H\subseteq G$ a closed subgroup, both unimodular. Assume there exits an involution $\sigma:G\to G$ such that $\sigma(H)=H$ and every distribution on $G$ invariant under the conjugation action of $H$ is also fixed by $\sigma$; namely if $T\in\Scal'(G)^{H}$, then $\sigma\cdot T=T$, where the action of $\sigma$ on $T$ is defined in the obvious way. Then for all $\pi\in\Irr(G)$ and $\tau\in\Irr(H)$, we have
\[
\dim_{\CC}\Hom_H(\pi,\, \tau^\vee)\cdot \dim_{\CC}\Hom_H(\pi^\vee,\, \tau)\leq 1,
\]
where $\pi^\vee$ and $\tau^\vee$ are the contragredients.
\end{lemma}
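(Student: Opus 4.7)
This is a standard Gelfand--Kazhdan argument, and indeed the statement is \cite[Cor.~1.1]{AGRS}; the plan is to follow their outline. Set $A = \Hom_H(\pi, \tau^\vee)$ and $B = \Hom_H(\pi^\vee, \tau)$. Since $\tau$ and $\tau^\vee$ are irreducible $H$-representations, any nonzero element of $A$ or $B$ is automatically surjective. Given nonzero $\ell_1 \in A$ and $\ell_2 \in B$, I would form the composition
\[
\pi \otimes \pi^\vee \xrightarrow{\,\ell_1 \otimes \ell_2\,} \tau^\vee \otimes \tau \longrightarrow \C,
\]
where the last map is the canonical pairing; this yields a nonzero $H$-diagonal-invariant linear functional $\Phi_{\ell_1, \ell_2}$ on $\pi \otimes \pi^\vee$. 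Using admissibility of $\pi$ to identify $\pi \otimes \pi^\vee$ with the finite-rank endomorphisms of $\pi$, the matrix coefficient mechanism then translates $\Phi_{\ell_1, \ell_2}$ into an $H$-conjugation-invariant distribution $T_{\ell_1, \ell_2} \in \Scal'(G)^H$.

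The bilinear assignment $(\ell_1, \ell_2) \mapsto T_{\ell_1, \ell_2}$ extends to a linear map $A \otimes B \to \Scal'(G)^H$, which I would first check is injective: picking bases $\{\ell_i\}$ of $A$ and $\{\ell_j'\}$ of $B$, a kernel element $\sum c_{ij}\, \ell_i \otimes \ell_j'$ gives a vanishing bilinear form on $\pi \times \pi^\vee$, and surjectivity of each $\ell_j'$ together with the irreducibility of $\tau$ then forces $c_{ij} = 0$ for all $i, j$. Next, applying the hypothesis that every $T \in \Scal'(G)^H$ is fixed by $\sigma$, and unwinding the action of $\sigma$ on the matrix coefficient distribution $T_{\ell_1, \ell_2}$ in terms of the pair $(\ell_1, \ell_2)$, one obtains a duality relating $A$ and $B$ (essentially, $\sigma$ interchanges the roles of $\pi$ and $\pi^\vee$, hence those of $A$ and $B$). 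Combining this symmetry with Schur's lemma $\End_G(\pi) = \C$, the image of $A \otimes B$ in $\Scal'(G)^H$ must be at most one-dimensional, and the injectivity step then yields $\dim A \cdot \dim B \leq 1$, which is exactly the conclusion.

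The delicate point I expect to be the main obstacle is the last step: precisely tracking how the action of $\sigma$ on distributions descends to a constraint on the pair $(\ell_1, \ell_2)$, and then extracting a strong enough symmetry to collapse the image of $A \otimes B$ down to dimension one. Both the admissibility of $\pi$ (which identifies $\pi \otimes \pi^\vee$ with finite-rank endomorphisms and so makes the matrix coefficient distribution well-defined) and its irreducibility as a $G$-representation (which makes Schur's lemma available at the end) are essential inputs here.
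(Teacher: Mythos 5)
The paper does not actually reprove this lemma: it quotes it verbatim as \cite[Corollary 1.1]{AGRS}, which you correctly identify, so for the purposes of this paper a citation suffices. Your set-up does follow the Gelfand--Kazhdan mechanism underlying that result: forming, from $\ell_1\in A$ and $\ell_2\in B$, the $H$-invariant functional $B_{\ell_1,\ell_2}$ on $\pi\otimes\pi^\vee$ and then the $H$-conjugation-invariant distribution $T_{\ell_1,\ell_2}(f)=B_{\ell_1,\ell_2}(\pi(f))$ is exactly the right object, and the injectivity of $A\otimes B\to\Scal'(G)^{H}$ is provable (though your justification is slightly short: surjectivity of each individual $\ell_j'$ is not enough, one needs the joint map $(\ell_1',\dots,\ell_m')\colon\pi^\vee\to\tau^{\oplus m}$ to be surjective, which follows from linear independence together with Schur's lemma $\End_H(\tau)=\C$).

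The genuine gap is the endgame. Since $f\mapsto\pi(f)$ maps $\Scal(G)$ onto the finite-rank smooth operators $\pi\otimes\pi^\vee$, your injectivity step shows that the image of $A\otimes B$ in $\Scal'(G)^{H}$ has dimension exactly $\dim A\cdot\dim B$; hence the claim ``the image is at most one-dimensional'' is literally equivalent to the conclusion, and deducing it from ``$\sigma$-invariance plus Schur'' is circular unless a mechanism is supplied. Note also that the bare fact that each $T_{\ell_1,\ell_2}$ is fixed by $\sigma$ carries no information on its own: under the hypothesis \emph{every} element of $\Scal'(G)^{H}$ is $\sigma$-fixed. The actual content of the Gelfand--Kazhdan argument, which is missing from the sketch, is (i) the explicit computation that $\sigma\cdot T^{\pi}_{\ell_1,\ell_2}$ is again a generalized matrix-coefficient distribution, namely the one attached to $(\pi^\vee)^{\sigma}$ (here $\sigma(H)=H$ is what makes $\ell_2,\ell_1$ equivariant for the twisted representations) with the roles of $\ell_1$ and $\ell_2$ exchanged; (ii) the separation fact that a nonzero distribution of the form $f\mapsto\lambda(\rho(f))$, $\rho$ irreducible admissible, determines $\rho$ (via its kernel ideal in the Hecke algebra), which forces $(\pi^\vee)^{\sigma}\simeq\pi$ and puts the two sides on the same representation; and (iii) the uniqueness trick (comparing kernels of the maps $\Scal(G)\to\pi$, resp.\ $\pi^\vee$, cut out by $\ell_2$, resp.\ $\ell_1$, and applying Schur's lemma), or equivalently the reduction to the classical criterion for the pair $(G\times H,\Delta H)$ as in \cite{AGRS}, where unimodularity of $G$ and $H$ is used. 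You flag this step yourself as the main obstacle; as written, the proposal establishes the framework and the injectivity, but not the inequality.
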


By using this lemma, we have the following.
\begin{prop}\label{prop:vanishing_implies_main_theorem}
Theorem \ref{thm:main_theorem_distribution} (vanishing theorem of distributions) implies Theorem \ref{thm:A} (multiplicity-at-most-one theorem) for $\GPin$.
\end{prop}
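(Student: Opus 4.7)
The plan is to apply Lemma \ref{lemma:vanishing_involution_implies_at_most_one_first_lemma} with $G=\GPin(V)$, $H=\GPin(W)$, and $\sigma=\tau_W$, the involution on $G$ defined in \eqref{eq:tau_e_on_GPin}. First I would verify the hypotheses of the lemma: the condition $\tau_W(H)=H$ follows from the observation made just after \eqref{eq:tau_e_on_GPin}, namely that $\sigma_V$ preserves the Clifford subalgebra $C(W)\subseteq C(V)$ (and hence $\GPin(W)$), while conjugation by $e$ preserves $\GPin(W)$ by Lemma \ref{lemma:commuting_elements_in_GPin}; both $G$ and $H$ are reductive $p$-adic groups and hence unimodular; and the hypothesis that every $\GPin(W)$-invariant distribution on $\GPin(V)$ is $\tau_W$-invariant is exactly Theorem \ref{thm:main_theorem_distribution}. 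The lemma then yields the product inequality
\[
\dim_{\CC}\Hom_H(\pi,\tau^\vee)\cdot\dim_{\CC}\Hom_H(\pi^\vee,\tau)\leq 1.
\]

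To upgrade this to the desired bound on $\dim_{\CC}\Hom_H(\pi,\tau)$, I would use the contragredient formula (Theorem \ref{thm:contragredient_GPin}) to write $\pi^\vee\simeq\chi_\pi\otimes\pi$ and $\tau^\vee\simeq\chi_\tau\otimes\tau$ for explicit characters $\chi_\pi$ of $G$ and $\chi_\tau$ of $H$ built from the Clifford norm $N$ and the sign character $\sign$, both of which are compatible with the inclusion $\GPin(W)\subseteq\GPin(V)$. Rewriting the two factors as $\Hom_H(\pi,\tau^\vee)=\Hom_H(\pi,\chi_\tau\otimes\tau)$ and $\Hom_H(\pi^\vee,\tau)=\Hom_H(\pi,\chi_\pi^{-1}|_H\otimes\tau)$, and then replacing $\tau$ by $\chi_\tau^{-1}\otimes\tau$ in the inequality, I obtain
\[
\dim_{\CC}\Hom_H(\pi,\tau)\cdot\dim_{\CC}\Hom_H(\pi,(\chi_\pi^{-1}|_H\chi_\tau)\otimes\tau)\leq 1.
\]

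Next I would analyze the character $\chi_\pi^{-1}|_H\chi_\tau$, assuming the necessary central-character matching $\omega_\pi=\omega_\tau$ on $Z^\circ$ (else the Hom space is zero). A direct computation with the explicit forms of $\chi_\pi$ and $\chi_\tau$, using $N(z)=z^2$ on $Z^\circ$, shows that this character is either $\one$ or the sign character $\sign$, and that the second alternative occurs precisely when $k$ is odd and $\omega_\pi(-1)=\omega_\tau(-1)=-1$. In the trivial case the squared bound $\dim_{\CC}\Hom_H(\pi,\tau)^2\leq 1$ is immediate. In the sign case, since $\dim V$ and $\dim W$ have opposite parity, exactly one of $\pi$ or $\tau$ is a representation of an even-dimensional $\GPin$ group, and because the discrepancy forces $\omega(-1)=-1$ for that representation, Lemma \ref{lemma:twist_by_sign_even_GPin} gives $\pi\simeq\sign\otimes\pi$ or $\tau\simeq\sign\otimes\tau$, absorbing the sign and again producing $\dim_{\CC}\Hom_H(\pi,\tau)^2\leq 1$. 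The main obstacle is the bookkeeping across the four parity subcases: one must verify that the sign discrepancy appears in exactly the subcase where Lemma \ref{lemma:twist_by_sign_even_GPin} is applicable to the correct group, which requires carefully comparing the formulas of Theorem \ref{thm:contragredient_GPin} on $V$ and on $W$ (with $k$ replaced by $k_W$) and tracking the effect of the central-character condition.
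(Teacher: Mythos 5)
Your proposal is correct and follows essentially the same route as the paper: Lemma \ref{lemma:vanishing_involution_implies_at_most_one_first_lemma} applied with $\sigma=\tau_W$, then Theorem \ref{thm:contragredient_GPin} together with the central-character matching on $Z^\circ$ and Lemma \ref{lemma:twist_by_sign_even_GPin} to remove the residual $\sign$ twist, which are exactly the ingredients of the paper's proof (the paper instead proves $\Hom_H(\pi,\tau^\vee)=\Hom_H(\pi^\vee,\tau)$, squares, and then re-chooses $\tau$ to be $\tau^\vee$). One small correction: your displayed inequality should be obtained by applying the lemma to the pair $(\pi,\tau^\vee)$ and using $\tau^{\vee\vee}\simeq\tau$, not by literally replacing $\tau$ with $\chi_\tau^{-1}\otimes\tau$, since under that substitution the twisting character of the new representation is no longer $\chi_\tau$.
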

\begin{proof}
Set
\[
(G, H)=(\GPin(V), \GPin(W))
\]
and $\sigma$ to be as in \eqref{eq:tau_e_on_GPin}, so that $\sigma(H)=H$. Also the vanishing theorem (Theorem \ref{thm:main_theorem_distribution}) says that every $H$-invariant distribution on $G$ is invariant under $\sigma$. Hence all the conditions of the above lemma are satisfied, which implies
\[
\dim_{\CC}\Hom_H(\pi,\, \tau^\vee)\cdot \dim_{\CC}\Hom_H(\pi^\vee,\, \tau)\leq 1
\]
for all $\pi\in\Irr(G)$ and $\tau\in\Irr(H)$. (Note that at this point we cannot conclude $\dim_{\CC}\Hom_H(\pi,\, \tau^\vee)\leq 1$ and $\dim_{\CC}\Hom_H(\pi^\vee,\, \tau)\leq 1$ because, if one of the Hom spaces is zero, then the other could still have dimension $>1$.)

To derive Theorem \ref{thm:A} for $\GPin$, it suffices to show
\begin{equation}\label{eq:hom_space_identity}
\Hom_{H}(\pi,\, \tau^\vee)= \Hom_{H}(\pi^\vee,\, \tau),
\end{equation}
because then the above inequality becomes
\[
\left(\dim_{\CC}\Hom_{H}(\pi,\, \tau^\vee)\right)^2\leq 1,
\]
and since $\tau$ is arbitrary we can re-choose our $\tau$ to be $\tau^\vee$.

To show \eqref{eq:hom_space_identity}, we use our description of the contragredients from Theorem \ref{thm:contragredient_GPin}. First, if the central characters $\omega_{\pi}$ and $\omega_{\tau^\vee}=\omega_{\tau}^{-1}$ do not agree on $Z^\circ$ then both $\Hom_{H}(\pi,\, \tau^\vee)$ and $\Hom_{H}(\pi^\vee,\, \tau)$ are zero, and hence \eqref{eq:hom_space_identity} trivially holds. So assume $\omega_\pi$ and $\omega_{\tau}^{-1}$ agree on $Z^\circ$, namely
\[
\omega_{\pi}|_{Z^\circ}=\omega_{\tau}^{-1}|_{Z^{\circ}}.
\]
Assume $\dim_FV=2k$ and $\dim_FW=2k-1$. We know from Theorem \ref{thm:contragredient_GPin} that
\[
\pi^\vee=\omega_{\pi}^{-1}\otimes\pi\qand \tau^\vee=\sign_{\tau}^{k}\omega_{\tau}^{-1}\otimes\tau,
\]
where $\sign_\tau$ is the sign character of $\tau$ as defined in \eqref{eq:sign_of_pi}. Also by Lemma \ref{lemma:twist_by_sign_even_GPin}, we have
\[
\omega_{\pi}^{-1}\otimes\pi=\sign_{\pi}^{k}\omega_{\pi}^{-1}\otimes \pi.
\]
But since $\omega_{\pi}|_{Z^\circ}=\omega_{\tau}^{-1}|_{Z^{\circ}}$ and so $\omega_\pi(-1)=\omega_{\tau}(-1)$, we have
\[
\pi^\vee=\sign_{\tau}^{k}\omega_{\pi}^{-1}\otimes \pi.
\]
We then have
\begin{align*}
\Hom_{H}(\pi,\, \tau^\vee)
&=\Hom_H(\pi,\, \sign_\tau^{k}\omega_{\tau}^{-1}\otimes\tau)\\
&=\Hom_H(\sign_\tau^{k}\omega_{\tau}\otimes\pi,\, \tau)\\
&=\Hom_H(\sign_\pi^{k}\omega_{\pi}^{-1}\otimes\pi,\, \tau)\\
&=\Hom_H(\pi^\vee,\, \tau),
\end{align*}
where for the third inequality we used $\omega_{\pi}|_{Z^\circ}=\omega_{\tau}^{-1}|_{Z^{\circ}}$. Similarly, we can obtain \eqref{eq:hom_space_identity} when $\dim_FV$ is odd.

\end{proof}

%

\section{Reduction to classical group situations}

In this section, we reduce Theorem \ref{thm:main_theorem_distribution} to the classical group situations of \cite{AGRS} and \cite{Wal12}. To be more precise, first we reduce Theorem \ref{thm:main_theorem_distribution} to the vanishing assertion
\begin{equation}\label{eq:chi_distribution_zero}
\Scal'(\GPin(V)\times V)^{\GPint(V),\chi}=0,
\end{equation}
which is the analogue of Proposition \ref{pro:AGRS_non-exitence_of_distribution_times_V}. Then we will show that this vanishing assertion can be reduced to the classical group situations.

\subsection{Elimination of $W$}
In this subsection, we reduce Theorem \ref{thm:main_theorem_distribution} to the above vanishing assertion \eqref{eq:chi_distribution_zero}; namely we eliminate the space $W$ from Theorem \ref{thm:main_theorem_distribution}.  The key ingredient is the following version of Frobenius descent.

\begin{lemma}[Frobenius descent]\label{lemma:Frobenius_descent}
Let $G$ be an lctd group which is unimodular. Let $X$ and $Y$ be lctd spaces on which $G$ acts. Further assume that the action of $G$ on $Y$ is transitive. Suppose we have a continuous $G$-equivariant map
\[
\phi:X \rightarrow Y,
\]
namely $\varphi(g\cdot x)=g\cdot\varphi(x)$ for all $g\in G$ and $x\in X$. Fix $y\in Y$. Assume the stabilizer $G_y\subseteq G$ of $y$ is unimodular. Let $\chi:G\to\CC^1$ be a character of $G$.  Then there is a canonical isomorphism
\[
\mathcal{S}'(X)^{G,\chi}\simeq \mathcal{S}'(\phi^{-1}(y))^{G_y,\chi}.
\]
\end{lemma}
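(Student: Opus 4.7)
The plan is to construct mutually inverse maps in each direction, following the standard paradigm of Frobenius descent for distributions. Set $Z := \phi^{-1}(y) \subseteq X$, which is a closed $G_y$-invariant subspace, since $\{y\}$ is closed in the Hausdorff space $Y$ and $\phi$ is continuous and $G$-equivariant.

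First I would construct the restriction map. Extension by zero along the closed embedding $Z \hookrightarrow X$ gives an inclusion $\Scal(Z) \hookrightarrow \Scal(X)$, which dualizes to a restriction $\mathrm{res}:\Scal'(X) \to \Scal'(Z)$. If $T \in \Scal'(X)^{G,\chi}$, then $\mathrm{res}(T) \in \Scal'(Z)^{G_y,\chi}$: the subspace $Z$ is stable under $G_y$, and $T$ already transforms under the ambient $G$-action by $\chi$, so the restriction transforms under $G_y \subseteq G$ by $\chi$.

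For the opposite direction, I would use that transitivity of $G$ on $Y$ gives $Y \simeq G/G_y$, and that the $G$-equivariant map $\phi$ exhibits $X$ as the homogeneous fiber bundle $G \times_{G_y} Z$ in the category of lctd spaces. Unimodularity of $G$ and $G_y$ guarantees the existence of a nonzero $G$-invariant measure $d\mu$ on $G/G_y$. Given $T_0 \in \Scal'(Z)^{G_y,\chi}$ and $f \in \Scal(X)$, I would choose a locally finite partition of unity on $Y$ subordinate to an open cover over which $\phi$ admits continuous local sections, decompose $f$ accordingly, and on each piece pair $T_0$ with the translated fiber data while integrating $\chi(g)^{-1}$ against $d\mu$. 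The resulting functional is a well-defined element of $\Scal'(X)^{G,\chi}$, and a direct fiberwise computation shows that the two maps are mutually inverse: restriction followed by extension recovers $T$ because $T$ is determined by its values over $\{y\}$ by $G$-$\chi$-equivariance, while extension followed by restriction is the identity by construction.

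The main obstacle is verifying well-definedness and continuity of the extension map, i.e.\ independence from the choice of partition of unity and local sections, together with the measure-theoretic rigor of the integration. This is exactly the content of Bernstein's localization/descent machinery for $\ell$-spaces, and has been carried out in detail in the lctd setting used by \cite{AGRS} and \cite{Wal12}, whose approach this paper follows. Rather than reproduce that technical core, I would invoke the standard statement; the $\chi$-twisted formulation is a trivial modification of the untwisted one, so no new ingredient beyond the unimodularity of $G$ and $G_y$ (needed to produce the $G$-invariant measure on $G/G_y$) is required.
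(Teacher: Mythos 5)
The paper does not actually prove this lemma; it is cited directly from \cite[Theorem 2.2]{AGRS}, Bernstein's notes, and \cite{BZ76}. Your proposal ultimately also defers to those references for the technical core, which is fine and is in that sense the ``same approach.'' However, the sketch you offer for the forward direction contains a genuine error that is worth flagging.

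You claim that ``extension by zero along the closed embedding $Z\hookrightarrow X$ gives an inclusion $\Scal(Z)\hookrightarrow\Scal(X)$, which dualizes to a restriction $\Scal'(X)\to\Scal'(Z)$.'' This is not correct for $\ell$-spaces (lctd spaces). Extension by zero of a locally constant compactly supported function preserves local constancy only when the subset is \emph{open}; for $Z$ closed the extension is typically not locally constant near $\partial Z$, so there is no natural map $\Scal(Z)\to\Scal(X)$. The correct short exact sequence for $Z$ closed and $U=X\smallsetminus Z$ open is
\[
0\longrightarrow\Scal(U)\longrightarrow\Scal(X)\longrightarrow\Scal(Z)\longrightarrow 0,
\]
where the first arrow is extension by zero from the \emph{open} set and the second is restriction of functions (surjective by local constancy). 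Dually one gets $0\to\Scal'(Z)\to\Scal'(X)\to\Scal'(U)\to 0$: distributions on $Z$ \emph{embed} into distributions on $X$ (those supported on $Z$), and distributions restrict only to \emph{open} subsets. There is no naive restriction of distributions to a closed subset, and a $G$-$\chi$-equivariant distribution on $X$ is generally not supported on $\phi^{-1}(y)$. The genuine content of Frobenius descent is precisely the construction of $T\mapsto T_0$ without such a restriction: given $\psi\in\Scal(\phi^{-1}(y))$, one chooses $f\in\Scal(X)$ whose $\chi^{-1}$-twisted average over $G/G_y$, restricted to the fiber, equals $\psi$ (this is where the invariant measure, hence the unimodularity hypotheses, enter), and sets $T_0(\psi):=T(f)$; well-definedness is then a nontrivial check using the $\chi$-equivariance of $T$. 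Your second paragraph (the extension direction via partitions of unity and local sections) is roughly the right shape, but the putative inverse you describe does not exist as stated, so ``mutually inverse'' cannot be verified from your sketch. Since you ultimately invoke the references just as the paper does, this does not sink the proposal, but the heuristic you offer for the hard direction is wrong and should be replaced by the measure-theoretic descent construction just described.
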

\begin{proof} See \cite[Theorem 2.2]{AGRS}, \cite[Section 1.5]{Ber} or \cite[Sections 2.21-2.36]{BZ76}.
\end{proof}

We can then prove the following.
\begin{prop}\label{prop:removing_W_from_vanishing_assertion}
We have a natural inclusion
\[
\Scal'(\GPin(V))^{\GPint(W), \chi}\subseteq \Scal'(\GPin(V)\times V)^{\GPint(V), \chi}.
\]
Hence, if $\Scal'(\GPin(V)\times V)^{\GPint(V), \chi}=0$ then $\Scal'(\GPin(V))^{\GPint(W), \chi}=0$.
\end{prop}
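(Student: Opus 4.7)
The strategy is a direct application of Frobenius descent (Lemma \ref{lemma:Frobenius_descent}) to the second projection $p_2 : \GPin(V)\times V \to V$, combined with extension by zero along a closed $\GPint(V)$-invariant subset of $\GPin(V)\times V$.

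First I would take $Y := \{v\in V \st q(v)=q(e)\}$. By Witt's extension theorem, $\OO(V)$ acts transitively on $Y$, and since $\beta$ acts on $V$ as multiplication by $-1$ and $q(-e)=q(e)$, the set $Y$ is a single $\GPint(V)$-orbit containing $e$. Crucially, $Y$ is closed in $V$, being the zero-locus of the polynomial $q(v)-q(e)$. By construction in Section 5, the stabilizer of $e$ in $\GPint(V)$ is $\GPint(V)_e = \GPint(W)$, and a direct check using the action \eqref{eq:action_on_V_GPin} shows that the induced action of $\GPint(W)$ on the fiber $p_2^{-1}(e)=\GPin(V)\times\{e\}\simeq \GPin(V)$ coincides with the one in Theorem \ref{thm:main_theorem_distribution}: elements of $\GPin(W)$ act by conjugation, while $e\beta$ acts by $h\mapsto e\sigma_V(h)e^{-1}=\tau_W(h)$ (using $P(e)e=-e$, so the sign introduced by $\beta$ is cancelled). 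Moreover, $\chi$ restricted to $\GPint(W)$ is nontrivial exactly on the coset $\GPin(W)\cdot e\beta$, matching the $\chi$-isotypic condition on the left side of the proposed inclusion.

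Next I would apply Frobenius descent to the $\GPint(V)$-equivariant map $p_2 : \GPin(V)\times Y \to Y$. All the relevant groups are reductive hence unimodular, so the lemma yields a canonical isomorphism
\[
\Scal'(\GPin(V)\times Y)^{\GPint(V),\,\chi} \;\simeq\; \Scal'(\GPin(V))^{\GPint(W),\,\chi}.
\]
Since $Y$ is closed in $V$, the subset $\GPin(V)\times Y$ is closed in $\GPin(V)\times V$. Extension by zero of Schwartz functions supported on the open complement gives a short exact sequence of Schwartz spaces whose dual provides a $\GPint(V)$-equivariant injection
\[
\Scal'(\GPin(V)\times Y) \;\hookrightarrow\; \Scal'(\GPin(V)\times V).
\]
Passing to $(\GPint(V),\chi)$-isotypic components and composing with the Frobenius descent isomorphism yields the desired inclusion, from which the final vanishing assertion is immediate. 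No serious obstacle is expected; the only nontrivial verifications are the routine identification of the stabilizer with $\GPint(W)$ and of the induced fiber action with $\tau_W$, both of which reduce to the sign computation $P(e)\cdot e=-e$.
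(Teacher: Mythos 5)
Your proof is correct and takes essentially the same route as the paper: the paper's set $X$ is exactly your $\GPin(V)\times Y$, and the paper likewise applies Frobenius descent to the projection onto $Y$ and then uses closedness of $X$ in $\GPin(V)\times V$ to get the inclusion of distribution spaces. You have merely spelled out the routine verifications (that $\GPint(V)_e=\GPint(W)$ and that the induced fiber action of $e\beta$ is $\tau_W$ via the sign cancellation $P(e)e=-e$) that the paper leaves implicit.
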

\begin{proof}
Recall that we have the orthogonal decomposition $V=W\oplus Fe$, where $e$ is anisotropic. Let
\[
X:=\{(g,v) \in \GPin(V)\times V \st\langle v,v\rangle=\langle e,e \rangle \}.
\]
Because of the way we have defined the action of $\GPint(V)$ on $V$ \eqref{eq:action_on_V_GPin}, one can readily see that $X$ is invariant under $\GPint(V)$. Hence we have
\[
\Scal'(X)^{\GPint(V), \chi}\subseteq\Scal'(\GPin(V)\times V)^{\GPint(V), \chi}
\]
because $X$ is closed in $\GPin(V)\times V$.

Next, let
\[
Y:=\{v\in V\st \langle v,v\rangle=\langle e,e \rangle\},
\]
which can be readily seen to be invariant under the action of $\GPint(V)$. By Witt's theorem, we know $\OO(V)$ acts transitively on $Y$ and hence $\GPint(V)$ acts transitively on $Y$.

Now, consider the projection
\[
\phi:X\longrightarrow Y,\quad (g, v)\mapsto v,
\]
which is $\GPint(V)$-equivariant. Recall that the stabilizer $\GPint(V)_e=\GPint(W)$ of $e$ is unimodular. Hence by the Frobenius descent (Lemma \ref{lemma:Frobenius_descent}) applied to this $\phi$, we obtain the canonical isomorphism
\begin{equation}\label{eq:canonical_iso_Forb1}
\Scal'(X)^{\GPint(V), \chi} \simeq \Scal'(\phi^{-1}(e))^{\GPint(V)_e,\chi}.
\end{equation}
By the obvious identification $\GPin(V)\times\{e\}\simeq\GPin(V)$ of sets, we have
\[
\Scal'(\phi^{-1}(e))^{\GPint(V)_e,\chi}\simeq\Scal'(\GPin(V))^{\GPint(W),\chi}.
\]
Hence we have
\[
\Scal'(\GPin(V))^{\GPint(W),\chi}\simeq \Scal'(X)^{\GPint(V), \chi}\subseteq \Scal'(\GPin(V)\times V)^{\GPint(V), \chi}.
\]
The lemma is proven.
\end{proof}

\subsection{Reduction to semisimple orbit}
By the above proposition, the proof of our main theorem is now reduced to proving the vanishing assertion \eqref{eq:chi_distribution_zero}. We will show this by reducing to the classical group situations of \cite{AGRS} and \cite{Wal12}.


The basic idea is to show that any distribution in $\mathcal{S}'({\GPin(V)}\times V)^{\widetilde{\GPin(V)},\chi}$ is supported in a smaller set by using Harish-Chandra's descent, for which we also need the following lemma due to Bernstein, which is called Bernstein's localization principle in \cite{AGRS}.
\begin{lemma}[Bernstein's localization principle]
Let $G$ be an lctd group. Let $\phi:X\to Y$ be a continuous map between lctd spaces $X$ and $Y$. Assume $G$ acts on $X$ by preserving the fibers of $\phi$. Let $\chi:G\to\CC^1$ be a character of $G$.  Suppose for all $y\in Y$ we have
\[
\mathcal{S}'(\phi^{-1}(y))^{G,\chi}=0.
\]
Then
\[
\Scal'(X)^{G, \chi}=0.
\]
\end{lemma}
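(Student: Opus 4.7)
The plan is to localize any $T\in\Scal'(X)^{G,\chi}$ over the base $Y$ and then invoke the fiberwise vanishing hypothesis. The key structural observation is that, because $G$ preserves the fibers of $\phi$, the pullback $\phi^{\ast}$ makes $\Scal(X)=C^{\infty}_c(X)$ into a module over the algebra $\mathcal{A}$ of locally constant complex-valued functions on $Y$, via $h\cdot f:=(h\circ\phi)f$, and this action commutes with the $G$-action (which is trivial on $\mathcal{A}$). Consequently $\Scal'(X)^{G,\chi}$ acquires the structure of an $\mathcal{A}$-module, and the goal becomes showing this module is zero.

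First I would reduce to a purely local statement on $Y$ by a standard partition-of-unity argument. For any $f\in\Scal(X)$ the image $\phi(\operatorname{supp} f)$ is compact in $Y$, hence can be partitioned into finitely many disjoint compact open subsets $U_1,\dots,U_r\subseteq Y$, giving $f=\sum_i (\mathbf{1}_{U_i}\circ\phi)\cdot f$. It is therefore enough to prove $T(f)=0$ whenever $\phi(\operatorname{supp} f)\subseteq U$ for an arbitrarily small compact open neighborhood $U$ of a chosen point $y\in Y$.

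The main obstacle is promoting the "arbitrarily small" local behavior at $y$ to a genuine $(G,\chi)$-equivariant distribution on the single fiber $\phi^{-1}(y)$, so that the hypothesis applies. The Bernstein-style input is the exact sequence
\[
\mathfrak{m}_y\cdot\Scal(X)\longrightarrow \Scal(X)\xrightarrow{\,\operatorname{res}_y\,}\Scal(\phi^{-1}(y))\longrightarrow 0,
\]
where $\mathfrak{m}_y\subseteq\mathcal{A}$ is the ideal of functions vanishing on some compact open neighborhood of $y$; surjectivity of $\operatorname{res}_y$ uses the lctd nature of $X$ to extend a locally constant compactly supported function on the fiber outward, and exactness in the middle is a direct topological verification in the totally disconnected setting. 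Dualizing yields an injection
\[
\Scal'(\phi^{-1}(y))^{G,\chi}\hookrightarrow \Scal'(X)^{G,\chi}/\mathfrak{m}_y\,\Scal'(X)^{G,\chi},
\]
so the fiberwise vanishing hypothesis forces $T$ to lie in $\bigcap_{y\in Y}\mathfrak{m}_y\cdot\Scal'(X)^{G,\chi}$. Combined with the partition-of-unity reduction, this yields $T(f)=0$ for every $f\in\Scal(X)$, hence $T=0$. In practice this is exactly the argument of \cite{Ber}, which may be cited directly; the entire proof strategy is also reproduced in the setup of \cite{AGRS}.
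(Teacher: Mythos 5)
The paper does not prove this lemma; it simply cites \cite[Corollary 2.1]{AGRS} and \cite[Section 1.4]{Ber}. Your attempt to supply an actual argument is a reasonable ambition, but the central step contains a genuine gap, and I think you have the implication running in the wrong direction.

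Your setup is fine: $\Scal'(X)^{G,\chi}$ is a module $M$ over the algebra $\mathcal{A}$ of locally constant functions on $Y$, and the sequence $\mathfrak m_y\Scal(X)\to\Scal(X)\to\Scal(\phi^{-1}(y))\to 0$ is exact, so dualizing identifies $\Scal'(\phi^{-1}(y))^{G,\chi}$ with $M[\mathfrak m_y]:=\{T\in M\,:\,\mathfrak m_y T=0\}$. It is also true (and can be checked directly) that $T\in\mathfrak m_y M$ if and only if $\mathbf 1_U T=0$ for some compact open $U\ni y$, from which $M[\mathfrak m_y]\cap\mathfrak m_y M=0$, so your claimed injection $M[\mathfrak m_y]\hookrightarrow M/\mathfrak m_y M$ is correct. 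And your final step is also fine: if $T\in\bigcap_y\mathfrak m_y M$ then for every $y$ there is a compact open $U_y\ni y$ with $\mathbf 1_{U_y}T=0$, and the partition-of-unity argument then gives $T(f)=0$ for every $f$.

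The problem is the inference in between. The hypothesis says $M[\mathfrak m_y]=0$. Your injection therefore reads $0\hookrightarrow M/\mathfrak m_y M$, which is vacuous and gives no control whatsoever over $M/\mathfrak m_y M$; in particular it does \emph{not} force $T\in\mathfrak m_y M$. What you would need is a map in the opposite direction, something like a surjection $M[\mathfrak m_y]\twoheadrightarrow M/\mathfrak m_y M$, and no such map exists in general. A concrete illustration: take $X=Y=\Z_p$ with $\phi=\id$ and trivial $G$. Then $M=\Scal'(\Z_p)$ is the space of finitely additive $\C$-valued measures on $\Z_p$; $M[\mathfrak m_0]$ is one-dimensional (measures supported at $0$), whereas $M/\mathfrak m_0 M$ is the space of germs of measures at $0$, which is enormous. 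So the two sides of your injection are genuinely very different objects, and vanishing of the small one tells you nothing about the large one. The substantive content of Bernstein's localization principle is precisely to bridge this gap — to show that if $T\neq 0$ and $y$ lies in the ``$\phi$-support'' of $T$ (meaning $\mathbf 1_U T\neq 0$ for every compact open $U\ni y$, a point which always exists by the partition-of-unity compactness argument), then one can in fact manufacture a \emph{nonzero} element of $\Scal'(\phi^{-1}(y))^{G,\chi}$. Producing that element is not a formal dualization; the restrictions $\mathbf 1_U T$ do not converge to a distribution on the fiber, and the argument in \cite{Ber} and \cite{BZ76} requires a separate idea. In a write-up it would be safer to follow the paper and cite the result rather than attempt this reconstruction.
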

\begin{proof}
See \cite[Corollary 2.1]{AGRS}, which is taken from \cite[Section 1.4]{Ber}.
\end{proof}

Bernstein's localization principle essentially says that if there is any continuous $\phi:X\to Y$ such that each fiber is preserved under the action of $G$ then the vanishing of the space of distributions $\Scal'(X)^{G,\chi}$ can be shown ``fiber-by-fiber".

By using this twice, we can prove the following.
\begin{prop}\label{prop:reduction_to_semisimple_orbit}
Let $\OO(V)_{s}$ be the set of semisimple elements in $\OO(V)$. Define a map
\[
\theta:\GPin(V)\times V\longrightarrow \OO(V)_{s}
\]
by
\[
(g, v)\mapsto P(g)_s,
\]
where $P(g)_s$ is the semisimple part of $P(g)$ under the Jordan decomposition.

If
\[
\Scal'(\theta^{-1}(\gamma))^{\GPint(V), \chi}=0
\]
for all semisimple conjugacy class $\gamma\subseteq\OO(V)_{s}$, then
\[
\Scal'(\GPin(V)\times V)^{\GPint(V),\chi}=0.
\]
\end{prop}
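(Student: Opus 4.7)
The proof plan is to invoke Bernstein's localization principle, but applied in two stages: direct application to $\theta$ fails because its nominal target $\OO(V)_s/\!\sim$ is not an lctd space (and the Jordan decomposition is not globally continuous), so one first factors through the characteristic polynomial before treating the residual finite stratification inside each char-poly fiber.

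First I would verify the invariance needed to apply Bernstein at all. The characteristic polynomial map
\[
\psi:\GPin(V)\times V\longrightarrow F^n,\qquad (g,v)\longmapsto (a_0,\ldots,a_{n-1}),
\]
where $\lambda^n+a_{n-1}\lambda^{n-1}+\cdots+a_0$ is the characteristic polynomial of $P(g)$, is continuous with lctd target. It is $\GPint(V)$-invariant: conjugation by $g\in\GPin(V)$ fixes the characteristic polynomial of $P(h)$, while $\beta\cdot(h,v)=(\sigma_V(h),-v)$ gives $P(\sigma_V(h))=P(h)^{-1}$, which is $\OO(V)$-conjugate to $P(h)$ by Lemma \ref{lemma:MVW_orthogonal_group} and hence shares its characteristic polynomial. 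Bernstein's principle then reduces the proposition to showing
\[
\Scal'(\psi^{-1}(p))^{\GPint(V),\chi}=0\qquad\text{for every }p\in F^n.
\]

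Next, fix such $p$ and set $X_p:=\psi^{-1}(p)$. For every $(g,v)\in X_p$, the semisimple part $P(g)_s$ is a semisimple orthogonal element whose characteristic polynomial is again $p$, so $P(g)_s$ lies in one of the semisimple $\OO(V)$-conjugacy classes $\gamma_1,\dots,\gamma_r$ with characteristic polynomial $p$; over our $p$-adic field this list is finite, since semisimple $\OO(V)$-classes within a fixed char-poly fiber are parametrized by the isomorphism classes of Hermitian/quadratic form structures on the eigenspaces, a finite set. This yields a set-theoretic partition
\[
X_p\;=\;\bigsqcup_{i=1}^r \theta^{-1}(\gamma_i),
\]
and each $\gamma_i$ is closed in $\OO(V)$ (semisimple orbits in a reductive $p$-adic group are closed). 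A standard induction on the closed stratification—equivalently, a second application of Bernstein's principle to a locally constant indicator that distinguishes the strata via continuous invariants (such as Hasse invariants of the isotypic Hermitian components)—then derives the desired vanishing on $X_p$ from the hypothesized vanishing of each $\Scal'(\theta^{-1}(\gamma_i))^{\GPint(V),\chi}$.

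The main obstacle is the second step: because $h\mapsto h_s$ is not globally continuous, one has to check that inside a single characteristic polynomial fiber $X_p$ the pieces $\theta^{-1}(\gamma_i)$ really are locally closed, and that they can be peeled off in a Bernstein-compatible way. The idea is that once $\psi(g,v)=p$ is fixed, the $\bar F$-conjugacy class of $P(g)_s$ is determined, and the residual $F$-form data refining it is captured by continuous invariants, so the stratification by $\gamma_i$ is cut out by continuous equalities; this is the technical heart of the reduction to the classical-group case handled in Sections 8 and 9.
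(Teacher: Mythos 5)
Your proposal follows the same two-stage Bernstein localization that the paper uses: first factor through the characteristic polynomial of $P(g)$ (whose $\GPint(V)$-invariance you correctly reduce to Lemma~\ref{lemma:MVW_orthogonal_group}), then treat the residual stratification of each char-poly fiber by semisimple conjugacy class. The first stage is correct and matches the paper exactly. The second stage, however, is underspecified: you hedge between ``a standard induction on the closed stratification'' and ``a second application of Bernstein's principle to a locally constant indicator,'' and you flag, without resolving, the worry that the strata $\theta^{-1}(\gamma_i)$ may not be locally closed in a Bernstein-compatible way.

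The paper's argument here is cleaner and resolves your concern without any induction or auxiliary invariants (Hasse invariants, etc.). The two facts to combine are: (i) $P(\Fcal_f)$ meets only finitely many semisimple $\OO(V)$-conjugacy classes (the paper cites \cite[Chap.~IV]{SS70}; your form-theoretic heuristic for finiteness is sound in spirit but a citation is what is actually needed), and (ii) every semisimple conjugacy class is closed in $\OO(V)$. Finitely many disjoint closed sets covering $P(\Fcal_f)_s$ are each also \emph{open} in the subspace $P(\Fcal_f)_s$, so the quotient $P(\Fcal_f)_s/\!\sim$ is a discrete, hence lctd, space. The induced map $\theta:\Fcal_f\times V\to P(\Fcal_f)_s/\!\sim$, $(g,v)\mapsto[\,P(g)_s\,]$, is continuous (this is the content of \cite[proof of Lemma~5.1]{AGRS}, which the paper cites rather than re-derives; your observation that the global discontinuity of $h\mapsto h_s$ disappears once the char-poly is fixed and one passes to the discrete quotient is exactly why). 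Its fibers are $\GPint(V)$-invariant since $\sigma_V$ preserves semisimple conjugacy classes. A single further application of Bernstein's localization principle to $\theta$ then gives the proposition, with no need to verify local closedness by hand or to construct explicit indicator functions. One minor further confusion in your last sentence: this stratification is not ``the technical heart of the reduction to the classical-group case'' in Sections 8--9; that reduction is effected separately by Frobenius descent and the centralizer analysis of Lemma~\ref{lemma:centralizer_GPin}.
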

\begin{proof}
Let $Y$ be the space of polynomials of degree at most $n=\dim V$, which is a lctd space. Consider the map
\[
\phi:\GPin(V)\times V\longrightarrow\GPin(V)\xrightarrow{\quad P\quad}\OO(V)\longrightarrow Y,
\]
where the first map is the projection on $\GPin(V)$, the second one is the canonical projection, and the third one sends each element in $\OO(V)$ to its characteristic polynomial, so that $\phi$ sends each $(g,v)\in\GPin(V)\times V$ to the characteristic polynomial of $P(g)$. One can see that $\phi$ is continuous.

Let $f\in Y$ be a polynomial. We have
\[
\phi^{-1}(f)=\Fcal_f\times V,
\]
where
\[
\Fcal_f=\{g\in\GPin(V)\st \text{the char.\ poly.\ of $P(g)$ is $f$}\}.
\]
Since $P(\sigma_V(g))=P(g)^{-1}$ and $P(g)^{-1}$ is conjugate to $P(g)$ in $\OO(V)$ by Lemma \ref{lemma:MVW_orthogonal_group}, $P(\sigma_V(g))$ and $P(g)$ have the same characteristic polynomial. Thus the fiber $\phi^{-1}(f)$ is preserved by $\GPint(V)$. Hence by Bernstein's localization principle, if
\[
\Scal'(\Fcal_f\times V)^{\Gt,\chi}=0
\]
for all $f\in Y$, then $\Scal'(\GPin(V)\times V)^{\GPint(V),\chi}=0$.

Next let $P(\Fcal_f)_{s}$ be the subset of $P(\Fcal_f)$ consisting of semisimple elements, and denote by $P(\Fcal_f)_{s}\slash\!\sim$ the set of conjugacy classes in $P(\Fcal_f)_{s}$. It is well-known that each semisimple conjugacy class is closed in the group, and hence closed in $P(\Fcal_f)_s$, when $P(\Fcal_f)_s$ is given the subspace topology. Moreover, it follows from \cite[Chap.\ IV]{SS70} that $P(\Fcal_f)$ contains only a finite number of semisimple conjugacy classes. Hence in $P(\Fcal_f)_{s}$ each semisimple conjugacy class is open, which implies the quotient space $P(\Fcal_f)_{s}\slash\!\sim$ is discrete and in particular lctd.

Now, consider the map
\[
\theta:\Fcal_f\times V\longrightarrow P(\Fcal_f)\longrightarrow P(\Fcal_f)_{s}\longrightarrow P(\Fcal_f)_{s}\slash\!\sim,
\]
where the second map takes each $h\in P(\Fcal_f)\subseteq\OO(V)$ to its semisimple part $h_s$ for the Jordan decomposition $h=h_sh_u$, and the third map is the canonical surjection. This map $\theta$ is indeed continuous as in \cite[proof of Lemma 5.1, p.1426]{AGRS}. Now for each conjugacy class $\gamma\in P(\Fcal_f)_{s}\slash\!\sim$, the fiber $\theta^{-1}(\gamma)$ is invariant under $\GPint(V)$ because the involution $\sigma_V$ preserves the semisimple conjugacy classes. Hence by applying Bernstein's localization principle, if
\[
\Scal'(\theta^{-1}(\gamma))^{\GPint(V),\chi}=0
\]
for all semisimple conjugacy class $\gamma$ of $\OO(V)$, then
\[
\Scal'(\GPin(V)\times V)^{\GPint(V),\chi}=0.
\]
Of course this $\theta^{-1}(\gamma)$ is the same as the one in the proposition.
\end{proof}

\subsection{Reduction to $\OO(V)$ situation}
The next step is to reduce the vanishing assertion
\[
\Scal'(\theta^{-1}(\gamma))^{\GPint(V), \chi}=0
\]
to even a smaller support than $\theta^{-1}(\gamma)$ again by applying the Frobenius descent and Bernstein's localization principle, and then reduce to the classical group situations.

Let us first set up a new notation. We let
\[
\Ucal\subseteq\GPin(V)
\]
be the set of unipotent elements in $\GPin(V)$. For each $g\in\GPin(V)$ we let
\[
\Ucal_g=\{u\in\Ucal\st gu=ug\},
\]
namely the centralizer of $g$ in $\Ucal$. Note that both $\Ucal$ and $\Ucal_g$ are closed in $\GPin(V)$. Also the restriction to $\Ucal$ of the canonical projection $P:\GPin(V)\to\OO(V)$ is one-to-one, which allows us to identify the set of unipotent elements in $\OO(V)$ with $\Ucal$.

Note that for each $g\in\GPin(V)$ the stabilizer $\GPint(V)_g$ of $g$ in $\GPint(V)$ is the group generated by $\GPin(V)_g$ and $\eta\beta$, where $\eta\in\GPin(V)$ is such that $\eta\sigma_V(g)\eta^{-1}=g$. In particular, if $g$ is semisimple, so that $\GPin(V)_g$ is reductive, then $\GPint(V)_g$ is unimodular, having the reductive $\GPin(V)_g$ as an index 2 subgroup.

\begin{lemma}
Assume
\[
\Scal'(Z^\circ g\,\Ucal_g\times V)^{\GPint(V)_{g}, \chi}=0
\]
for all semisimple $g\in\GPin(V)$. Then
\[
\Scal'(\GPin(V)\times V)^{\GPint(V),\chi}=0.
\]
\end{lemma}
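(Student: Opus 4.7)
By Proposition \ref{prop:reduction_to_semisimple_orbit}, it suffices to prove $\Scal'(\theta^{-1}(\gamma))^{\GPint(V),\chi}=0$ for each semisimple conjugacy class $\gamma\subseteq\OO(V)_s$. The plan is to apply Frobenius descent (Lemma \ref{lemma:Frobenius_descent}) to the continuous $\GPint(V)$-equivariant map
\[
\theta':\theta^{-1}(\gamma)\longrightarrow\gamma,\qquad (g,v)\mapsto P(g)_s,
\]
transferring the question to a single fiber. Fix $h_0\in\gamma$ together with a lift $g_0\in\GPin(V)$ with $P(g_0)=h_0$; since $P$ preserves Jordan decompositions and $\ker P=Z^\circ\simeq\GL_1$ contains no non-identity unipotents, this $g_0$ is automatically semisimple.

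First, $\gamma$ is a single $\GPint(V)$-orbit: the $\GPin(V)$-part of the action descends through $P$ to the $\OO(V)$-conjugation on $\gamma$ (transitive by definition), while $\beta$ acts by inversion because $P\circ\sigma_V=(\,\cdot\,)^{-1}\circ P$. Next, the fiber $\theta'^{-1}(h_0)$ is computed by Jordan decomposition: any $g$ with $P(g)_s=h_0$ has $P(g_s)=h_0=P(g_0)$, so $g_s=zg_0$ for some $z\in Z^\circ$; and $g_u\in\Ucal$ commutes with $g_s=zg_0$, hence with $g_0$ since $z$ is central, so $g_u\in\Ucal_{g_0}$. This gives
\[
\theta'^{-1}(h_0)\;=\;Z^\circ g_0\,\Ucal_{g_0}\times V.
\]
Finally, $\GPint(V)_{g_0}\subseteq\GPint(V)_{h_0}$: from $ag_0a^{-1}=g_0$ one obtains $P(a)h_0P(a)^{-1}=h_0$, while from $\eta\sigma_V(g_0)\eta^{-1}=g_0$ one obtains $P(\eta)h_0^{-1}P(\eta)^{-1}=h_0$, which is exactly the condition for $\eta\beta$ to fix $h_0$ under the action above.

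The stabilizer $\GPint(V)_{h_0}$ is unimodular---it is a $\Z/2$-extension of $P^{-1}(\OO(V)_{h_0})$, itself a $\GL_1$-extension of the reductive group $\OO(V)_{h_0}$ described in Proposition \ref{prop:centralizer_of_h}---and likewise $\GPint(V)$ is unimodular, so Lemma \ref{lemma:Frobenius_descent} yields
\[
\Scal'(\theta^{-1}(\gamma))^{\GPint(V),\chi}\;\simeq\;\Scal'(Z^\circ g_0\,\Ucal_{g_0}\times V)^{\GPint(V)_{h_0},\chi}.
\]
Because $\GPint(V)_{g_0}\subseteq\GPint(V)_{h_0}$, every distribution invariant under the larger group is a fortiori invariant under the smaller one, so the right-hand side embeds into $\Scal'(Z^\circ g_0\,\Ucal_{g_0}\times V)^{\GPint(V)_{g_0},\chi}$, which vanishes by hypothesis.

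The main technical obstacle is verifying the continuity of $\theta'$, so that Frobenius descent genuinely applies. Equivariance and surjectivity are immediate; continuity comes from the observation that $\theta^{-1}(\gamma)$ lies over a single fiber of the characteristic-polynomial map $\OO(V)\to Y$, on which the Jordan decomposition is continuous because the finitely many semisimple conjugacy classes are both closed and open in that fiber---this is the same continuity argument invoked in the proof of Proposition \ref{prop:reduction_to_semisimple_orbit}, going back to \cite[proof of Lemma 5.1]{AGRS}.
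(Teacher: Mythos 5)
Your proof follows the same strategy as the paper's: Frobenius descent applied to the map $\theta$ restricted to the closed subset $\theta^{-1}(\gamma)$, combined with Proposition \ref{prop:reduction_to_semisimple_orbit}. However, you are more careful than the paper's own (very terse) argument on one genuine point: the Frobenius descent produces the stabilizer $\GPint(V)_{h_0}$ of the \emph{base point} $h_0=P(g_0)\in\gamma$ under the $\GPint(V)$-action on $\gamma$, not the stabilizer $\GPint(V)_{g_0}$ of $g_0$ itself, and these differ in general (for instance, any $a$ with $ag_0a^{-1}=-g_0$ lies in the former but not the latter; cf.\ Lemma \ref{lemma:centralizer_GPin}). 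You correctly resolve this by observing the inclusion $\GPint(V)_{g_0}\subseteq\GPint(V)_{h_0}$, so that $\chi$-equivariance under the larger group implies it under the smaller one, and the hypothesis then applies. The paper's proof simply writes the isomorphism with $\GPint(V)_g$; your version makes explicit why that is nevertheless enough. Your verifications of continuity of $\theta$, transitivity of the action on $\gamma$, unimodularity of the stabilizer, and the computation $\theta^{-1}(h_0)=Z^\circ g_0\,\Ucal_{g_0}\times V$ via Jordan decomposition are all correct and match what the paper leaves implicit.
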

\begin{proof}
Let $\gamma\subseteq\OO(V)_s$ be the semisimple conjugacy class of $P(g)$. The lemma is proven by applying the Frobenius descent to the map
\[
\theta:\theta^{-1}(\gamma)\longrightarrow \gamma, \quad (g, v)\mapsto P(g)_s,
\]
where $(g, v)\in\theta^{-1}(\gamma)\subseteq\GPin(V)\times V$. To be precise, for each semisimple $g\in\GPin(V)$ we have
\[
\theta^{-1}(P(g))=Z^\circ g\,\Ucal_g\times V,
\]
and hence by the Frobenius descent
\[
\Scal'(\theta^{-1}(\gamma))^{\GPint(V), \chi}\simeq\Scal'(Z^\circ g\,\Ucal_g\times V)^{\GPint(V)_{g}, \chi}.
\]
\end{proof}

Let us eliminate $Z^\circ$ from the above lemma.
\begin{lemma}
If $\Scal'(zg\,\Ucal_g\times V)^{\GPint(V)_{g}, \chi}=0$ for all $z\in Z^\circ$, then $\Scal'(Z^\circ g\,\Ucal_g\times V)^{\GPint(V)_{g}, \chi}=0$.
\end{lemma}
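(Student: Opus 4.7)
The plan is a routine application of Bernstein's localization principle to the projection onto the $Z^\circ$-coordinate. Specifically, I would consider the map
\[
\phi : Z^\circ g\,\Ucal_g\times V\longrightarrow Z^\circ,\qquad (zgu,\, v)\longmapsto z,
\]
and show that $\phi$ is well-defined and continuous, that each fiber is $\GPint(V)_g$-invariant, and that the fiber $\phi^{-1}(z)$ is $zg\,\Ucal_g\times V$. Then the lemma follows by applying Bernstein's localization principle to $\phi$.

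Well-definedness of $\phi$ is immediate: if $z_1gu_1=z_2gu_2$, then $z_1z_2^{-1}=u_2u_1^{-1}$ is both central (hence semisimple) and unipotent, forcing $z_1=z_2$. Continuity and the identification $\phi^{-1}(z)=zg\,\Ucal_g\times V$ follow from the fact that $(z,u)\mapsto zgu$ gives a bijection of varieties $Z^\circ\times\Ucal_g\simeq Z^\circ g\,\Ucal_g$ (by the uniqueness of Jordan-like decompositions above), so the projection $\phi$ is simply the composition with the projection $Z^\circ\times\Ucal_g\to Z^\circ$.

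The main point to verify is that each fiber is stable under $\GPint(V)_g\simeq\GPin(V)_g\rtimes\{1,\eta\beta\}$, where $\eta\in\GPin(V)$ satisfies $\eta\sigma_V(g)\eta^{-1}=g$. For $h\in\GPin(V)_g$ the action sends $(zgu,v)$ to $(zg\cdot huh^{-1},\, P(h)v)$, and $huh^{-1}\in\Ucal_g$ since $h$ centralizes $g$; hence the $Z^\circ$-coordinate is preserved. For $\eta\beta$, we use that $\sigma_V$ fixes $Z^\circ$ pointwise (Lemma~\ref{lemma:sigma_V_fixes_center}) and is an anti-involution, so
\[
\eta\sigma_V(zgu)\eta^{-1}=z\,\eta\sigma_V(u)\sigma_V(g)\eta^{-1}=z\,\bigl(\eta\sigma_V(u)\eta^{-1}\bigr)g.
\]
Since $\sigma_V$ turns the commuting pair $(g,u)$ into the commuting pair $(\sigma_V(g),\sigma_V(u))$, and $\sigma_V$ preserves unipotence, the element $u':=\eta\sigma_V(u)\eta^{-1}$ lies in $\Ucal_g$; as $u'$ commutes with $g$, the above equals $zgu'$. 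The action of $\eta\beta$ on $V$ is $v\mapsto -P(\eta)v\in V$, so altogether the fiber $zg\,\Ucal_g\times V$ is preserved.

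Since each fiber is $\GPint(V)_g$-invariant and, by hypothesis, $\Scal'(zg\,\Ucal_g\times V)^{\GPint(V)_g,\chi}=0$ for every $z\in Z^\circ$, Bernstein's localization principle yields
\[
\Scal'(Z^\circ g\,\Ucal_g\times V)^{\GPint(V)_g,\chi}=0,
\]
as required. The only potentially delicate point is continuity of $\phi$, but this reduces to the elementary fact that the natural product map $Z^\circ\times\Ucal_g\to Z^\circ g\,\Ucal_g$ is a homeomorphism, which follows from the disjointness of the semisimple center $Z^\circ$ and the unipotent set $\Ucal_g$ used above.
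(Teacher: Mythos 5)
Your proposal is correct and follows essentially the same route as the paper: the paper's proof is precisely an application of Bernstein's localization principle to the projection $Z^\circ g\,\Ucal_g\times V\to Z^\circ$, $zg\,\Ucal_g\mapsto z$, noting that each fiber $zg\,\Ucal_g\times V$ is preserved by $\GPint(V)_g$. Your additional verifications (well-definedness via uniqueness of the central--unipotent factorization, and the fiberwise invariance under $\eta\beta$ using that $\sigma_V$ fixes $Z^\circ$ and is an anti-involution) are exactly the details the paper leaves implicit.
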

\begin{proof}
This can be proven by applying Bernstein's localization principle to the map
\[
Z^\circ g\,\Ucal_g\times V\longrightarrow Z^\circ,\quad zg\, \Ucal_g\mapsto z,
\]
because each fiber $zg\,\Ucal_g\times V$ is preserved by $\GPint(V)_g$.
\end{proof}

Hence our vanishing assertion reduces to
\[
\Scal'(g\,\Ucal_g\times V)^{\GPint(V)_{g}, \chi}=0
\]
for all semisimple $g\in\GPin(V)$ because in the above lemma $zg$ is semisimple for all $z\in Z^\circ$ and $\Ucal_{zg}=\Ucal_g$.

\quad

Note that we have the obvious projection
\[
P:\GPint(V)\longrightarrow\OOt(V),\quad g\mapsto P(g),\; \beta\mapsto \beta,
\]
whose kernel is $Z^\circ$, which acts trivially on the space $\Scal'(g\,\Ucal_g\times V)$. Here we use the same symbol $P$ as the canonical projection because it is actually an extension of the canonical projection to $\GPint(V)$, and this should never cause any confusion. Next, for each semisimple $g$ we have the bijection
\[
g\,\Ucal_g\longrightarrow P(g\,\Ucal_g)
\]
induced by the canonical projection $P$. This is indeed a bijection because $\GPin(V)$ and $\OO(V)$ share the same unipotent elements. Further, this map intertwines the actions of $\GPint(V)_g$ and $P(\GPint(V)_g)$, which implies
\[
\Scal'(g\,\Ucal_g\times V)^{\GPint(V)_{g}, \chi}\simeq\Scal'(P(g\,\Ucal_g)\times V)^{P(\GPint(V)_g), \chi},
\]
because the kernel of the projection $P$ acts trivially. Note that
\[
\Scal'(P(g\,\Ucal_g)\times V)^{P(\GPint(V)_g), \chi}\subseteq\Scal'(P(\GPin(V)_g)\times V)^{P(\GPint(V)_g), \chi}.
\]
Hence to show our main theorem, it suffices to show
\begin{equation}\label{eq:vanishing_for_orthogonal}
\Scal'(P(\GPin(V)_g)\times V)^{P(\GPint(V)_g), \chi}=0
\end{equation}
for all semisimple $g\in\GPin(V)$. This is essentially the $\OO(V)$ situation of \cite{AGRS}

\section{End of proof}
This vanishing assertion \eqref{eq:vanishing_for_orthogonal} is more or less proven in \cite{AGRS}. Unfortunately, however, we do not always have $P(\GPin(V)_g)=\OO(V)_{P(g)}$ as we have seen in Lemma \ref{lemma:centralizer_GPin}. (Indeed, if $P(\GPin(V)_g)=\OO(V)_{P(g)}$ then we would have only to show $\Scal'(\OO(V)_{P(g)}\times V)^{\OO(V)_{P(g)}, \chi}=0$, which is precisely the assertion proven in \cite{AGRS}.) Accordingly we need to modify \cite{AGRS}. The difference is that $P(\GPin(V)_g)$ might have a factor of $\SO$ as in Lemma \ref{lemma:centralizer_GPin}, for which we need the result of Waldspurger \cite{Wal12} for the $\SO$ case.

In this subsection, we set
\[
h:=P(g)\in\OO(V)\qand \OO'(V)_h:=P(\GPin(V)_g)
\]
to ease the notation. As in Proposition \ref{prop:centralizer_of_h} we have an orthogonal sum decomposition
\[
V=V_1\oplus\cdots\oplus V_m\oplus V_+\oplus V_-
\]
such that
\[
\OO_h\simeq G_1\times\cdots\times G_m\times\OO(V_+)\times\OO(V_-),
\]
where
\[
G_i=\begin{cases}\GL_{A_i}(X_i),&\text{if $V_i=X_i\oplus X_i^*$};\\ U_{A_i}(V_i),&\text{otherwise}.\end{cases}
\]
We let
\[
\OO'(V_+)=\begin{cases}\OO(V_+),&\text{if $\dim_FV_-$ is even}\\\SO(V_+),&\text{if $\dim_FV_-$ is odd},\end{cases}
\]
and
\[
\OO'(V_-)=\begin{cases}\SO(V_-),&\text{if $\dim_FV_-$ is even}\\\OO(V_-),&\text{if $\dim_FV_-$ is odd}.\end{cases}
\]
We then have
\[
\OO'(V)_h=P(\GPin(V)_g)\simeq G_1\times\cdots\times G_m\times\OO'(V_+)\times\OO'(V_-)
\]
by Lemma \ref{lemma:centralizer_GPin}.

Let us denote the image of $h$ under this isomorphism by
\[
(h_1,\dots,h_m, h_+, h_-).
\]
For $i=1,\dots, m$, let $\tau_i$ be our involution for $G_i$ as defined in \eqref{eq:tau_for_GL} and \eqref{eq:tau_for_classical}. Then
\[
\tau_i(h_i)=h_i
\]
because each of $h_i$'s is in the center of $G_i$ and the corresponding involution fixes the center pointwise. On $\OO'(V_{+})$, we define an involution $\tau_{+}$ as follows: If $\OO'(V_+)=\OO(V_+)$ then $\tau_+$ is as in \eqref{eq:tau_for_classical}. If $\OO'(V_+)=\SO(V_+)$ then $\tau_+$ is as in \eqref{eq:tau_for_special_orthogonal}. In either case, we have
\[
\tau_+(h_+)=h_+.
\]
We similarly define an involution $\tau_-$ on $\OO'(V_-)$.

For each $i=1,\dots,m$, we let $\beta_i$ be the corresponding element as in \eqref{eq:beta_for_GL} or \eqref{eq:beta_for_classical_group}, and $\Gt_i$ the group generated by $G_i$ and $\beta_i$ as before. For $\OO'(V_{\pm})$, we define $\OOpt(V_{\pm})$ accordingly as it is special orthogonal or not.

Recall that there exists
\begin{equation}\label{eq:conjugating_element_gamma}
\gamma=(\gamma_1,\dots,\gamma_m,\gamma_+,\gamma_-)\in\OO(V_1)\times\cdots\times\OO(V_m)\times\OO(V_+)\times\OO(V_-)
\end{equation}
such that $\gamma h^{-1}\gamma^{-1}=h$. (Note that $\gamma$ depends on our fixed $h$.) Hence
\[
\OOt(V)_h=\big\la a, \gamma\beta\st a\in\OO(V)_h\big\ra,
\]
which implies
\[
\OOpt(V)_h=P(\GPint(V)_g)=\big\la a, \gamma\beta\st a\in P(\GPin(V)_g)\big\ra.
\]

Here, $\gamma$ is not unique. For $i=1,\dots, m$, we choose $\gamma_i=\beta_i$ for $i=1,\dots,m$. As for the orthogonal factor, we know that $\gamma_+$ and $\gamma_-$ can be arbitrary. We choose $\gamma_{\pm}$ as follows; Fix an anisotropic vector $e_{\pm}\in V_{\pm}$ for each $\pm$ and set
\[
\gamma_{\pm}=\begin{cases}1 &\text{if $\OO'(V_{\pm})=\OO(V_{\pm})$};\\ r_{e_{\pm}}^{k_\pm}&\text{if $\OO'(V_{\pm})=\SO(V_{\pm})$},\end{cases}
\]
where we recall $k_{\pm}$ is such that $\dim_FV_{\pm}=2k_{\pm}$ or $2k_{\pm}-1$, and $r_{e_{\pm}}$ is the reflection in the hyperplane orthogonal to $e_{\pm}$.

We then have the natural injection
\begin{equation}\label{eq:OOt_g_inclusion}
P(\GPint(V)_g)\longrightarrow \Gt_1\times\cdots\times\Gt_m\times\OOpt(V_+)\times\OOpt(V_-)
\end{equation}
by sending
\[
\gamma\beta\mapsto (\gamma_1,\cdots,\gamma_m,\gamma_+\beta_+, \gamma_-\beta_-).
\]
Note that we have the natural commutative diagram
\[
\begin{tikzcd}
\Gt_1\times\cdots\times\Gt_m\times\OOpt(V_+)\times\OOpt(V_-)\ar[r]&\overbrace{\{\pm 1\}\times\cdots\times \{\pm 1\}}^{\text{$m+2$-times}}\\
P(\GPint(V)_g)\ar[r]\ar[u]&\{\pm 1\}\ar[u, "\Delta"']
\rlap{\hspace{1em} ,}
\end{tikzcd}
\]
where the top arrow maps the component $\Gt_i\smallsetminus G_i$ to $-1\in\{\pm 1\}$ in the $i$-factor (and similarly for $\OOpt(V_{\pm})$), the bottom arrow $\OOpt(V)_h\smallsetminus\OO'(V)_h$ to $-1$, and the right arrow is the diagonal embedding. We then have the natural isomorphism
\begin{align*}
&\Scal'(\OO'(V)_h\times V)^{\OOpt(V)_h, \chi}\\
&\simeq\Scal'\big((G_1\times V_1)\times\cdots\times (G_m\times V_m)\times(\OO'(V_+)\times V_+)\times(\OO'(V_-)\times V_-)\big)^{\OOpt(V)_h, \chi},
\end{align*}
where on the right-hand side the group $\OOpt(V)_h$ acts on the set $(G_1\times V_1)\times\cdots\times (G_m\times V_m)\times(\OO'(V_+)\times V_+)\times(\OO'(V_-)\times V_-)$
via the left vertical arrow in the above diagram. Recall that we are trying to show this space is zero.

In what follows, the orthogonal factor $\OO'(V_+)\times\OO'(V_-)$ does not play a special role anymore. Hence for notational convenience let us write
\[
G_{m+1}=\OO'(V_+)\qand G_{m+2}=\OO'(V_-),
\]
and similarly $V_{m+1}=V_+$, $V_{m+2}=V_-$, etc. Further, we reset our $m$ to be $m+2$, so that we simply have
\begin{equation}\label{eq:centralizer_of_h}
\OO'(V)_h\simeq G_1\times\cdots\times G_m.
\end{equation}

For each $i=1,\dots, m$, let
\[
\chi_i:\Gt_i\longrightarrow\{\pm 1\}
\]
be the sign character that sends $\gamma_i$ to $-1$. Also let $\epsilon_i\in\{0, 1\}$, so that $\chi_i^{\epsilon_i}$ is either trivial or $\chi_i$. For each $m$-tuple $(\epsilon_1,\dots,\epsilon_m)$, we set
\[
\Scal'\big((G_1\times V_1)\times\cdots\times (G_m\times V_m)\big)^{(\epsilon_1,\dots,\epsilon_m)}
\]
to be the space of distributions on which $\Gt_1\times\cdots\times \Gt_m$ acts by the character
\[
\chi_1^{\epsilon_1}\otimes\cdots\otimes\chi_m^{\epsilon_m}.
\]

Now assume
\[
T\in\Scal'\big((G_1\times V_1)\times\cdots\times (G_m\times V_m)\big)^{\OOpt(V)_h, \chi}
\]
is nonzero. Let
\[
\Scal'_T=(\Gt_1\times\cdots\times\Gt_m)T,
\]
namely the $(\Gt_1\times\cdots\times\Gt_m)$-module generated by $T$. We then have
\[
\Scal'_T\subseteq\bigoplus_{(\epsilon_1,\dots,\epsilon_m)} \Scal'\big((G_1\times V_1)\times\cdots\times (G_m\times V_m)\big)^{(\epsilon_1,\dots,\epsilon_m)}.
\]
Since $\OO'(V)_h$, viewed as a subgroup of $\Gt_1\times\cdots\times\Gt_m$, acts by the nontrivial character $\chi$ on $T$, there exists at least one $\epsilon_i\neq 0$ such that the projection
\[
\Scal'_T\longrightarrow  \Scal'\big((G_1\times V_1)\times\cdots\times (G_m\times V_m)\big)^{(\epsilon_1,\dots,\epsilon_m)}
\]
is nonzero.

We will show
\[
\Scal'\big((G_1\times V_1)\times\cdots\times (G_m\times V_m)\big)^{(\epsilon_1,\dots,\epsilon_m)}=0,
\]
which will be a contradiction and hence there is no nonzero $T$. First, by permuting the indices, we may assume $\epsilon_1\neq 0$. Let
\[
A\in \Scal'\big((G_1\times V_1)\times\cdots\times (G_m\times V_m)\big)^{(\epsilon_1,\dots,\epsilon_m)}
\]
be nonzero. Then there exists a simple tensor
\[
\varphi_1\otimes\cdots\otimes\varphi_m\in\Scal(G_1\times V_1)\otimes\cdots\otimes\Scal(G_m\times V_m)
\]
such that $A(\varphi_1\otimes\cdots\otimes\varphi_m)\neq 0$, which implies the composite
\begin{align*}
&\Scal(G_1\times V_1)\longrightarrow \Scal(G_1\times V_1)\otimes\cdots\otimes (G_m\times V_m)\xrightarrow{\;A\;}\C\\
&\qquad\varphi\qquad\mapsto\qquad \varphi\otimes\varphi_2\otimes\cdots\otimes\varphi_m
\end{align*}
is nonzero. But this composite is a distribution on $G_1\times V_1$ on which $\Gt_1$ acts via the character $\chi_1$, which implies
\[
\Scal'(G_1\times V_1)^{\Gt_1, \chi_1}\neq 0.
\]
This contradicts Proposition \ref{pro:AGRS_non-exitence_of_distribution_times_V}, the result obtained by \cite{AGRS} or by \cite{Wal12}. Hence $T=0$.

Thus we have shown $\Scal'(\OO'(V)_h\times V)^{\OOpt(V)_h, \chi}=0$, which completes the proof of our main theorem (Theorem \ref{thm:A}) for $\GPin$.

\quad

\section{GSpin case}

In this section, we prove our main theorem for $\GSpin(V)$. The proof is essentially the same as the GPin case but we need to make appropriate modifications. The most notable difference is that instead of the group $\GPint(V)$, we use an appropriate subgroup $\GSpint(V)$, which is the analogue of $\SOt(V)$. Let us first recall our basic set up. As before, $V$ is a quadratic space with
\[
\dim_FV=n=\begin{cases}2k\\ 2k-1.\end{cases}
\]
We fix an orthogonal basis $e_1,\dots,e_{n-1}, e_n$, and assume $W=\Span\{e_1,\dots,e_{n-1}\}$. We often write $e=e_n$. We set
\[
\zeta=e_1\cdots e_n.
\]

\subsection{The group $\GSpint(V)$}
We define the group $\GSpint(V)$, which plays the role of $\GPint(V)$. First recall that
\[
\GPint(V)=\big\la g,\, \beta\st g\in\GPin(V)\big\ra,
\]
namely the group generated by $g$'s and $\beta$ modulo the relations $g\beta=\beta g$ and $\beta^2=1$.

We then define
\[
\GSpint(V)=\big\la g,\, e^k\beta\st g\in\GSpin(V)\big\ra\subseteq\GPint(V),
\]
so that we have
\[
1\longrightarrow \GSpin(V)\longrightarrow\GSpint(V)\xrightarrow{\;\,\chi\;\,}\{\pm1\}\longrightarrow 1,
\]
where the surjection $\chi$ sends $e^k\beta$ to $-1$, and
\[
\GSpint(V)\simeq\GSpin(V)\times\{1, e^k\beta\},
\]
where $e^k\beta$ acts on $\GSpin(V)$ by conjugation viewed inside $\GPint(V)$. Since $\GSpint(V)$ is a subgroup of $\GPint(V)$, it acts on $\GSpin(V)\times V$ (viewed merely as a set) by restricting the action of $\GPint(V)$ as
\begin{gather}\label{eq:action_of_GSpint}
\begin{aligned}
g\cdot (h, v)&=(ghg^{-1}, P(g)v)\\
e^k\beta\cdot (h, v)&=(e^k\sigma_V(h)e^{-k}, -P(e)^kv),
\end{aligned}
\end{gather}
where $(h, v)\in\GSpin(V)\times V$.

We let $\GSpint(V)_e$ be the stabilizer of $e\in V$ under the action of $\GSpint(V)$ on $V$ as usual. Analogously to the $\SO(V)$ case, One can then show
\[
\GSpint(V)_e=\big\la g, e_{n-1}^{k-1}e\beta\st g\in\GSpin(W)\big\ra,
\]
and we define
\[
\GSpint(W):=\GSpint(V)_e.
\]
We have
\[
1\longrightarrow \GSpin(W)\longrightarrow\GSpint(W)\xrightarrow{\;\,\chi\;\,}\{\pm1\}\longrightarrow 1,
\]
where the surjection $\chi$ sends $e_{n-1}^{k-1}e\beta$ to $-1$, and
\[
\GSpint(W)\simeq\GSpin(W)\rtimes\{1, e_{n-1}^{k-1}e\beta\},
\]
where the action of $e_{n-1}^{k-1}e\beta$ is by conjugation viewed inside $\GPint(V)$.

We define an involution
\[
\tau_W:\GSpin(V)\longrightarrow\GSpin(V),\quad \tau_W(g)=(e_{n-1}^{k-1}e)\sigma_V(g)(e_{n-1}^{k-1}e)^{-1},
\]
for $g\in\GSpin(V)$. This is the action of $e_{n-1}^{k-1}e\beta\in\GSpint(V)_e$ on $\GSpin(V)$. Since $e$ commutes with all the elements in $\GSpin(W)$, we have $\tau_W(\GSpin(W))=\GSpin(W)$.

We have the canonical projection
\[
P:\GSpint(V)\longrightarrow \SOt(V),\quad g\mapsto P(g),\;e^k\beta\mapsto r_e^k\beta,
\]
which is nothing but the restriction of the canonical projection $P:\GPint(V)\to\OOt(V)$. We then have
\[
P(\GSpint(V)_e)=\SOt(V)_e.
\]

Let $g\in\GSpin(V)$ be semisimple, and set $h:=P(g)\in\SO(V)$. If
\[
\OO(V)_h\simeq G_1\times\cdots\times G_m\times\OO(V_+)\times\OO(V_-)
\]
as before, then
\[
\SO(V)_h\simeq G_1\times\cdots\times G_m\times S(\OO(V_+)\times\OO(V_-)),
\]
where
\[
S(\OO(V_+)\times\OO(V_-))=(\OO(V_+)\times\OO(V_-))\cap\SO(V_+\oplus V_-)
\]
by Proposition \ref{prop:centralizer_h_SO}. Note that we necessarily have $\dim_FV_-$ even, because $h\in\SO(V)$, and $h_+=1$ and $h_-=-1$. Apparently,
\[
P(\GSpin(V)_g)\subseteq\SO(V)_h.
\]
But this inclusion can be strict if there is an orthogonal factor $\OO(V_+)\times\OO(V_-)$. To be precise, we have the following.
\begin{lemma}\label{lemma:centralizer_GSpin}
Keeping the above notation, we have
\[
P(\GSpin(V)_g)\simeq G_1\times\cdots\times G_m\times \SO(V_+)\times\SO(V_-).
\]
\end{lemma}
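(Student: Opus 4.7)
The proof is essentially a direct consequence of Lemma \ref{lemma:centralizer_GPin} combined with the relation between $\GPin$ and $\GSpin$. The plan is to first observe the set-theoretic identity
\[
P(\GSpin(V)_g) = P(\GPin(V)_g)\cap\SO(V),
\]
and then apply Lemma \ref{lemma:centralizer_GPin} to compute the right-hand side explicitly.

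To justify the identity, note that $\GSpin(V)=P^{-1}(\SO(V))$ inside $\GPin(V)$, so
\[
\GSpin(V)_g=\GPin(V)_g\cap\GSpin(V).
\]
The inclusion $P(\GSpin(V)_g)\subseteq P(\GPin(V)_g)\cap\SO(V)$ is immediate. Conversely, if $y\in P(\GPin(V)_g)\cap\SO(V)$, write $y=P(x)$ for some $x\in\GPin(V)_g$; then $y\in\SO(V)$ forces $x\in P^{-1}(\SO(V))=\GSpin(V)$, so $x\in\GSpin(V)_g$ and $y\in P(\GSpin(V)_g)$. This establishes the identity.

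Next, since $g\in\GSpin(V)$ we have $h=P(g)\in\SO(V)$; from the description $h_+=1_{V_+}$, $h_-=-1_{V_-}$ and $\det h=1$, we deduce $\dim_FV_-$ is even, so we are in the first case of Lemma \ref{lemma:centralizer_GPin}, which gives
\[
P(\GPin(V)_g)\simeq G_1\times\cdots\times G_m\times\OO(V_+)\times\SO(V_-).
\]
Intersecting with $\SO(V)$ inside $\OO(V)_h$ amounts to imposing the condition that the total determinant on $V$ equal $1$. Recall from the excerpt that each $G_i$ is Zariski connected, hence $G_i\subseteq\SO(V_i)$, and the $\SO(V_-)$ factor already has determinant $1$ on $V_-$. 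Thus the total determinant on $V$ of $(g_1,\dots,g_m,g_+,g_-)$ is just $\det_{V_+}(g_+)$, and the intersection condition reduces to $g_+\in\SO(V_+)$. This yields
\[
P(\GSpin(V)_g)\simeq G_1\times\cdots\times G_m\times\SO(V_+)\times\SO(V_-),
\]
as required.

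Both steps are routine: the only point requiring any care is verifying the set-theoretic identity $P(\GSpin(V)_g)=P(\GPin(V)_g)\cap\SO(V)$, which uses only that $\ker P=Z^\circ\subseteq\GSpin(V)$ and that $\GSpin(V)=P^{-1}(\SO(V))$. There is no real obstacle; the whole argument is just bookkeeping once Lemma \ref{lemma:centralizer_GPin} is in hand.
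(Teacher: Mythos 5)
Your proof is correct and takes essentially the same approach as the paper, which also derives the result from Lemma \ref{lemma:centralizer_GPin} and the identity $\GSpin(V)_g=\GPin(V)_g\cap\GSpin(V)$. You have merely spelled out in more detail the step the paper leaves implicit — that $P(\GPin(V)_g\cap\GSpin(V))=P(\GPin(V)_g)\cap\SO(V)$ because $\ker P=Z^\circ\subseteq\GSpin(V)$, and that intersecting with $\SO(V)$ only cuts the $\OO(V_+)$ factor down to $\SO(V_+)$.
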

\begin{proof}
Since $\dim_FV_-$ is even, we know
\[
P(\GPin(V)_g)\simeq G_1\times\cdots\times G_m\times \OO(V_+)\times\SO(V_-)
\]
by Lemma \ref{lemma:centralizer_GPin}. Hence the lemma follows because $\GSpin(V)_g=\GPin(V)_g\cap\GSpin(V)$.
\end{proof}

\subsection{Vanishing of distribution}
Analogously to the GPin case, the main technical result to be proven is the following vanishing assertion of distributions:
\begin{equation}\label{eq:vanishing_distribution_GSpin}
\Scal'( \GSpin(V))^{\GSpint(W), \chi}=0,
\end{equation}
where $\GSpint(W)\simeq \GSpin(W)\times\{1, e_{n-1}^{k-1}e\beta\}$ acts on $\GPin(V)$ by restricting the actions \eqref{eq:action_of_GSpint}. In particular, the element $e_{n-1}^{k-1}e\beta$ acts via the involution $\tau_W$, which preserves $\GSpin(W)$ setwise.

Indeed, this implies the following, which is the analogue of Proposition \ref{prop:vanishing_implies_main_theorem}
\begin{prop}
The above vanishing assertion \eqref{eq:vanishing_distribution_GSpin} implies Theorem \ref{thm:A} (multiplicity-at-most-one theorem) for $ \GSpin$.
\end{prop}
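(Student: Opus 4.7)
The plan is to mirror the proof of Proposition \ref{prop:vanishing_implies_main_theorem}. Apply Lemma \ref{lemma:vanishing_involution_implies_at_most_one_first_lemma} with $G=\GSpin(V)$, $H=\GSpin(W)$, and the involution $\sigma=\tau_W$ induced by $e_{n-1}^{k-1}e\beta\in\GSpint(W)$: its hypotheses hold because $\tau_W(H)=H$ and every $H$-invariant distribution on $G$ is $\tau_W$-invariant by \eqref{eq:vanishing_distribution_GSpin}. This gives
\[
\dim_\C\Hom_H(\pi,\tau^\vee)\cdot\dim_\C\Hom_H(\pi^\vee,\tau)\le 1
\]
for all $\pi\in\Irr(G)$ and $\tau\in\Irr(H)$. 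As in the GPin case, it then suffices to establish the identity $\Hom_H(\pi,\tau^\vee)=\Hom_H(\pi^\vee,\tau)$, since the common dimension is then automatically at most one; we may also assume $\omega_\pi|_{Z^\circ}=\omega_\tau^{-1}|_{Z^\circ}$, as otherwise both Hom spaces vanish.

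To verify the identity I would invoke Theorem \ref{thm:contragredient_GPin}(b) for both $V$ and $W$ and split into cases by the parities involved. In the cases where neither $\pi^\vee$ nor $\tau^\vee$ carries a $\delta$-twist---namely $n=2k$ with $k$ even (so $\dim W=2k-1$), and $n=2k-1$ with $k$ odd (so $\dim W=2(k-1)$ with $k-1$ even)---we have $\pi^\vee\simeq\omega_\pi^{-1}\otimes\pi$ and $\tau^\vee\simeq\omega_\tau^{-1}\otimes\tau$, and the same character manipulation
\[
\Hom_H(\pi,\tau^\vee)=\Hom_H(\omega_\tau\otimes\pi,\tau)=\Hom_H(\omega_\pi^{-1}\otimes\pi,\tau)=\Hom_H(\pi^\vee,\tau)
\]
as in the GPin proof works verbatim, using $\omega_\tau=\omega_\pi^{-1}$ on $Z^\circ=F^\times$.

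The two remaining cases require absorbing a $\delta$-twist, and this is the only step where the argument genuinely differs from the GPin one; it is also the point I expect to require the most care. If $n=2k$ with $k$ odd, then $\pi^\vee\simeq\omega_\pi^{-1}\otimes\pi^\delta$ for any $\delta\in\GPin(V)\smallsetminus\GSpin(V)$, while $\tau^\vee\simeq\omega_\tau^{-1}\otimes\tau$; taking $\delta=e$ and applying Lemma \ref{lemma:commuting_elements_in_GPin} (note $e\perp W$ and $h\in\GSpin(W)$) gives $ehe^{-1}=h$ for all $h\in H$, hence $\pi^e|_H=\pi|_H$, and the two Hom spaces coincide. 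If $n=2k-1$ with $k$ even, then $\pi^\vee\simeq\omega_\pi^{-1}\otimes\pi$ while $\tau^\vee\simeq\omega_\tau^{-1}\otimes\tau^\delta$ for any $\delta\in\GPin(W)\smallsetminus\GSpin(W)$; the elementary identity $\Hom_H(\rho,\tau^\delta)=\Hom_H(\rho^{\delta^{-1}},\tau)$, obtained by conjugating intertwiners by $\tau(\delta)$, reduces the task to showing $\pi^{\delta^{-1}}|_H\simeq\pi|_H$ for some allowed $\delta$. Take $\delta^{-1}=e_{n-1}$: the element $e\cdot e_{n-1}$ lies in $\GSpin(V)=G$, so $\pi^{e\cdot e_{n-1}}\simeq\pi$ as $G$-representations, while a short Clifford computation using $e\cdot e_{n-1}=-e_{n-1}\cdot e$ together with $eh=he$ for $h\in H$ shows
\[
(e\cdot e_{n-1})\,h\,(e\cdot e_{n-1})^{-1}=e_{n-1}\,h\,e_{n-1}^{-1},
\]
so $\pi^{e_{n-1}}|_H\simeq\pi|_H$, completing the identification. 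The main conceptual step is this trick of lifting an outer $\GPin(W)$-twist on $H$ to an inner $\GSpin(V)$-twist on $G$ by multiplying with $e$, which uses crucially the orthogonality of $e$ and $W$; everything else is a direct transcription of the GPin argument.
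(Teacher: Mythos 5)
Your proposal is correct and follows essentially the same route as the paper: apply Lemma \ref{lemma:vanishing_involution_implies_at_most_one_first_lemma} with $G=\GSpin(V)$, $H=\GSpin(W)$, $\sigma=\tau_W$, and then deduce $\Hom_H(\pi,\tau^\vee)=\Hom_H(\pi^\vee,\tau)$ from Theorem \ref{thm:contragredient_GPin}(b) by absorbing the $\delta$-twist. The only differences are cosmetic: for $n=2k$ with $k$ odd the paper takes $\delta=e_1\cdots e_{n-1}$, central in $\GPin(W)$ (so $\tau^\delta=\tau$), whereas you take $\delta=e$ (so $\pi^\delta|_H=\pi|_H$), and your explicit treatment of $n=2k-1$ with $k$ even via the element $e\,e_{n-1}\in\GSpin(V)$ correctly fills in the case the paper leaves as ``similar''.
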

\begin{proof}
Set $G=\GSpin(V)$ and $H=\GSpin(W)$. The proof is essentially the same as Proposition \ref{prop:vanishing_implies_main_theorem}. First the vanishing assertion \eqref{eq:vanishing_distribution_GSpin} implies
\[
\dim_{\C}\Hom_H(\pi,\, \tau^\vee)\cdot\dim_{\C}\Hom_H(\pi^\vee,\, \tau)\leq 1.
\]
Hence it suffices to show
\[
\Hom_H(\pi,\, \tau^\vee)=\Hom_H(\pi^\vee,\, \tau).
\]
If $\omega_{\pi}|_{Z^\circ}\neq\omega_{\tau}^{-1}|_{Z^{\circ}}$ then both sides are zero and the equality trivially holds.

So we may assume $\omega_{\pi}|_{Z^\circ}=\omega_{\tau}^{-1}|_{Z^{\circ}}$. Assume $\dim_FV=2k$ and $\dim_FW=2k-1$. We know from Theorem \ref{thm:contragredient_GPin} that
\[
\pi^\vee=\omega_{\pi}^{-1}\otimes\pi\quad\text{or}\quad \omega_{\pi}^{-1}\otimes\pi^\delta,
\]
where $\delta$ is any element in $\GPin(V)\smallsetminus\GSpin(V)$. Also
\[
\tau^\vee=\omega_{\tau}^{-1}\otimes\tau.
\]
If $\pi^\vee=\omega_{\pi}^{-1}\otimes\pi$, the proof is essentially the same as the $\GPin$ case and left to the reader. (This case is actually simpler because there involves no $\sign_{\pi}$.) Assume $\pi^\vee=\omega_{\pi}^{-1}\otimes\pi^\delta$. Let us choose $\delta$ from the nonidentity component of the center of $\GPin(W)$, so that $\tau^\delta=\tau$. (For example, we may choose $\delta=e_1\cdots e_{n-1}$.) We then have
\begin{align*}
\Hom_{H}(\pi,\, \tau^\vee)
&=\Hom_H(\pi,\, \omega_{\tau}^{-1}\otimes\tau)\\
&=\Hom_H(\omega_{\tau}\otimes\pi,\, \tau)\\
&=\Hom_H(\omega_{\pi}^{-1}\otimes\pi,\, \tau)\\
&=\Hom_H(\omega_{\pi}^{-1}\otimes\pi^\delta,\, \tau^\delta)\\
&=\Hom_H(\pi^\vee,\, \tau).
\end{align*}
The case $\dim_FV=2k-1$ is similar.
\end{proof}

Recall the action of $\GSpint(V)$ on $\GSpin(V)\times V$ is defined in \eqref{eq:action_of_GSpint}.

\begin{prop}
We have a natural inclusion
\[
\Scal'(\GSpin(V))^{\GSpint(W), \chi}\subseteq \Scal'(\GSpin(V)\times V)^{\GSpint(V), \chi}.
\]
Hence if
\[
\Scal'(\GSpin(V)\times V)^{\GSpint(V), \chi}=0
\]
then $\Scal'(\GSpin(V))^{\GSpint(W), \chi}=0$.
\end{prop}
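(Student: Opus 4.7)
The plan is to follow the proof of Proposition \ref{prop:removing_W_from_vanishing_assertion} almost verbatim, replacing $\GPint$ by $\GSpint$ throughout. The key input is the Frobenius descent (Lemma \ref{lemma:Frobenius_descent}) applied to a $\GSpint(V)$-equivariant projection $\phi : X \to Y$, where $Y$ is the $\GSpint(V)$-orbit of $e$ in $V$ and the stabilizer of $e$ is $\GSpint(W)$ by construction.

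Concretely, I would set
\[
X = \{(g, v) \in \GSpin(V) \times V \st \la v, v\ra = \la e, e\ra\}, \qquad Y = \{v \in V \st \la v, v\ra = \la e, e\ra\},
\]
and observe that both are $\GSpint(V)$-invariant: under the action \eqref{eq:action_of_GSpint} the $V$-component is transformed by $P(g) \in \SO(V)$ or by $-P(e)^k$, both isometries. Since $X$ is closed in $\GSpin(V) \times V$, one immediately has
\[
\Scal'(X)^{\GSpint(V), \chi} \subseteq \Scal'(\GSpin(V) \times V)^{\GSpint(V), \chi}.
\]

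For Frobenius descent I need transitivity on $Y$. Since $\la e, e\ra \neq 0$ and $\dim_F V \geq 2$, Witt's theorem gives $\SO(V)$-transitivity on $Y$, and hence $\GSpin(V)$-transitivity via the surjection $P$; a fortiori $\GSpint(V)$ acts transitively on $Y$. The stabilizer $\GSpint(V)_e$ is by definition $\GSpint(W) \simeq \GSpin(W) \rtimes \{1, e_{n-1}^{k-1}e\beta\}$, which is unimodular since $\GSpin(W)$ is reductive and sits as an index-two normal subgroup. Applying Lemma \ref{lemma:Frobenius_descent} to $\phi : X \to Y$, $(g, v) \mapsto v$, then yields
\[
\Scal'(X)^{\GSpint(V), \chi} \simeq \Scal'(\phi^{-1}(e))^{\GSpint(V)_e, \chi} \simeq \Scal'(\GSpin(V))^{\GSpint(W), \chi}
\]
via the obvious identification $\phi^{-1}(e) = \GSpin(V) \times \{e\} \simeq \GSpin(V)$. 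Combined with the inclusion above, this is the statement to prove.

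The main (minor) obstacle is to verify that the extra generator $e_{n-1}^{k-1}e\beta$ of $\GSpint(W)$ truly lies in $\GSpint(V)$ and fixes $e$ under the action \eqref{eq:action_of_GSpint}. I would rewrite
\[
e_{n-1}^{k-1}e\beta = (e_{n-1}^{k-1} e^{1-k}) \cdot (e^k \beta),
\]
and check by a parity argument on $k$ (using that $\alpha$ of \eqref{eq:alpha_involution} preserves $C^+$ and negates $C^-$) that $e_{n-1}^{k-1} e^{1-k} \in \GSpin(V)$ in both parities, so the product is genuinely in $\GSpint(V)$. For the stabilization of $e$, one computes $(e^k \beta) \cdot e = (-1)^{k+1} e$ and $P(e_{n-1}^{k-1} e^{1-k})(e) = (-1)^{k+1} e$ (using $r_{e_{n-1}}(e) = e$), whose composition fixes $e$ regardless of the parity of $k$. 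Beyond these bookkeeping verifications, the argument is a direct transcription of the GPin case.
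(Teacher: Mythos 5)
Your proposal is correct and follows essentially the same route as the paper: the same sets $X$ and $Y$, the inclusion from $X$ being closed and $\GSpint(V)$-invariant, Witt's theorem for transitivity on $Y$, and Frobenius descent to the fiber over $e$ with stabilizer $\GSpint(V)_e=\GSpint(W)$. The extra bookkeeping you carry out (that $e_{n-1}^{k-1}e\beta$ lies in $\GSpint(V)$ and fixes $e$) is a correct verification of facts the paper states without proof and does not change the argument.
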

\begin{proof}
This can be proven in the same way as Proposition \ref{prop:removing_W_from_vanishing_assertion}. Namely let
\begin{align*}
X&:=\{(g,v)\in\GSpin(V)\times V\st \la v, v\ra=\la e, e\ra\}\\
Y&:=\{v\in V\st \la v, v\ra=\la e, e\ra\},
\end{align*}
and consider the projection
\[
\phi:X\longrightarrow Y.
\]
By Witt's theorem, $\GSpin(V)$ acts transitively on $Y$ and hence by the Frobenius descent we have
\[
\Scal'(X)^{\GSpint(V), \chi}\simeq \Scal'(\GSpin(V)\times\{e\})^{\GSpint(V)_e, \chi},
\]
where the left-hand side is a subspace of $\Scal'(\GSpin(V)\times V)^{\GSpint(V), \chi}$. But clearly
\[
\Scal'(\GSpin(V)\times\{e\})^{\GSpint(V)_e, \chi}\simeq \Scal'(\GSpint(V))^{\GSpint(W), \chi}.
\]
The proposition follows.
\end{proof}

\subsection{Reducing to classical group situation}
By the above proposition, it suffices to show
\[
\Scal'(\GSpin(V)\times V)^{\GSpint(V), \chi}=0.
\]
Arguing in the same way as the $\GPin$ case, this vanishing assertion reduces to
\[
\Scal'(g\,\Ucal_g\times V)^{\GSpint(V)_g, \chi}=0
\]
for all semisimple $g\in\GSpin(V)$. Since the canonical projection $P$ is bijective on $g\,\Ucal_g$, we have the natural isomorphism
\[
\Scal'(g\,\Ucal_g\times V)^{\GSpint(V)_g, \chi}\simeq\Scal'(P(g\,\Ucal_g)\times V)^{P(\GSpint(V)_g), \chi}.
\]
Since we have
\[
\Scal'(P(g\,\Ucal_g)\times V)^{P(\GSpint(V)_g), \chi}
\subseteq\Scal'(P(\GSpin(V)_g)\times V)^{P(\GSpint(V)_g), \chi},
\]
it suffices to show
\[
\Scal'(P(\GSpin(V)_g)\times V)^{P(\GSpint(V)_g), \chi}=0.
\]
We know $P(\GSpin(V)_g)$ is as in Lemma \ref{lemma:centralizer_GSpin} and $P(\GSpint(V)_g)$ is generated by $P(\GSpin(V)_g)$ and the element $\gamma\beta$, where $\gamma$ is as in \eqref{eq:conjugating_element_gamma}. Note that since the orthogonal factor of $P(\GSpin(V)_g)$ is $\SO(V_+)\times\SO(V_-)$, we always choose $\gamma_{\pm}=r_{e_{\pm}}^{k_{\pm}}$. Then the rest of the proof is the same as the GPin case. The proof is complete.

\quad

\appendix
\section{Centralizer of semisimple element}\label{Appendix_A}
In this appendix, we reproduce the proof of Proposition \ref{prop:centralizer_of_h}, which gives the explicit description of the centralizer $\OO(V)_h$ of a semisimple element $h\in\OO(V)$. Though this is well-known already from the 60's (\cite{SS70}), we reproduce the proof in detail because we have not been able to locate a proof in the literature to the precision we need. The beginning part of our proof is borrowed from \cite[p.79-82]{MVW}.

\quad

Let $p(x)\in F[x]$ be the minimum polynomial of $h$, and let
\[
A:=F[x]\slash (p(x)).
\]
Since $h$ is invertible, $p(x)$ has a nonzero constant term, which means $x$ is invertible in $A$. Hence we have the natural isomorphism
\[
A=F[x]\slash (p(x))\simeq F[x, x^{-1}]\slash (p(x)),
\]
where on the right-hand side by $(p(x))$ we actually mean the ideal $p(x)F[x, x^{-1}]$. On $F[x, x^{-1}]$ we have the involution defined by $x\mapsto x^{-1}$. Since $p(x)$ is a minimum polynomial of an element $h$ in the orthogonal group $\OO(V)$, one can see that $p(x^{-1})=a x^mp(x)$ for some $a\in F$, where $m$ is the degree of $p$. (To see this, consider the eigenvalues of $p(x)$ over the algebraic closure.) Namely the involution preserves the ideal $p(x)F[x, x^{-1}]$, which gives rise to the involution
\[
\sigma:A\longrightarrow A,\quad x\mapsto x^{-1}.
\]
We often use the exponential notation $f^\sigma$ instead of $\sigma(f)$ for $f\in A$.

We view the space $V$ as an $A$-module in the obvious way, namely $f\cdot v=f(h)v$ for $f\in A$ and $v\in V$. Then for each $f\in A$
\[
f^\sigma\cdot v=f(h^{-1})v
\]
and
\[
\la f\cdot v, v'\ra=\la v, f^\sigma\cdot v'\ra
\]
for $v, v\in V$.

Since $h$ is semisimple, we can write $p(x)=p_1(x)\cdots p_k(x)$, where $p_i(x)$'s are distinct irreducible polynomials, so that we have
\[
F[x]\slash (p(x))=F[x]\slash (p_1(x))\times\cdots\times F[x]\slash (p_k(x)),
\]
where each
\[
A_i:=F[x]\slash (p_i(x))
\]
is a field because $p_i(x)$ is irreducible. Let $V_i=\ker p_i(h)$. Then we can write
\[
V=V_1\oplus\cdots\oplus V_k.
\]
Since $hV_i=V_i$, we can view each $V_i$ as an $A_i$-module via $q(x)\cdot v_i=q(h)v_i$ for $q(x)\in A_i$, and hence as an $A$-module via the canonical surjection $A\to A_i$.

Since $(p(x)^\sigma)=(p(x))$ viewed in $F[x, x^{-1}]$, for each $i$ we have $(p_i(x)^{\sigma})=(p_{\sigma(i)}(x))$ for some $\sigma(i)\in\{1,\dots, k\}$. There are two possibilities: either $\sigma(i)=i$ or $\sigma(i)\neq i$. Assume $\sigma(i)=i$. In this case, $\sigma$ restricts to an involution on the field $A_i$. Then $V_i$ is orthogonal to all $V_j$ with $i\neq j$, because for each $a_i\in A_i$ we have $\la a_iv_i, v_j\ra=\la v_i, a_i^{\sigma}v_j\ra=0$ for all $v_i\in V_i$ and $v_j\in V_j$ with $i\neq j$. On the other hand, assume $\sigma(i)\neq i$. One can then similarly see that $V_i\oplus V_{\sigma(i)}$ is orthogonal to all the other $V_j$'s and both $V_i$ and $V_{\sigma(i)}$ are isotropic. Let us set
\[
B_i=\begin{cases} A_i\times A_{\sigma(i)},&\text{if $\sigma(i)\neq i$};\\ A_i,&\text{if $\sigma(i)=i$}.\end{cases}
\]

Let us first consider the case $\sigma(i)=i$, so that $B_i=A_i$ is a field with the involution $\sigma$.
\begin{lemma}\label{lemma:appendix_unitary}
Assume $B_i=A_i$. Then there is an $A_i$-Hermitian form
\[
\lla-,-\rra_i:V_i\times V_i\longrightarrow A_i
\]
with respect to $\sigma$, namely
\[
\lla av, v'\rra_i=a\lla v, v'\rra_i\qand \lla v, v'\rra_i^\sigma=\lla v', v\rra_i
\]
for all $v, v'\in V_i$ and $a\in A_i$, such that
\[
\la -, -\ra=\tr_{A_i/F}(\lla -, -\rra_i),
\]
where $\tr_{A_i/F}:A_i\to F$ is the trace form.
\end{lemma}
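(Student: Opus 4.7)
The plan is to define $\lla-,-\rra_i$ by dualizing the $F$-bilinear form $\la-,-\ra$ through the trace pairing of the separable extension $A_i/F$, and then to verify all the required properties directly from the construction.

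More concretely, since the characteristic is zero (equivalently, since $p_i(x)$ is separable as an irreducible factor of the minimal polynomial of the semisimple element $h$), the extension $A_i=F[x]/(p_i(x))$ is separable, so the trace pairing
\[
A_i\times A_i\longrightarrow F,\qquad (a,b)\mapsto\tr_{A_i/F}(ab),
\]
is non-degenerate. For each pair $v,v'\in V_i$, the map $A_i\to F$, $a\mapsto\la a\cdot v,v'\ra$ is $F$-linear, so by non-degeneracy there is a unique element $\lla v,v'\rra_i\in A_i$ satisfying
\[
\la a\cdot v,v'\ra=\tr_{A_i/F}\bigl(a\,\lla v,v'\rra_i\bigr)\quad\text{for all }a\in A_i.
\]
This defines the candidate Hermitian form; setting $a=1$ immediately yields the required identity $\la v,v'\ra=\tr_{A_i/F}(\lla v,v'\rra_i)$.

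To verify $A_i$-linearity in the first argument, for $c\in A_i$ I would compute $\la a(cv),v'\ra=\la (ac)v,v'\ra=\tr_{A_i/F}(ac\,\lla v,v'\rra_i)=\tr_{A_i/F}(a\cdot c\lla v,v'\rra_i)$, and uniqueness then forces $\lla cv,v'\rra_i=c\,\lla v,v'\rra_i$. For the Hermitian symmetry, I would combine the adjointness $\la f\cdot v,v'\ra=\la v,f^\sigma\cdot v'\ra$ (established just above the lemma) with the symmetry of $\la-,-\ra$ to get
\[
\tr_{A_i/F}\bigl(a\,\lla v',v\rra_i\bigr)=\la av',v\ra=\la v,av'\ra=\la a^\sigma v,v'\ra=\tr_{A_i/F}\bigl(a^\sigma\,\lla v,v'\rra_i\bigr);
\]
since $\sigma$ is an $F$-automorphism of $A_i$ it permutes the embeddings $A_i\hookrightarrow\bar F$ and so fixes the trace, whence the right side equals $\tr_{A_i/F}(a\,\lla v,v'\rra_i^\sigma)$, and uniqueness gives $\lla v',v\rra_i=\lla v,v'\rra_i^\sigma$. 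The $F$-bilinearity of $\lla-,-\rra_i$ is immediate from the definition and the $F$-bilinearity of $\la-,-\ra$.

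The main (and essentially only) obstacle is the Hermitian symmetry step, which rests on two clean algebraic facts already in hand: the trace of $A_i/F$ is invariant under any $F$-automorphism, and $\sigma$ (acting on the $A$-module structure of $V$) is the adjoint of $\la-,-\ra$. Once these are in place the proof amounts to assembling them. I would also remark in passing that $\lla-,-\rra_i$ is automatically non-degenerate, since $V_i$ is non-degenerate for $\la-,-\ra$ (being the orthogonal complement in $V$ of the sum of the other $V_j$'s, as shown just before the lemma), and the adjoint relation then transfers non-degeneracy across the trace.
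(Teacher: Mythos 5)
Your proposal is correct and follows the same route as the paper: define $\lla v,v'\rra_i$ via the nondegenerate trace pairing of the separable extension $A_i/F$, then verify $A_i$-linearity and Hermitian symmetry using the adjoint relation $\la f\cdot v,v'\ra=\la v,f^\sigma\cdot v'\ra$ and $\sigma$-invariance of the trace. The paper compresses the verifications to ``one can readily see''; your write-up simply supplies the details.
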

\begin{proof}
We suppress the subscript $i$ to ease the notation, so $A=A_i$, etc. It is elementary to show that any $F$-linear functional $\ell:A\to F$ is written as $\ell(a)=\tr_{A/F}(a\alpha)$ for some $\alpha\in A$. Now, for each fixed $v, v'\in V$ consider the $F$-linear functional $A\to F$ by $a\mapsto \la av, v'\ra$. Then there exists some $\lla v, v'\rra\in A$ such that
\[
\la av, v'\ra=\tr_{A/F}(a\lla v, v'\rra)
\]
for all $a\in A$. One can readily see that the assignment $\lla-,-\rra:V\times V\to A$ is a nondegenerate Hermitian form on $V$ over $A$ with respect to the involution $\sigma$.
\end{proof}

In the above lemma, it should be noted that if the involution $\sigma$ on $A_i$ is trivial then the polynomial $p_i(x)$ has to be either $p_i(x)=x-1$ or $p_i(x)=x+1$, in which case $A=F$ and the Hermitian form $\lla-,-\rra_i$ on $V_i$ is simply the restriction of our symmetric bilinear form $\la-,-\ra$. For $p_i(x)=x-1$ we set $V_+=V_i$ and $A_+=A_i$, and for $p_i(x)=x+1$ we set $V_-=V_i$ and $A_-=A_i$. (Of course $V_+$ or $V_-$ can be zero, depending on $h$.)

\quad

Next consider the case $\sigma(i)\neq i$. Let us set $j=\sigma(i)$, so that
\[
B_i=A_i\times A_j.
\]
We then have the field isomorphism
\[
\sigma:A_i=F[x]\slash(p_i(x))\xrightarrow{\;\sim\;} F[x]\slash(p_j(x))=A_j,\quad f(x)\mapsto f(x)^\sigma.
\]
Note that under this isomorphism we have $x\mapsto x^{-1}$. By identifying $A_j$ with $B_i$ under this isomorphism, we can write
\[
B_i=A_i\times A_i.
\]
Since the identification of $A_j$ with $A_i$ is made via $\sigma$, the involution $\sigma$ acts on $B_i=A_i\times A_i$ as switching the two factors.

Let $(h_i, h_j)\in A_i\times A_j$ be the image of $h$ in $B_i$. Since the isomorphism $A_i\to A_j$ maps $x$ to $x^{-1}$, under the identification $B_i=A_i\times A_i$ we have
\[
(h_i, h_j)=(h_i, h_i^{-1}).
\]
We often write
\[
h_i=(h_i, h_i^{-1})
\]
by slight abuse of notation. With this said, we have the following.
\begin{lemma}\label{lemma:appendix_GL}
Assume $B_i=A_i\times A_i$, so that $V_i$ and $V_{\sigma(i)}$ are $A_i$-vector spaces. Recall both $V_i$ and $V_{\sigma(i)}$ are totally isotropic such that the restriction of our symmetric form $\la-,-\ra$ on the sum $V_i\oplus V_{\sigma(i)}$ is nondegenerate. Then there exists a nondegenerate $A_i$-bilinear pairing
\[
\lla-,-\rra_i:V_i\times V_{\sigma(i)}\longrightarrow A_i
\]
such that
\[
\la-,-\ra=\tr_{A_i/F}(\lla-,-\rra_i).
\]
Via this bilinear pairing, we have the identification
\[
V_{\sigma(i)}=V_i^*=\Hom_F(V_i, F).
\]
\end{lemma}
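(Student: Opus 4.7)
The plan is to mimic the proof of Lemma \ref{lemma:appendix_unitary}, using trace duality on $A_i$, but to pay careful attention to the fact that the $A_i$-action on $V_{\sigma(i)}$ is defined through the isomorphism $\sigma : A_i \to A_{\sigma(i)}$. Just as in the Hermitian case, every $F$-linear functional on the étale algebra $A_i$ has the form $a \mapsto \tr_{A_i/F}(a\alpha)$ for a unique $\alpha \in A_i$. Hence for each fixed $v \in V_i$ and $v' \in V_{\sigma(i)}$ the map $a \mapsto \la a\cdot v, v'\ra$ determines a unique element $\lla v, v'\rra_i \in A_i$ such that
\[
\la a\cdot v, v'\ra = \tr_{A_i/F}\bigl(a\,\lla v,v'\rra_i\bigr), \qquad a\in A_i.
\]
Specializing to $a=1$ gives $\la v,v'\ra = \tr_{A_i/F}(\lla v,v'\rra_i)$ automatically.

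Next I would check $A_i$-bilinearity. Linearity in the first argument is immediate from the definition together with the uniqueness of $\alpha$. For the second argument the subtlety is that an element $a \in A_i$ acts on $v' \in V_{\sigma(i)}$ as $a^{\sigma}\in A_{\sigma(i)}$ acts in the natural $A_{\sigma(i)}$-module structure. Writing elements of $A$ as tuples $\hat a_i = (0,\dots,a,\dots,0)$ with $a$ in position $i$ and using $\la f\cdot v, v'\ra = \la v, f^\sigma\cdot v'\ra$, one computes
\[
\la b\cdot v,\, a * v'\ra = \la v,\, (ba)^{\sigma}\cdot v'\ra = \la (ba)\cdot v, v'\ra = \tr_{A_i/F}\bigl(b\cdot a\,\lla v,v'\rra_i\bigr),
\]
which by uniqueness yields $\lla v, a * v'\rra_i = a\,\lla v,v'\rra_i$.

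For nondegeneracy, suppose $\lla v,v'\rra_i = 0$ for all $v'\in V_{\sigma(i)}$. Then $\la a\cdot v,v'\ra = 0$ for all $a$ and $v'$; in particular $\la v, v'\ra = 0$ for all $v'\in V_{\sigma(i)}$. Since $V_i$ is orthogonal to every $V_j$ with $j\neq \sigma(i)$ and is itself totally isotropic (as $\sigma(i)\neq i$), we conclude $\la v, u\ra = 0$ for all $u\in V$, and nondegeneracy of $\la -,-\ra$ forces $v=0$. The same argument applies on the other side.

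Finally, the identification $V_{\sigma(i)} = V_i^* = \Hom_F(V_i,F)$ follows immediately from nondegeneracy of the $F$-bilinear pairing $(v,v')\mapsto \la v,v'\ra$ between the two totally isotropic complements inside the nondegenerate hyperbolic space $V_i\oplus V_{\sigma(i)}$, via $v'\mapsto \la -, v'\ra$. The only real point requiring care is the bookkeeping around the twisted $A_i$-action on $V_{\sigma(i)}$ when verifying sesquilinearity/bilinearity; everything else reduces to trace duality and Witt-type orthogonality, exactly paralleling the previous lemma.
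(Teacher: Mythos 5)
Your proof is correct and follows the same trace-duality argument as the paper, which itself just refers back to the proof of the Hermitian case (Lemma~\ref{lemma:appendix_unitary}). You give more detail than the paper does—explicitly tracking the twisted $A_i$-action on $V_{\sigma(i)}$ through $\sigma$ when checking bilinearity in the second slot, and spelling out the nondegeneracy argument—but the underlying construction is identical.
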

\begin{proof}
The proof is essentially the same as the other case. Again let us suppress the subscript $i$, and write $V_{\sigma(i)}=V_\sigma$. For each fixed $v\in V$ and $v'\in V_\sigma$, define the $F$-linear form on $A$ by
\[
a\mapsto \la av, v'\ra.
\]
Then there exists a unique element $\lla v,v'\rra\in A$ such that
\[
\la av, v'\ra=\tr_{A/F}(a\lla v,v'\rra).
\]
The assignment $\lla-,-\rra:V\times V_\sigma\to A$ is indeed a nondegenerate $A$-bilinear pairing.
\end{proof}

In the above case, let us write
\[
X_i=V_i\qand X_i^*=V_{\sigma(i)}.
\]
It should be noted that we have the natural isomorphism
\[
\Hom_{A_i}(X_i, A_i)\xrightarrow{\;\sim\;}\Hom_F(X_i, F),\quad \ell\mapsto \tr_{A_i/F}\circ\ell,
\]
of $F$-vector spaces. Hence the dual $X_i^*$ can be interpreted either over $F$ or over $A_i$.

Now, by re-choosing the indices we can write
\[
V=(X_1\oplus X_1^*)\oplus\cdots\oplus (X_\ell\oplus X_\ell^*)\oplus V_{\ell+1}\oplus\cdots\oplus V_m\oplus V_+\oplus V_-,
\]
and
\[
A=B_1\times\cdots \times B_\ell\times A_{\ell+1}\times\cdots\times A_m\times A_+\times A_-,
\]
where
\begin{enumerate}[(a)]
\item for $i=1,\dots, \ell$, we have $B_i=A_i\times A_i$, and $X_i$ is an $A_i$-vector space and $X_i^*$ its dual,
\item for $i=\ell+1,\dots, m$, we have that $A_i$ is a field and $V_i$ is equipped with a Hermitian form over $A_i$, and
\item $A_{\pm}=F$ and $V_{\pm}$ is a nondegenerate quadratic subspace of $V$.
\end{enumerate}

Our involution $\sigma$ on $A$ restricts an involution on each $B_i$, and we write
\[
\sigma=\sigma_1\otimes\cdots\otimes\sigma_m\otimes\sigma_+\otimes\sigma_-,
\]
where on $B_i=A_i\times A_i$ the involution $\sigma_i$ switches the two factors, on $B_i=A_i$ the involution $\sigma_i$ is of the second kind and on $A_{\pm}$ the involution $\sigma_{\pm}$ is trivial.

If we view our $h$ as an element of $A$, we can write
\[
h=(h_1,\cdots,h_m, h_+, h_-),
\]
where $h_i\in B_i$ and $h_{\pm}=1_{V_{\pm}}$. Recall by our convention that if $B_i=A_i\times A_i$ then
\[
h_i=(h_i, h_i^{-1}).
\]
Then
\begin{align*}
\sigma(h)=h^{-1}&=(h_1^{-1},\cdots, h_m^{-1}, h_+, h_-)\\
&=(\sigma_1(h_1),\cdots,\sigma_m(h_m),\sigma_+(h_+), \sigma_-(h_-)),
\end{align*}
where if $B_i=A_i\times A_i$ then $\sigma_i(h_i)$ is actually
\[
\sigma_i(h_i, h_i^{-1})=(h_i^{-1}, h_i),
\]
because $\sigma_i$ switches the two factors of $A_i\times A_i$. Also note that if $B_i=A_i\neq A_{\pm}$ then $\sigma_i$ is a Galois conjugation, and hence $h_i\in A_i$ is such that
\[
\sigma_i(h_i)=h_i^{-1},
\]
namely $h_i$ is a norm one element in $A_i$.

We then have the following.
\begin{prop}
The centralizer $\OO(V)_h$ is of the form
\[
\OO(V)_h\simeq G_1\times\cdots\times G_m\times\OO(V_+)\times\OO(V_-),
\]
where
\[
G_i=\begin{cases}\GL_{A_i}(X_i), &\text{if $B_i=A_i\times A_i$};\\
U_{A_i}(V_i),&\text{if $B_i=A_i$}.\end{cases}
\]
Here by $\GL_{A_i}(V_i)$ we actually mean the ``diagonal"
\[
\GL_{A_i}(X_i)\simeq\{(g_i, {g_i^*}^{-1})\st g_i\in\GL_{A_i}(X_i)\}\subseteq \GL_{A_i}(X_i)\times \GL_{A_i}(X_i^*),
\]
where $g_i^*$ is the adjoint of $g_i$ with resect to the canonical pairing $X_i\times X_i^*\to A_i$, and by $U_{A_i}(V_i)$ we mean the unitary group for the Hermitian space $V_i$ over $A_i$.

Further, if $B_i=A_i\times A_i$ then each $h_i=(h_i, h_i^{-1})$ is viewed as the central element $h_iI_{X_i}$ of $\GL_{A_i}(X_i)$, and if $B_i=A_i$ (including $A_{\pm}$) then each $h_i$ is the central element $h_iI_{V_i}$ of $U_{A_i}(V_i)$.
\end{prop}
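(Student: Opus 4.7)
\medskip

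\noindent\textbf{Proof plan.} The plan is to assemble the claimed isomorphism from two largely independent steps: (i) decompose any element of $\OO(V)_h$ compatibly with the given orthogonal decomposition of $V$, and (ii) identify the centralizer on each individual summand. For (i), I would first observe that each of the summands
\[
X_i\oplus X_i^*,\quad V_i\ (\ell+1\le i\le m),\quad V_+,\quad V_-
\]
is a kernel of a polynomial in $h$: explicitly, $V_\pm=\ker(h\mp 1)$, the Hermitian piece $V_i$ equals $\ker p_i(h)$, and the split piece $X_i\oplus X_i^*$ equals $\ker\bigl(p_i(h)p_{\sigma(i)}(h)\bigr)$. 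Since any $g\in\OO(V)$ commuting with $h$ automatically preserves such kernels, and the summands are mutually orthogonal with respect to $\la-,-\ra$, the restriction map
\[
\OO(V)_h\xrightarrow{\;\sim\;}\OO(X_1\oplus X_1^*)_{h_1}\times\cdots\times\OO(V_m)_{h_m}\times\OO(V_+)\times\OO(V_-)
\]
is an isomorphism.

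For step (ii) I would treat the three kinds of factors separately. The orthogonal factors $V_\pm$ are trivial because $h_\pm=\pm 1$ is central in $\OO(V_\pm)$, yielding $\OO(V_\pm)_{h_\pm}=\OO(V_\pm)$. For a Hermitian factor $V_i$ with $\sigma(i)=i$, since $p_i(x)$ is the minimal polynomial of $h_i$ acting on $V_i$ and $A_i=F[x]/(p_i(x))$, we have $F[h_i|_{V_i}]=A_i$; thus $g\in\OO(V_i)$ commutes with $h_i$ if and only if $g$ is $A_i$-linear. One then needs to translate the $F$-bilinear isometry condition $\la gv,gv'\ra=\la v,v'\ra$ into the Hermitian isometry condition $\lla gv,gv'\rra_i=\lla v,v'\rra_i$; this is where Lemma \ref{lemma:appendix_unitary} enters. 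Replacing $v'$ by $av'$ for $a\in A_i$, using $A_i$-sesquilinearity of $\lla-,-\rra_i$, and invoking the non-degeneracy of $\tr_{A_i/F}$, one deduces the Hermitian isometry condition, so the centralizer is $U_{A_i}(V_i)$.

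For the split factor $X_i\oplus X_i^*$ with $\sigma(i)\neq i$, the element $h_i$ acts as $(h_i,h_i^{-1})$ on $X_i\oplus X_i^*$, and the two scalars $h_i$ and $h_i^{-1}$ have disjoint $A_i$-spectra (since $p_i\ne p_{\sigma(i)}$), so any $g$ commuting with $h_i$ preserves each of $X_i$ and $X_i^*$. Writing $g=(g_1,g_2)$, the same argument as above (using $A_i=F[h_i]$) shows $g_1\in\GL_{A_i}(X_i)$ and $g_2\in\GL_{A_i}(X_i^*)$. The condition $\la g(v+v^*),g(w+w^*)\ra=\la v+v^*,w+w^*\ra$ unfolds, via the pairing $\lla-,-\rra_i$ of Lemma \ref{lemma:appendix_GL} and the same trace-form trick, into $\lla g_1v,g_2v^*\rra_i=\lla v,v^*\rra_i$, i.e.\ $g_2=(g_1^*)^{-1}$ with respect to the canonical $A_i$-pairing. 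This yields the diagonal embedding $\GL_{A_i}(X_i)\hookrightarrow\GL_{A_i}(X_i)\times\GL_{A_i}(X_i^*)$.

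\medskip

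\noindent\textbf{Main obstacle.} The only non-formal point is the ``trace-form trick'' used twice above: passing from the $F$-bilinear isometry condition to the $A_i$-linear Hermitian/bilinear isometry condition. Once one has established $A_i$-linearity of $g$, this passage requires probing the equation $\tr_{A_i/F}\bigl(\lla gv,gv'\rra_i-\lla v,v'\rra_i\bigr)=0$ against the full $A_i$-module $A_iv'$ and then invoking non-degeneracy of $\tr_{A_i/F}$; this is the essential analytic content of the proposition, but is routine. Everything else is organizational bookkeeping around the $A$-module decomposition of $V$ induced by the factorization $p(x)=p_1(x)\cdots p_k(x)$.
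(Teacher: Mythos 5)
Your proposal is correct and follows essentially the same route as the paper's Appendix~\ref{Appendix_A} proof: observe that each summand is a kernel of a polynomial in $h$ (hence preserved by anything in $\OO(V)_h$), reduce to centralizers on each summand, and then use the trace form together with Lemmas \ref{lemma:appendix_unitary} and \ref{lemma:appendix_GL} to convert the $F$-isometry condition into the $A_i$-linear Hermitian or duality condition. The paper keeps the ``restriction map isomorphism'' implicit while you spell it out, but the mathematical content is identical.
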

\begin{proof}
Let $g\in\OO(V)_h$. Assume $B_i=A_i\times A_i$. Since $V_i=\ker p_i(h)$, one can readily see that $g$ preserves each of the spaces $X_i$ and $X_i^*$. Let $g_i$ be the restriction of $g$ on $X_i$ and $g_i'$ that on $X_i^*$. Note that at this point, $g_i$ and $g_i'$ are only $F$-linear.

Then $(g_i, g_i')\in \GL_F(X_i)\times\GL_F(X_i^*)$ commutes with $h$ if and only if $g_i$ and $g_i'$ are $A_i$-linear because $A_i$ is the field $F[x]\slash (p_i(x))$ which acts via the evaluation at $x=h$. Further $(g_i, g_i')$ preserves the original form $\la-,-\ra$ if and only if
\[
\lla g_i v_i, g_i'v_i\rra_i=\lla v_i, v_i'\rra
\]
for all $v_i\in V_i$ and $v_i^*\in V_i^*$, where $\lla-,-\rra_i$ is the canonical pairing. Hence we must have $g_i'={g_i^*}^{-1}$, where $g_i^*$ is the adjoint of $g_i$ with respect to $\lla-,-\rra_i$. This shows that the set of all $(g_i, g_i')$ commuting with $h$ is of the form
\[
\{(g_i, {g_i^*}^{-1})\st g_i\in\GL_{A_i}(X_i)\},
\]
which is isomorphic to $\GL_{A_i}(X_i)$.

Assume $B_i=A_i$ (including $A_{\pm}$). Then one can see that $g$ preserves the space $V_i=\ker p_i(h)$. Let $g_i\in \GL_F(V_i)$ be the restriction of $g$ to $V_i$. Then $g_i$ commutes with $h$ if and only if $g_i$ is $A_i$-linear. Also $g_i$ preserves the original form $\la-,-\ra$ if and only if it preserves the form $\lla-,-\rra_i$. This shows that $g_i\in U_{A_i}(V_i)$.
\end{proof}

One can see that this is precisely Proposition \ref{prop:centralizer_of_h}.

\quad

Let us mention that if $B_i=A_i\times A_i$ then by Lemma \ref{lemma:appendix_GL} we know that $\GL_{A_i}(X_i)$ is in the Siegel Levi of the special orthogonal group $\SO(X_i\oplus X_i^*)$, and in particular
\[
\GL_{A_i}(X_i)\subseteq \SO(X_i\oplus X_i^*).
\]
If $B_i=A_i$ but not equal to $A_{\pm}$, then by Lemma \ref{lemma:appendix_unitary} we have
\[
U_{A_i}(V_i)\subseteq\SO(V_i).
\]
Note that $U_{A_i}(V_i)$ is in the special orthogonal group $\SO(V_i)$ instead of just the orthogonal group $\OO(V_i)$ because the unitary group $U_{A_i}(V_i)$ is connected.

\quad

Finally, let us mention the $\SO(V)$-analogue of the above proposition, whose proof is left to the reader.
\begin{prop}\label{prop:centralizer_h_SO}
Keep the above notation. Let $h\in\SO(V)$ be semisimple. The centralizer $\SO(V)_h$ is of the form
\[
\SO(V)_h\simeq G_1\times\cdots\times G_m\times S\big(\OO(V_+)\times\OO(V_-)\big),
\]
where
\[
G_i=\begin{cases}\GL_{A_i}(X_i), &\text{if $B_i=A_i\times A_i$};\\
U_{A_i}(V_i),&\text{if $B_i=A_i$},\end{cases}
\]
and
\[
S\big(\OO(V_+)\times\OO(V_-)\big)=\big(\OO(V_+)\times\OO(V_-)\big)\cap\SO(V_+\oplus V_-),
\]
and further $\dim_FV_-$ is always even.
\end{prop}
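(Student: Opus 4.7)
The plan is to deduce Proposition \ref{prop:centralizer_h_SO} directly from Proposition \ref{prop:centralizer_of_h} by intersecting with $\SO(V)$ and tracking how the determinant-one condition cuts out the subgroup. Specifically, since $\SO(V)_h = \OO(V)_h \cap \SO(V)$, I would start with the isomorphism
\[
\OO(V)_h \simeq G_1 \times \cdots \times G_m \times \OO(V_+) \times \OO(V_-)
\]
from Proposition \ref{prop:centralizer_of_h} and determine which elements of the right-hand side land in $\SO(V)$.

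Next I would invoke the two observations recorded in the text immediately following Proposition \ref{prop:centralizer_of_h}: if $B_i = A_i \times A_i$, then the embedding $\GL_{A_i}(X_i) \hookrightarrow \OO(X_i \oplus X_i^*)$ via $g \mapsto (g, {g^*}^{-1})$ factors through the Siegel Levi of $\SO(X_i \oplus X_i^*)$, and if $B_i = A_i$ is a field with involution of the second kind, then $U_{A_i}(V_i)$ is Zariski connected and hence contained in $\SO(V_i)$. Consequently each $G_i \subseteq \SO(V_i)$ for $i = 1, \ldots, m$, so the determinant-one condition does not cut down any of these factors; it only restricts the orthogonal component, yielding
\[
\big(\OO(V_+) \times \OO(V_-)\big) \cap \SO(V_+ \oplus V_-) = S\big(\OO(V_+)\times\OO(V_-)\big).
\]
This gives the claimed description of $\SO(V)_h$.

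Finally, to prove that $\dim_F V_-$ is always even, I would compute $\det_F(h)$ factor-by-factor. On each $V_i$ with $B_i = A_i \times A_i$, the element $h$ acts as $(h_i, h_i^{-1})$ on $X_i \oplus X_i^*$, so
\[
{\det}_F(h|_{V_i}) = N_{A_i/F}(h_i)^{\dim_{A_i} X_i} \cdot N_{A_i/F}(h_i^{-1})^{\dim_{A_i} X_i} = 1.
\]
On each $V_i$ with $B_i = A_i$ a quadratic extension of its fixed field $A_i'$, the element $h_i$ satisfies $\sigma_i(h_i) = h_i^{-1}$, i.e. $N_{A_i/A_i'}(h_i) = 1$, and hence
\[
{\det}_F(h|_{V_i}) = N_{A_i/F}(h_i)^{\dim_{A_i} V_i} = N_{A_i'/F}\!\big(N_{A_i/A_i'}(h_i)\big)^{\dim_{A_i} V_i} = 1.
\]
Also $\det_F(h|_{V_+}) = 1$ and $\det_F(h|_{V_-}) = (-1)^{\dim_F V_-}$. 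Since $h \in \SO(V)$, the product over all factors must equal $1$, forcing $(-1)^{\dim_F V_-} = 1$ and hence $\dim_F V_-$ even.

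The argument is essentially a bookkeeping exercise once Proposition \ref{prop:centralizer_of_h} is in hand; the only mild subtlety is the norm-one verification in the unitary factors, which reduces to the transitivity of the norm through the intermediate field $A_i'$. No serious obstacle is expected.
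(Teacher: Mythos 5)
Your proof is correct. The paper itself leaves this proposition as an exercise (``whose proof is left to the reader''), so there is no official proof to compare against, but your argument is clearly the intended one: the remarks immediately following the proof of Proposition~\ref{prop:centralizer_of_h} record exactly the facts $\GL_{A_i}(X_i) \subseteq \SO(X_i \oplus X_i^*)$ and $U_{A_i}(V_i) \subseteq \SO(V_i)$ that you invoke to show the determinant-one condition only restricts the orthogonal factor. Your factor-by-factor determinant computation for the parity of $\dim_F V_-$ is also correct; in particular the tower $N_{A_i/F} = N_{A_i'/F} \circ N_{A_i/A_i'}$ together with $N_{A_i/A_i'}(h_i) = 1$ correctly kills the unitary contributions, and for the $\GL$ factors the contributions from $X_i$ and $X_i^*$ cancel since $\dim_{A_i} X_i = \dim_{A_i} X_i^*$. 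The conclusion $\det_F(h) = (-1)^{\dim_F V_-} = 1$ forces $\dim_F V_-$ even. No gaps.
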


\section{Summary of involutions}\label{Appendix_B}
In this appendix, we summarize the involutions we use in this paper.\\

\noindent{\bf Canonical involution $g^*$}: For $g\in\GPin(V)$, the canonical involution $g^*$ is defined by reversing the order of the vectors that appear in $g$ viewed in the Clifford algebra $C(V)$; namely if we write $g=v_1v_2\cdots v_\ell$, where $v_i\in V$, then
\[
g^*=(v_1v_2\cdots v_\ell)^*=v_\ell v_{\ell-1}\cdots v_1.
\]

\quad

\noindent{\bf Clifford involution $\overline{g}$}: For $g\in\GPin(V)$, the Clifford involution $\overline{g}$ is defined as the ``signed canonical involution"; namely for $g=v_1v_2\cdots v_\ell$,
\[
\overline{g}=(-1)^\ell(v_1v_2\cdots v_\ell)^*=(-1)^\ell v_\ell v_{\ell-1}\cdots v_1.
\]
In other words
\[
\overline{g}=\sign(g)g^*,
\]
where $\sign:\GPin(V)\to\{\pm 1\}$ is the sign map that sends the nonidentity component to $-1$. Note that the Clifford norm $N:\GPin(V)\to F^\times$ is defined by $N(g)=g\,\overline{g}$ so that
\[
g^{-1}=\frac{1}{N(g)}\overline{g}.
\]

\quad

\noindent{\bf Involution $\sigma_V$}: The involution $\sigma_V$ is the involution on $\GPin(V)$ defined by
\[
\sigma_V(g)=\begin{cases}g^*&\text{if $n=2k$};\\
\sign(g)^{k+1}g^*&\text{if $n=2k-1$}.\end{cases}
\]
The important property of $\sigma_V$ is that it preserves the semisimple conjugacy classes of $\GPin(V)$. This, in particular, implies $\pi^\vee\simeq\pi^\sigma$ for all $\pi\in\Irr(\GPin(V))$, where $\pi^{\sigma}(g):=\pi(\sigma_V(g)^{-1})$. Also, this property allows us to reduce the vanishing of invariant distributions to semisimple orbits by using Bernstein's localization principle.
\quad

\begin{rem}
All the three involutions (canonical, Clifford and $\sigma_V$) are equal on $\GSpin(V)$.
\end{rem}

\quad

\noindent{\bf Involution $e^{k}\sigma_V(g)e^{-k}$}: The involution $g\mapsto e^k\sigma_V(g) e^{-k}$ on $\GPin(V)$ is also defined. This involution preserves the semisimple conjugacy classes of $\GSpin(V)$, and hence plays the same role as $\sigma_V$ of the $\GPin(V)$ case.

\quad

\noindent{\bf Group $\GPint(V)$ and Involution $\tau_W$}: The group $\GPint(V)$ is defined as
\[
\GPint(V)=\big\la g,\,\beta\st g\in\GPin(V)\big\ra
\]
with the relations $g\beta=\beta g$ and $\beta^2=1$, namely
\[
\GPint(V)=\GPin(V)\times\{1,\beta\}.
\]
The action of $\GPint(V)$ on the set $\GPin(V)\times V$ is defined as in \eqref{eq:action_on_V_GPin}. In particular, $\beta$ acts on $\GPin(V)$ via the involution $\sigma_V$.

Assume $V=W\oplus Fe$, where $e$ is anisotropic. We set
\[
\GPint(W):=\GPint(V)_e=\big\la g, e\beta\st g\in \GPin(W)\big\ra,
\]
so that
\[
\GPint(W)\simeq\GPin(W)\rtimes\{1, e\beta\}.
\]
The involution $\tau_W$ on $\GPin(V)$ is defined by $\tau_W(g)=e\sigma_V(g) e^{-1}$, and the element $e\beta$ acts on $\GPin(V)$ via this involution.

Since $\tau_W(\GPin(W))=\GPin(W)$, the involution $\tau_W$ acts on the space
\[
\Scal'(\GPin(V))^{\GPin(W)}
\]
of the $\GPin(V)$ invariant distributions. We showed that the $-1$-eigenspace of the involution $\tau_W$ vanishes, which is equivalent to the assertion
\[
\Scal'(\GPin(V))^{\GPint(W),\chi}=0.
\]
However, we reduce this vanishing assertion to
\[
\Scal'(\GPin(V)\times V)^{\GPint(V),\chi}=0,
\]
where the space $W$ no longer appears. Hence the involution $\tau_W$ does not play any direct role in our proof.

\quad

\noindent{\bf Group $\GSpint(V)$ and Involution $\tau_W$}: The group $\GSpint(V)$ is defined as
\[
\GSpint(V)=\big\la g,\,e^k\beta\st g\in\GPin(V)\big\ra\subseteq\GPint(V),
\]
so that
\[
\GSpint(V)\simeq\GSpin(V)\rtimes\{1, e^k\beta\}.
\]
The action of $\GSpint(V)$ on the set $\GSpin(V)\times V$ is simply the restriction of the action of $\GPint(V)$ as in \eqref{eq:action_of_GSpint}. In particular, $e^k\beta$ acts on $\GSpin(V)$ via the involution $g\mapsto e^k\sigma_V(g)e^{-k}$, which preserves the semisimple conjugacy classes of $\GSpin(V)$.

Assume $V=W\oplus Fe$, where $e$ is anisotropic and fix an orthogonal basis $\{e_1,\dots, e_{n-1}\}$ of $W$. We set
\[
\GSpint(W)\simeq\GSpint(V)_e=\big\la g, e_{n-1}^{k-1}e\beta\st g\in \GSpin(W)\big\ra,
\]
so that
\[
\GSpint(W)=\GSpin(W)\rtimes\{1, e_{n-1}^{k-1}e\beta\}.
\]
The involution $\tau_W$ on $\GSpin(V)$ is defined by
\[
\tau_W(g)=(e_{n-1}^{k-1}e)\sigma_V(g) (e_{n-1}^{k-1}e)^{-1},
\]
and the element $e_{n-1}^{k-1}e\beta$ acts on $\GPin(V)$ via this involution. This involution plays the same role as the $\tau_W$ of the $\GPin$ case.

Our main theorem can be shown by showing the vanishing assertion
\[
\Scal'(\GSpin(V))^{\GSpint(W),\chi}=0
\]
just as the $\GPin$ case.

\quad

\noindent{\bf Involution $\tau_V$ on classical groups}: Let $G(V)=\GL(V), \UU(V), \OO(V)$ or $\SO(V)$. The involution $\tau_V$ on $G(V)$ is defined as follows:
\[
\tau_V(g)=
\begin{cases}
g^t&\text{for $\GL(V)$};\\
\beta g^{-1}\beta &\text{for $\UU(V)$};\\
g^{-1}&\text{for $\OO(V)$};\\
r_e^{k}g^{-1}r_e^{-k}&\text{for $\SO(V)$},
\end{cases}
\]
where $\beta:V\to V$ for $\UU(V)$ is Galois conjugation and $r_e\in\OO(V)$ is the reflection in the hyperplane orthogonal to $e$. This involution preserves the (semisimple) conjugacy classes of $G(V)$, and hence plays the same role as our $\sigma_V$ for the $\GPin(V)$ case.

The groups $\Gt(V)$ and $\Gt(W)$ and the involution $\tau_W$ are defined similarly to the $\GPin(V)$ case.

\begin{rem}
It should be pointed out here that one can show $\tau_V(g)$ and $g$ are conjugate in $G(V)$ not just for semisimple $g\in G(V)$ but for all $g\in G(V)$. (See \cite[I.2. Proposition, p.79]{MVW}.) This allows one to prove the assertion on contragredient without using Harish-Chandra's regularity theorem. Thus the existence of MVW-involution can be shown for $G(V)$ even when the characteristic of $F$ is not zero.
\end{rem}

\bibliographystyle{alpha}
\bibliography{MultOne}

\end{document}